\newcommand{\subjclass}[2][1991]{%
  \let\@oldtitle\@title%
  \gdef\@title{\@oldtitle\footnotetext{#1 \emph{Mathematics subject classification.} #2}}%
}
\newcommand{\keywords}[1]{%
  \let\@@oldtitle\@title%
  \gdef\@title{\@@oldtitle\footnotetext{\emph{Key words and phrases.} #1.}}%
}
\theoremstyle{plain}
\newtheorem{thm}{Theorem}[section]
\newtheorem{lem}[thm]{Lemma}
\newtheorem{cor}[thm]{Corollary}
\newtheorem{prop}[thm]{Proposition}
\theoremstyle{definition}
\newtheorem{df}[thm]{Definition}
\newtheorem{eg}[thm]{Example}
\theoremstyle{remark}
\newtheorem{rem}[thm]{Remark}
\newcommand{\remove}[1]{}
\newcommand{\qad}{\phantom{={}}}
\newcommand{\relmiddle}[1]{\mathrel{}\middle#1\mathrel{}}
\numberwithin{equation}{section}
\newcommand{\E}{\mathbb{E}}
\newcommand{\K}{\mathbb{K}}
\newcommand{\N}{\mathbb{N}}
\renewcommand{\P}{\mathbb{P}}
\newcommand{\R}{\mathbb{R}}
\newcommand{\X}{\mathbb{X}}
\newcommand{\Z}{\mathbb{Z}}
\newcommand{\1}{\mbox{\rm1}\hspace{-0.25em}\mbox{\rm l}}
\newcommand{\cK}{\mathcal{K}}
\newcommand{\cL}{\mathcal{L}}
\newcommand{\cN}{\mathcal{N}}
\newcommand{\cW}{\mathcal{W}}
\newcommand{\bfb}{\mathbf{b}}
\newcommand{\bfB}{\mathbf{B}}
\renewcommand{\a}{\alpha}
\renewcommand{\b}{\beta}
\newcommand{\gm}{\gamma}
\newcommand{\dl}{\delta}
\newcommand{\Dl}{\Delta}
\newcommand{\eps}{\varepsilon}
\newcommand{\lm}{\lambda}
\newcommand{\sg}{\sigma}
\newcommand{\Om}{\Omega}
\newcommand{\ER}{Erd\H{o}s--R\'{e}nyi }
\DeclareMathOperator{\ber}{Bernoulli}
\DeclareMathOperator{\bin}{Bin}
\DeclareMathOperator{\cov}{Cov}
\DeclareMathOperator{\var}{Var}
\DeclareMathOperator{\supp}{supp}
\newcommand{\dy}[1]{\textcolor{magenta}{#1}}
\newcommand{\tNN}{\text{NN}}
\title{
Normal approximation for statistics of randomly weighted complexes
}
\author{
Shu \textsc{Kanazawa}\thanks{Kyoto University Institute for Advanced Study, Kyoto University, Japan.
\texttt{kanazawa.shu.2w@kyoto-u.ac.jp}}
\and
Khanh Duy \textsc{Trinh}\thanks{Global Center for Science and Engineering, Waseda University, Japan. \texttt{trinh@aoni.waseda.jp}}
\and
D. \textsc{Yogeshwaran}\thanks{Theoretical Statistics and Mathematics Unit, Indian Statistical Institute, Bangalore.
\texttt{d.yogesh@isibang.ac.in}}
}
\keywords{random complex, normal approximation, stabilization, nearest face-weights, local statistics}
\subjclass[2020]{
60F05, 
05E45 
60B99 
55U10 
}
\date{}
\begin{document}

\maketitle

\begin{abstract}
We prove normal approximation bounds for statistics of randomly weighted (simplicial) complexes. In particular, we consider the complete $d$-dimensional complex on $n$ vertices with $d$-simplices equipped with i.i.d.\ weights.
Our normal approximation bounds are quantified in terms of stabilization of difference operators, i.e., the effect on the statistic under addition/deletion of simplices. Our proof is based on Chatterjee's normal approximation bound and is a higher-dimensional analogue of the work of Cao on sparse \ER random graphs but our bounds are more in the spirit of `quantitative two-scale stabilization' bounds by Lachi\`eze-Rey, Peccati, and Yang. As applications, we prove a CLT for nearest face-weights in randomly weighted $d$-complexes and give a normal approximation bound for local statistics of random $d$-complexes.      
\end{abstract}

\section{Introduction}

This article considers normal approximations and convergences for combinatorial optimization problems on randomly weighted complexes. A first-order theory or laws of large numbers for combinatorial optimization problems on sparse random graphs is well-developed via local weak convergence \cite{AS04,vdH16,vdH17} or interpolation method \cite{Sa16}. A similar theory for random simplicial complexes or hypergraphs has been developed in \cite{Ka22,LiP16,LiP17} and \cite{DA18}, respectively.
However,  a second-order theory for such models is in a nascent stage with some efforts for central limit theorem (CLT) for statistics of \ER random graphs and configuration models \cite{AY20,BR19,Ja20,ERTZ21}. Cao \cite{Ca21} investigated normal approximations for combinatorial optimization problems on sparse \ER random graphs. In this article, we derive an abstract normal approximation result for random complexes borrowing Cao's approach but the actual bounds are analogous to the so-called `quantitative two-scale stabilization' bounds in \cite{LPY20}. More specifically, Cao's bounds necessitated quantifying the convergence of \ER random graphs to Galton--Watson tree but our bounds involve comparison of difference operators in two different scales on the random complex alone and this is more useful in our computations. This allows us to circumvent the task of quantifying the convergence of random complexes to acyclic complexes (higher-dimensional analogue of trees). We use our abstract normal approximation result to prove the normal convergence for some specific statistics. We now present an illustrative example and then present a preview of our general theorem along with discussions on related literature and proof ideas. 

Consider the following statistic. Let $\K_n$ be the $d$-skeleton
of the complete complex $\cK_n$ on $[n] \coloneqq \{1,2,\ldots,n\}$ with i.i.d.\ weights $\{w_{n,\tau}\colon\tau\in F_d(\cK_n)\}$ sampled from $\text{Exp}(1/n)$, the exponential distribution with mean $n$. Here, $F_i(\cK_n)$ denotes the set of all $i$-simplices in $\cK_n$, i.e., the $(i+1)$-tuples in $[n]$. (For definitions, refer to Section \ref{sec:model-result}.) For any $\sg\in F_{d-1}(\cK_n)$, we set {\em the nearest face-weight} of $\sg$ as
\begin{equation}
\label{e:nearfaceweight}
\text{NN}_n(\sg)\coloneqq\min_{\sg\subset\tau\in F_d(\cK_n)}w_{n,\tau}
\end{equation}
and so the total nearest face-weight $\text{NN}_n(\K_n)$ of $\K_n$ is defined by
\begin{equation}
\label{e:totnearfaceweight}
\text{NN}_n(\K_n)\coloneqq\sum_{\sg\in F_{d-1}(\cK_n)} \text{NN}_n(\sg).
\end{equation}
In the case $d = 1$, $\tNN$ is the sum of nearest neighbour weights on the complete graph with i.i.d.\ edge weights. It is a statistic of interest in probabilistic combinatorial optimization \cite{Steele97,Yukich06} and is also closely related to the minimal spanning tree. In fact, for all $d \geq 1$,  $\tNN$ is related to minimal spanning acycles, a topological generalization of minimal spanning trees; see for example, \cite{HS17,PSY20}. The law of large numbers for nearest neighbour weights (i.e., $\tNN$ in $d =1$) and the minimal spanning tree can be obtained using the results of \cite{AS04}. A CLT for minimal spanning tree was proven in \cite{Ja95} and it should be possible to do the same for nearest neighbour weights.  One can also deduce a CLT for nearest neighbour weights from the recent results of \cite{Ca21}. One of our applications is a CLT for nearest face-weights, i.e., $\tNN_n(\K_n)$ for all $d \geq 1$. For two non-negative real-valued sequences $a_n$ and $b_n$, we say that $a_n \sim b_n$ if $a_n/b_n \to 1$ as $n \to \infty$.
\begin{thm}\label{thm:CLT_NN}
It holds that
\[
\var(\text{NN}_n(\K_n)) \sim \Big( 1+\frac{d}{2} \Big)\binom{n}{d} \quad \mbox{and} \quad \frac{\text{NN}_n(\K_n)-\E[\text{NN}_n(\K_n)]}{\sqrt{\var(\text{NN}_n(\K_n))}}\xrightarrow[n\to\infty]{d}\cN(0,1),
\]
where $\cN(0,1)$ is a standard normal variable, and $\xrightarrow[n\to\infty]{d}$ indicates the convergence in distribution as $n\to\infty$.
\end{thm}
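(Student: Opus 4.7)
The plan is to apply the abstract normal approximation bound developed in this paper, which controls the Wasserstein distance from $(F_n-\E F_n)/\sqrt{\var F_n}$ to $\cN(0,1)$ in terms of $\var(F_n)$ together with moments of the first- and second-order difference operators $D_\tau F_n := F_n - F_n^{(\tau)}$ and $D^2_{\tau,\tau'}F_n$, where $F_n^{(\tau)}$ denotes $F_n = \tNN_n(\K_n)$ recomputed after the single weight $w_{n,\tau}$ is replaced by an independent copy $w'_{n,\tau}$. The proof thus reduces to (i) a sharp variance asymptotic and (ii) moment/stabilization control on these difference operators.

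For the variance, each $(d-1)$-simplex $\sg$ has exactly $n-d$ cofaces in $\cK_n$, so $\tNN_n(\sg) \sim \mathrm{Exp}((n-d)/n)$ and the diagonal contribution $\sum_\sg \var(\tNN_n(\sg))$ is asymptotic to $\binom{n}{d}$. Distinct $\sg \neq \sg'$ have independent nearest face-weights unless $|\sg \cap \sg'| = d-1$, in which case $\tau_0 := \sg \cup \sg'$ is the unique shared coface. Setting $W := w_{n,\tau_0}$ and noting that the minima over the remaining cofaces of $\sg$ and of $\sg'$ are conditionally independent given $W$,
\[
\cov\bigl(\tNN_n(\sg),\tNN_n(\sg')\bigr) = \var\!\Bigl(\tfrac{n}{n-d-1}\bigl(1-e^{-(n-d-1)W/n}\bigr)\Bigr) = \frac{1}{2n} + O(n^{-2}),
\]
by direct expansion against the $\mathrm{Exp}(1/n)$-density of $W$. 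The number of ordered pairs at overlap $d-1$ equals $d(n-d)\binom{n}{d}$, so the off-diagonal contribution is $\sim (d/2)\binom{n}{d}$, yielding $\var(F_n) \sim (1 + d/2)\binom{n}{d}$.

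For the stabilization estimates, resampling $w_{n,\tau}$ affects only those $\tNN_n(\sg)$ with $\sg \subset \tau$, so $D_\tau F_n$ is a sum of at most $d+1$ local differences, each pointwise bounded by $\tNN_n(\sg) + \tNN_n^{(\tau)}(\sg)$ and nonzero only on the event $\{\min(w_{n,\tau}, w'_{n,\tau}) < M_\sg\}$, where $M_\sg$ denotes the minimum of the remaining $n-d-1$ coface-weights of $\sg$. Since $\min(w_{n,\tau},w'_{n,\tau}) \sim \mathrm{Exp}(2/n)$ is the minimum of two independent weights and $M_\sg \sim \mathrm{Exp}((n-d-1)/n)$, this event has probability $O(1/n)$; combined with the bounded exponential moments of $\tNN_n(\sg)$, this gives $\E|D_\tau F_n|^p = O(1/n)$ for every fixed $p \geq 1$. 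Analogously, $D^2_{\tau,\tau'} F_n$ vanishes unless $|\tau \cap \tau'| = d$ (so that some $(d-1)$-face is common to both), a condition satisfied by only $(d+1)(n-d-1)$ choices of $\tau'$ per $\tau$, and on such pairs a further $O(1/n)$ localization applies. The main obstacle is to propagate this $O(1/n)$ gain systematically through the two-scale term of the abstract theorem: the crude pointwise bound $|D_\tau F_n| = O(1)$ is insufficient when $d$ is small, and the improvement coming from the rarity of $w_{n,\tau}$ being a coface-minimum is precisely what yields a decaying power of $n$ after division by $\var(F_n)^{3/2} = \Theta(n^{3d/2})$. Once installed, the abstract theorem delivers Wasserstein (hence distributional) convergence of the normalized statistic to $\cN(0,1)$.
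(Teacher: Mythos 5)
Your variance computation is exactly the paper's (Lemma~\ref{lem:var_NN}): diagonal terms from $\tNN_n(\sg)\sim\mathrm{Exp}((n-d)/n)$, covariance $\sim 1/(2n)$ for the $d(n-d)\binom{n}{d}$ ordered pairs sharing $d-1$ vertices, hence $(1+d/2)\binom{n}{d}$. The CLT part, however, has a genuine gap, and on two levels. First, the abstract result you invoke is not the one this paper provides: there is no second-order Poincar\'e-type bound here in terms of $D_\tau F_n$ and $D^2_{\tau,\tau'}F_n$ (that is the route of \cite{ERTZ21}, which the paper explicitly does not follow). The paper's bounds (Theorem~\ref{thm:add-one}, Corollaries~\ref{cor:no_gm_rho}--\ref{cor:suff_CLT}) are Kolmogorov-distance bounds quantified by the two-scale quantities $\tilde\dl_{k,n}$, $\tilde\rho_{k,n}$, $\gm_{k,n}$, and they carry explicit factors of $\lm_n=np_n$ and $J_n=1\vee\E[H^6]$. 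For the untruncated statistic one has $p_n=1$, so $\lm_n=n$, and since the weights are $\mathrm{Exp}(1/n)$ (mean $n$) any admissible Lipschitz bound $H$ must be of order $w\vee w'$, making $J_n$ of order $n^6$; the bound is then vacuous, so a ``direct application'' of the paper's machinery to $\tNN_n(\K_n)$ is impossible. Second, even within your own framing you flag the decisive step --- propagating the $O(1/n)$ localization gain through the normal-approximation bound --- as ``the main obstacle'' and never carry it out; asserting ``once installed, the abstract theorem delivers'' the CLT is precisely the part that needs proof.

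The paper's resolution is structurally different and is missing from your proposal: a truncation argument. One sets $\a_n=C_1\log n$, replaces $\tNN_n(\sg)$ by $\tNN_n(\sg)\wedge\a_n$, and observes that the truncated statistic is a function $f^{\a_n}$ of the random complex of $d$-simplices with weight at most $\a_n$, which is distributed as the model $\X_n^{\a_n}$ with $p_{n,\a_n}=1-e^{-\a_n/n}$, hence $\lm_n=O(\log n)$, with $H\equiv(d+1)\a_n$ so $J_n=O((\log n)^6)$, and --- crucially --- with an add-one cost $D_\tau f^{\a_n}$ that depends only on the $(d-1)$-faces of $\tau$, so $\tilde\dl_{k,n}=0$ for $k\ge1$ and no delicate $O(1/n)$ bookkeeping is needed at all. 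Corollary~\ref{cor:suff_CLT}(1), together with the variance lower bound $\Om(n^d)$, then gives the CLT for $\tNN_n^{\a_n}(\K_n)$, and separate de-truncation lemmas ($\P(\tNN_n(\K_n)=\tNN_n^{\a_n}(\K_n))\ge1-3n^{-C_2}$ and the comparison of variances) lift it to $\tNN_n(\K_n)$. Without either this truncation or a fully executed version of the propagation step you postpone, your argument does not establish the asymptotic normality.
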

A related statistic is the number of isolated simplices at level $\alpha$, i.e., $\sum_{\sg\in F_{d-1}(\cK_n)}\1[\text{NN}_n(\sg) > \alpha],  \alpha \in (0,\infty)$.  This is nothing but number of isolated $(d-1)$-simplices in the random $d$-complex (also known as Linial--Meshulam--Wallach complex) on $n$ vertices with every $d$-simplex chosen with probability $1 - e^{-\alpha/n}$. A normal approximation for that statistic was recently derived in \cite[Theorem 1.4]{ERTZ21} using discrete Malliavin--Stein method for Rademacher random variables. It can also be derived using our main theorems (see Example \ref{eg:isolface}) and also one can derive a normal approximation for weighted local statistics of the random $d$-complex (see Section~\ref{sec:localstat}). It is not immediate if the CLT for weighted random $d$-complex can be derived from the results of \cite{ERTZ21} or extensions of the same. CLTs and normal approximations for some local statistics of random clique complexes can be found in \cite{KM13,TNR22}, which are proven using Stein's method for $U$-statistics and sums of dissociated random variables. It would be worthwhile to extend our general theorems to cover weighted random clique complexes as well. When discussing our general theorems and proof approach below,  we shall give pointers to CLTs for Euclidean (lattice and continuum) random complexes.

We will now discuss our general theorem, proof approach, and some related literature. We consider the following randomly weighted complex.  Let $\X_n$ be a random complex on $n$ vertices with the complete $(d-1)$-skeleton and every $d$-simplex is chosen with a probability $p_n$ as well as equipped with i.i.d.\ non-negative weights. The aim of this article is to investigate a CLT for $f(\X_n)$, where $f$ is a functional on weighted $d$-complexes. Towards this, we prove a normal approximation for 
$$ \frac{f(\X_n)- \E[f(\X_n)]}{\sqrt{\var(f(\X_n))}}.$$  
Our CLT is driven by suitably formalizing the notion of stabilization for randomly weighted complexes. The normal approximation theorem relies on quantifying stabilization via difference operators of $f$, i.e., quantifying the effect of adding/deleting some $d$-simplices on $f(\X_n)$. 

Stabilization via difference operators was identified as a key to show CLTs in Euclidean settings in \cite{Pe01,PY01,Tr19} and this has been highly useful in proving CLTs for Euclidean random complexes; see \cite{YSA17,HST18,HT18,KP19,SY22}. The analogue of such a result in greater generality is still missing in \ER type random graph models and related random complex models. \cite{Ch08} introduced a new method of normal approximation which involved quantifying the stabilization of difference operators in two-scales and has now inspired CLTs for statistics of Euclidean random structures; see \cite{Ch14,LPS16,CS17,LPY20}. This has been adapted to include statistics of weighted \ER random graphs in \cite{Ca21}, which is the starting point of our approach. While our approach is similar to that of \cite{Ca21},  our bounds are of a different nature. They are given intrinsically in terms of the random complex itself in the spirit of `quantiative two-scale stabilization' bounds in \cite{LPY20} whereas the bounds in \cite[Theorem 2.4]{Ca21} are quantified by comparing the statistic on random graphs with that on an appropriate randomly weighted Galton--Watson tree, the local weak limit of \ER random graphs. Apart from simplifying certain computations in our applications, the other reason to do so is that the convergence rate of random complexes to random acycles is still not available. Though it may be possible to derive rates by refining the methods in \cite{Ka22,LiP16,LiP17}, our normal approximation result eschews the need for such rates. The question of proving even a CLT for more global functionals such as Betti numbers, persistent Betti numbers and the total weight of minimal spanning acycles remains open.

The rest of the article is organized as follows: In the next section (Section~\ref{sec:model-result}), we introduce our weighted random complex model and state our main normal approximation results and some consequences. In Section~\ref{sec:applns},  we give the two applications of our general results to nearest face-weights and local statistics. Proofs of our main results are deferred to Section~\ref{sec:proofs}. 

\section{Model and main results}\label{sec:model-result}
In this section, we introduce our weighted simplicial complex model (Section~\ref{ssec:wgt_cpx}) and state the general main results and some corollaries (Section~\ref{ssec:mainres}).

\subsection{Preliminaries on weighted complexes}
\label{ssec:wgt_cpx}
Let $V$ be a nonempty set.
A collection $X$ of finite subsets of $V$ is called a \textit{simplicial complex} on $V$ if $X$ satisfies (i)~$\{v\}\in X$ for all $v\in V$; and (ii)~$\tau\in X$ whenever $\sg \in X$ and $\tau\subset\sg$.
Note that all simplicial complexes must include the empty set.
Every element $\sg\in X$ is called a \textit{simplex in $X$}, and the \textit{dimension} of $\sg$ is defined by $\dim\sg\coloneqq\#\sg-1$.
A simplex $\sg\in X$ with $\dim\sg=k$ is referred to as a \textit{$k$-simplex in $X$}.
For $k\ge-1$, let $F_k(X)$ denote the set of all $k$-simplices in $X$, and set $f_k(X)\coloneqq\# F_k(X)$.
The \textit{degree} $\deg_X(\tau)$ of a $k$-simplex $\tau$ in $X$ is defined as the number of $(k+1)$-simplices in $X$ containing $\tau$.
A simplicial complex contained in $X$ is called a \textit{subcomplex} of $X$.
For $d\ge0$, the subcomplex $X^{(d)}\coloneqq\bigsqcup_{k=-1}^d F_k(X)$ is called the \textit{$d$-skeleton} of $X$.

Let $\cK_n$ be the \textit{complete simplicial complex} on $[n]\coloneqq\{1,2,\ldots,n\}$, i.e., $\cK_n\coloneqq 2^{[n]}$.
Throughout this article, we fix $d\in\N$, and call a simplicial complex $X$ such that $\cK_n^{(d-1)}\subset X\subset\cK_n^{(d)}$ a \textit{$d$-complex in $\cK_n$} for simplicity.
A \textit{weighted $d$-complex in $\cK_n$} is a pair of a $d$-complex $X$ in $\cK_n$ and a family $\{w_\tau\colon\tau\in F_d(X)\}$ of nonnegative values.
One convenient way to give a weighted $d$-complex in $\cK_n$ is the following: given $B=\{b_\tau\colon\tau\in F_d(\cK_n)\}$ and $W=\{w_\tau\colon\tau\in F_d(\cK_n)\}$ such that $b_\tau\in\{0,1\}$ and $w_\tau\in[0,\infty)$ for every $\tau\in F_d(\cK_n)$, we construct a weighted $d$-complex $\X_{(B,W)}$ in $\cK_n$ by starting with $\cK_n^{(d-1)}$ and adding each $d$-simplex $\tau\in F_d(\cK_n)$ with its weight $w_\tau$ if and only if $b_\tau=1$.
We call $\X_{(B,W)}$ a \textit{weighted $d$-complex generated by $(B,W)$}.

We shall consider a real-valued function $f$ on weighted $d$-complexes in $\cK_n$.
We will need to assume that $f$ is invariant under weighted isomorphism and Lipschitz in the following sense.
\begin{itemize}
\item \textit{Invariance under weighted isomorphism}.
Two weighted $d$-complexes $(X,\{w_\tau\colon\tau\in F_d(X)\})$ and $(X',\{w'_\tau\colon\tau\in F_d(X')\})$ in $\cK_n$ are said to be \textit{isomorphic} if there exists a permutation $\pi$ on $[n]$ such that for any $\tau\in F_d(\cK_n)$, $\tau\in X$ iff $\pi(\tau)\in X'$, and $w_\tau=w_{\pi(\tau)}$ holds in that case.
Here, $\pi(\tau)$ indicates the image of $\tau$ by $\pi$.
If $f(\X) = f(\X')$ whenever the weighted $d$-complexes $\X=(X,\{w_\tau\colon\tau\in F_d(X)\})$ and $\X'=(X',\{w'_\tau\colon\tau\in F_d(X')\})$ are isomorphic, then we say that $f$ is {\em invariant under weighted isomorphisms}.

\item \textit{Lipschitzness}.
$f$ is said to be {\em $H$-Lipschitz} if there exists a measurable function $H\colon[0,\infty)^2\to[0,\infty)$ such that for any $B=\{b_\tau\colon\tau\in F_d(\cK_n)\}$, $W=\{w_\tau\colon\tau\in F_d(\cK_n)\}$, $B'=\{b'_\tau\colon\tau\in F_d(\cK_n)\}$, and $W'=\{w'_\tau\colon\tau\in F_d(\cK_n)\}$,
\[
\bigl|f\bigl(\X_{(B,W)}\bigr)-f\bigl(\X_{(B',W')}\bigr)\bigr| \le\sum_{\substack{\tau\colon b_\tau\vee b'_\tau=1\\b_\tau\neq b'_\tau}}H(w_\tau,w'_\tau).
\]
Here, $a\vee b$ denotes $\max\{a,b\}$ for any $a,b\in\R$. 
\end{itemize}

In both $f$ and $H$, we have suppressed the dependence on $n$ for convenience.
The following notation will be repeatedly used in this article.
\begin{df}
\label{d:knbhd_complex}
Let $X$ be a $d$-complex in $\cK_n$.
We call a finite sequence $(\sg_0,\sg_1\ldots,\sg_k)$ of $(d-1)$-simplices a \textit{path in $X$} if $\sg_{i-1}\cup\sg_i$~($i=1,2,\ldots,k$) are distinct $d$-simplices in $X$.
Here, $k$ is called the \textit{length} of the path.
Two $(d-1)$-simplices $\sg$ and $\sg'$ are said to be \textit{connected in $X$}, denoted by $\sg\xleftrightarrow[]{X}\sg'$, if there exists a path $(\sg_0,\sg_1\ldots,\sg_k)$ such that $\sg_0=\sg$ and $\sg_k=\sg'$.
Given $\sg\in F_{d-1}(\cK_n)$ and $k\in\N$, let $B_k(\sg,X)$ denote the simplicial complex consisting of $\cK_n^{(d-1)}$ and all the $d$-simplices $\tau$ in $X$ such that there exists a path $(\sg_0,\sg_1,\ldots,\sg_i)$ of length at most $k$ satisfying that $\sg_0=\sg$ and $\sg_{i-1}\cup\sg_i=\tau$.
Furthermore, for any $\tau\in F_d(\cK_n)$, we define
\[
B_k(\tau,X)\coloneqq\bigcup_{\sg\colon\text{$(d-1)$-face of $\tau$}}B_k(\sg,X).
\]
Here, a $(d-1)$-face of $\tau$ means a $(d-1)$-simplex contained in $\tau$.
Note that $\tau\in B_k(\tau,X)$, if and only if $\tau\in X$ and $k\ge1$.
For the case of a weighted $d$-complex $\X=(X,\{w_\tau\colon\tau\in F_d(X)\})$, we define weighted $d$-complexes $B_k(\sg,\X)$ and $B_k(\tau,\X)$ as the $d$-complexes $B_k(\sg,X)$ and $B_k(\tau,X)$ with their weights, respectively.
\end{df}

\subsection{Main results}
\label{ssec:mainres}
In this subsection, we state the main general results after introducing some additional conditions regarding randomly weighted complexes.

We recall our randomly weighted $d$-complex model $\X_n$. 
Let $p_n\in(0,1)$, and let $D_n$ be a probability measure on $[0,\infty)$.
Let $B_n=\{b_{n,\tau}\colon\tau\in F_d(\cK_n)\}$ and $W_n=\{w_{n,\tau}\colon\tau\in F_d(\cK_n)\}$ be mutually independent random variables such that $b_{n,\tau}\sim\ber(p_n)$ and $w_{n,\tau}\sim D_n$ for every $\tau\in F_d(\cK_n)$.
We denote by $\X_n= X_{(B_n,W_n)}$ the randomly weighted $d$-complex generated by $(B_n,W_n)$.
\begin{df}
Let $B'_n=\{b'_{n,\tau}\colon\tau\in F_d(\cK_n)\}$ and $W'_n=\{w'_{n,\tau}\colon\tau\in F_d(\cK_n)\}$ be independent copies of $B_n=\{b_{n,\tau}\colon\tau\in F_d(\cK_n)\}$ and $W_n=\{w_{n,\tau}\colon\tau\in F_d(\cK_n)\}$, respectively.
Given a collection $F$ of $d$-simplices in $\cK_n$, let $\X_n^F$ be the randomly weighted $d$-complex obtained by using $\{(b'_{n,\tau},w'_{n,\tau})\colon\tau\in F\}$ in place of $\{(b_{n,\tau},w_{n,\tau})\colon\tau\in F\}$.
When $F=\{\tau\}$, we write $\X_n^\tau$ instead of $\X_n^{\{\tau\}}$ for simplicity.
Note that $\X_n^\emptyset=\X_n$.
For $\tau\in F_d(\cK_n)$ and $F\subset F_d(\cK_n)\setminus\{\tau\}$, we define
\[
\Dl_\tau f(\X_n^F)\coloneqq f(\X_n^F)-f(\X_n^{F\cup\{\tau\}}),
\]
and call it a \textit{randomized derivative of $f(\X_n^F)$ at $\tau$}.
In addition, define for any $k\in\N$, the \textit{local randomized derivative}
\[
\Dl_\tau f(B_k(\tau,\X_n^F))\coloneqq f(B_k(\tau,\X_n^F))-f(B_k(\tau,\X_n^{F\cup\{\tau\}})).
\]
\end{df}
Throughout this article, let us denote 
\[
\lm_n\coloneqq np_n\text{, }\sg_n\coloneqq\sqrt{\var(f(\X_n))}\text{, and }J_n\coloneqq1\vee\E[H(w_{n,\tau},w'_{n,\tau})^6].
\]
Note that $\sigma_n,J_n$ depend on $n$ through $f,H$ as well. Additionally, using arbitrarily fixed disjoint $(d-1)$-simplices $\sg,\sg'\in F_{d-1}(\cK_n)$, we define, 
\[
\gm_{k,n}\coloneqq\P\bigl(\text{$\sg\xleftrightarrow[]{\X_n}\sg'$ by a path of length at most $k$}\bigr),
\] 
for any $k\in\N$. Note that the probability is the same for any two disjoint $(d-1)$-simplices $\sigma,\sigma'$ in $\cK_n$. Furthermore, $C\ge0$ will be constants depending only on $d$, which are always taken to be sufficiently large and can be changed line to line.

The following are the main general results.
\begin{thm}\label{thm:rdm_deriv}
Let $\X_n$ be the randomly weighted $d$-complex as defined above.
Let $f$ be a real-valued function on weighted $d$-complexes in $\cK_n$ that is invariant under weighted isomorphisms and $H$-Lipschitz with $J_n < \infty$.
For each $k\in\N$, let $\dl_{k,n}$ be a constant that dominates
\[
\E[\{\Dl_\tau f(\X_n)-\Dl_\tau f(B_k(\tau,\X_n))\}^2\mid b_{n,\tau},b'_{n,\tau},b_{n,\tau'}]
\]
on the event $\{b_{n,\tau}+b'_{n,\tau}=1\}$ for any disjoint $d$-simplices $\tau,\tau'\in F_d(\cK_n)$.
Furthermore, let $\rho_{k,n}$ be a constant that dominates
\[
\cov(\Dl_\tau f(B_k(\tau,\X_n))\Dl_\tau f(B_k(\tau,\X_n^F)),\Dl_{\tau'} f(B_k(\tau',\X_n))\Dl_{\tau'} f(B_k(\tau',\X_n^{F'}))\mid b_{n,\tau},b'_{n,\tau},b_{n,\tau'},b'_{n,\tau'})
\]
on the event $\{b_{n,\tau}+b'_{n,\tau}=b_{n,\tau'}+b'_{n,\tau'}=1\}$ for any disjoint $d$-simplices $\tau,\tau'\in F_d(\cK_n)$, $F\subset F_d(\cK_n)\setminus\{\tau\}$, and $F'\subset F_d(\cK_n)\setminus\{\tau'\}$.
Then, for every $k\in\N$,
\begin{align}\label{eq:main1}
&d_K\biggl(\cL\biggl(\frac{f(\X_n)-\E[f(\X_n)]}{\sg_n}\biggr),\cN(0,1)\biggr)\nonumber\\
&\le C\biggl(\frac{n^d}{\sg_n^2}\biggr)^{1/2}\biggl[\bigl(J_n^{1/2}\dl_{k,n}^{1/2}+\rho_{k,n}+J_n^{2/3}\gm_{k,n}^{1/2}\bigr)\lm_n^2+J_n^{2/3}\biggl(\frac{\lm_n^2}n+\frac{\lm_n}{n^d}+\frac{\lm_n^3}n\biggr)\biggr]^{1/4}
+\biggl(\frac{n^d}{\sg_n^2}\biggr)^{3/4}\frac{J_n^{1/4}\lm_n^{1/2}}{n^{d/4}}.
\end{align}
Here, $d_K(\mu,\nu)\coloneqq\sup_{t\in\R}|\mu((-\infty,t])-\nu((-\infty,t])|$ is the Kolmogorov distance between probability measures $\mu$ and $\nu$ on $\R$.
\end{thm}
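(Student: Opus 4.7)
The proof plan is to apply Chatterjee's normal-approximation method \cite{Ch08}, viewing $f(\X_n)$ as a function of the independent array $\{(b_{n,\tau},w_{n,\tau}):\tau\in F_d(\cK_n)\}$. Chatterjee's abstract theorem furnishes a Kolmogorov bound of schematic form
\[
d_K\Bigl(\cL\Bigl(\tfrac{f(\X_n)-\E f(\X_n)}{\sg_n}\Bigr),\cN(0,1)\Bigr)\le\frac{C}{\sg_n^2}\bigl(\var T_n\bigr)^{1/4}+\frac{C}{\sg_n^{3/2}}\Bigl(\sum_{\tau\in F_d(\cK_n)}\E|\Dl_\tau f(\X_n)|^3\Bigr)^{1/2},
\]
where $T_n$ is a symmetric bilinear statistic built from products of randomized derivatives at $\tau$ computed on $\X_n$ and on an independently resampled copy $\X_n^{F_\tau}$, with $F_\tau$ a random subset of $F_d(\cK_n)\setminus\{\tau\}$ drawn from the Chatterjee construction. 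The boundary term is immediate from the $H$-Lipschitz hypothesis: $|\Dl_\tau f(\X_n)|\le H(w_{n,\tau},w'_{n,\tau})\1[b_{n,\tau}+b'_{n,\tau}\ge1]$, so the sum is bounded by $C n^d p_n J_n^{1/2}$, producing the second summand of \eqref{eq:main1} after normalization by $\sg_n^{3/2}$.

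The bulk of the work is to estimate $\var T_n$ by expanding
\[
\var T_n=\sum_{\tau,\tau'\in F_d(\cK_n)}\cov\bigl(\Dl_\tau f(\X_n)\,\Dl_\tau f(\X_n^{F_\tau}),\ \Dl_{\tau'} f(\X_n)\,\Dl_{\tau'} f(\X_n^{F_{\tau'}})\bigr),
\]
and treating three regimes of pairs $(\tau,\tau')$. The diagonal $\tau=\tau'$ (of cardinality $O(n^d)$) and the off-diagonal but vertex-sharing pairs (of cardinality $O(n^{2d-1})$) are handled crudely: H\"older combined with $\E[H(w_{n,\tau},w'_{n,\tau})^6]=J_n$ bounds each covariance by $p_n^j J_n^{2/3}$ ($j\in\{1,2\}$), producing the $\lm_n/n^d$, $\lm_n^2/n$, and $\lm_n^3/n$ terms after the usual $n$- and $p_n$-counting.

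The dominant regime is that of disjoint $\tau,\tau'$. The plan is first to localize each derivative, replacing $\Dl_\tau f(\X_n^{\cdot})$ by $\Dl_\tau f(B_k(\tau,\X_n^{\cdot}))$ at an $L^2$-cost bounded by $\dl_{k,n}^{1/2}$ by hypothesis; two Cauchy--Schwarz applications using $\|\Dl_\tau f(\X_n)\|_{L^2}\le J_n^{1/2}$ convert this into the $J_n^{1/2}\dl_{k,n}^{1/2}$ contribution. After localization, condition on the four indicators $(b_{n,\tau},b'_{n,\tau},b_{n,\tau'},b'_{n,\tau'})$ and split the event $\{b_{n,\tau}+b'_{n,\tau}=b_{n,\tau'}+b'_{n,\tau'}=1\}$ according to whether some $(d-1)$-face of $\tau$ is connected in $\X_n$ to some $(d-1)$-face of $\tau'$ by a path of length at most $2k$. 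On the disconnection event, $B_k(\tau,\X_n^{\cdot})$ and $B_k(\tau',\X_n^{\cdot})$ are functions of disjoint blocks of the underlying (resampled) independent array, so the conditional covariance vanishes; on the connection event, it is bounded by $\rho_{k,n}$ by hypothesis, and a union bound over the $(d-1)$-faces of $\tau'$ controls the probability of connection by $C\gm_{k,n}$, yielding via H\"older the $J_n^{2/3}\gm_{k,n}^{1/2}$ companion term. Multiplying by $O(n^{2d})$ disjoint pairs and by $p_n^4\sim\lm_n^4/n^{4d}$ from the indicator factors produces the bracketed expression in \eqref{eq:main1}; taking the fourth root and dividing by $\sg_n^2$ yields the first summand.

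The main obstacle is this localization-and-decoupling step: one must rigorously identify the $\sigma$-algebra on which $\Dl_\tau f(B_k(\tau,\X_n^{F_\tau}))$ is measurable in terms of the combined data of $\X_n$ and the resampled copy, and then verify that on the disconnection event these $\sigma$-algebras are independent for $\tau$ and $\tau'$. Choosing the threshold length to be $2k$ (not $k$) is essential, since the resampling at $F_\tau$ can move edges within the $k$-neighborhood. A secondary but equally fiddly point is orchestrating the H\"older exponents so that every contribution from $\var T_n$ is majorized by the correct power of $J_n$ ($J_n^{1/2}$ against the localization error, $J_n^{2/3}$ against the connection indicator and the diagonal/intersecting pairs) and matches the exponents displayed in \eqref{eq:main1}.
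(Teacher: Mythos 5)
Your skeleton is the same as the paper's: start from Chatterjee's normal approximation bound (as in Cao), control the third-moment term by Lipschitzness (giving the $J_n^{1/4}\lm_n^{1/2}n^{-d/4}$ summand), bound the covariances for coinciding or vertex-sharing pairs crudely via H\"older and $J_n$, and for disjoint pairs decompose each randomized derivative into its local part $\Dl_\tau f(B_k(\tau,\cdot))$ plus a remainder controlled through $\dl_{k,n}$ and Cauchy--Schwarz, which produces $J_n^{1/2}\dl_{k,n}^{1/2}$. Up to that point you are on the paper's track (modulo two slips: for the resampled copy $\X_n^{F}$ the localization error is not covered literally ``by hypothesis'' --- one needs the distributional identity the paper establishes by conditioning on $\bfb_{n,\tau}$ and on $b_{n,\tau'}$ or $b'_{n,\tau'}$ according to whether $\tau'\in F$; and your count ``$O(n^{2d})$ disjoint pairs times $p_n^4$'' is wrong: there are $O(n^{2d+2})$ pairs and the two derivative factors at $\tau$ share the same indicator pair, so the event factor is $p_n^2$, which is what makes the bracket come out as $(\cdot)\lm_n^2$).

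The genuine gap is in how $\rho_{k,n}$ and $\gm_{k,n}$ enter for disjoint pairs. You split the conditional covariance of the localized products over a connection/disconnection event at scale $2k$, claiming it vanishes on disconnection and is bounded by $\rho_{k,n}$ on connection. This does not parse: the covariance conditional on $(\bfb_{n,\tau},\bfb_{n,\tau'})$ is a function of those indicators, so it cannot be evaluated ``on'' an event of the complex; if instead you multiply the variables by the disconnection indicator, independence is destroyed because that indicator depends on both neighbourhoods; and the hypothesis on $\rho_{k,n}$ bounds the covariance given the indicators only, not further restricted to a connection event. The paper needs no such decoupling inside Theorem~\ref{thm:rdm_deriv}: it applies the law of total covariance given $(\bfb_{n,\tau},\bfb_{n,\tau'})$, uses the assumed $\rho_{k,n}$ exactly on the conditional-covariance piece (the decoupling/exploration argument is deferred to the corollaries, where $\tilde\rho_{k,n}$ is estimated and the price is $\gm_{k,n}^{1/3}$), and handles the remaining piece --- the covariance of the two conditional expectations, which are functions of both indicator pairs --- by comparing them with their versions computed in $\X_n-\tau'$ and $\X_n-\tau$; these depend on $\bfb_{n,\tau}$ alone and $\bfb_{n,\tau'}$ alone, hence are independent, and the discrepancy event $\{\tau'\in B_k(\tau,\X_n)\}\cup\{\tau'\in B_k(\tau,\X_n^F)\}$ has conditional probability at most $C\gm_{k,n}$ (paths of length $k$ suffice, not $2k$), so conditional Cauchy--Schwarz yields the $J_n^{2/3}p_n^2\gm_{k,n}^{1/2}$ term. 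Your route, even if repaired by constructing independent surrogates off a rare event, would pay a H\"older price with exponent $1/3$ (only sixth moments are available) and at radius $2k$, i.e.\ a bound with $\gm_{2k,n}^{1/3}$ rather than the $\gm_{k,n}^{1/2}$ claimed in \eqref{eq:main1}, and it would leave the dependence through the conditional expectations unaccounted for; so as written the key step fails to deliver the stated estimate.
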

Again observe that $\delta_{k,n},\rho_{k,n}$ are the same for any two disjoint $d$-simplices $\tau,\tau'$ in $\cK_n$.
We can derive the same type of estimate as that in Theorem~\ref{thm:rdm_deriv} in terms of the add-one cost at a $d$-simplex, which is often easy to check in applications.
\begin{df}
For $\tau\in F_d(\cK_n)$ and $F\subset F_d(\cK_n)\setminus\{\tau\}$, let $\X_n^F+\tau$ denote the randomly weighted $d$-complex in $\cK_n$ obtained by adding the $d$-simplex $\tau$ to $\X_n^F$ together with its weight $w_{n,\tau}$; moreover, let $\X_n^F-\tau$ denote the randomly weighted $d$-complex in $\cK_n$ obtained by deleting the $d$-simplex $\tau$ from $\X_n^F$ together with its weight $w_{n,\tau}$.
We then define
\begin{align*}
D_\tau f(\X_n^F)&\coloneqq f(\X_n^F+\tau)-f(\X_n^F-\tau)
\shortintertext{and for any $k\in\N$,}
D_\tau f(B_k(\tau,\X_n^F))&\coloneqq f(B_k(\tau,\X_n^F+\tau))-f(B_k(\tau,\X_n^F-\tau)).
\end{align*}
\end{df}
\begin{thm}\label{thm:add-one}
Let $\X_n$ be the randomly weighted $d$-complex as defined above.
Let $f$ be a real-valued function on weighted $d$-complexes in $\cK_n$ that is invariant under weighted isomorphisms and $H$-Lipschitz with $J_n < \infty$.
For any $k\in\N$ and arbitrarily fixed disjoint $d$-simplices $\tau,\tau'\in F_d(\cK_n)$, define
\begin{align*}
\tilde\dl_{k,n}&\coloneqq\max_{i=0,1}\E[\{D_\tau f(\X_n)-D_\tau f(B_k(\tau,\X_n))\}^2\mid b_{n,\tau'}=i]
\shortintertext{and}
\tilde\rho_{k,n}&\coloneqq\sup_{F,F'\subset F_d(\cK_n)\setminus\{\tau,\tau'\}}\cov(D_\tau f(B_k(\tau,\X_n))D_\tau f(B_k(\tau,\X_n^F)),D_{\tau'} f(B_k(\tau',\X_n))D_{\tau'} f(B_k(\tau',\X_n^{F'}))).
\end{align*}
Then, for every $k\in\N$,
\begin{align}\label{eq:add-one}
&d_K\biggl(\cL\biggl(\frac{f(\X_n)-\E[f(\X_n)]}{\sg_n}\biggr),\cN(0,1)\biggr)\nonumber\\
&\le C\biggl(\frac{n^d}{\sg_n^2}\biggr)^{1/2}\biggl[\bigl(J_n^{1/2}\tilde\dl_{k,n}^{1/2}+\tilde\rho_{k,n}+J_n^{2/3}\gm_{k,n}^{1/3}\bigr)\lm_n^2+J_n^{2/3}\biggl(\frac{\lm_n^2}n+\frac{\lm_n}{n^d}+\frac{\lm_n^3}n\biggr)\biggr]^{1/4}
+\biggl(\frac{n^d}{\sg_n^2}\biggr)^{3/4}\frac{J_n^{1/4}\lm_n^{1/2}}{n^{d/4}}.
\end{align}
\end{thm}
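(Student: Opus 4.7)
The plan is to deduce Theorem~\ref{thm:add-one} from Theorem~\ref{thm:rdm_deriv} by relating the randomized derivative $\Dl_\tau f$ to the add-one derivative $D_\tau f$, exploiting the exchangeability of $(b_{n,\tau},w_{n,\tau})\leftrightarrow(b'_{n,\tau},w'_{n,\tau})$, which is a measure-preserving automorphism of the underlying product probability space. The bridge identity is the following: on the event $\{b_{n,\tau}=1,b'_{n,\tau}=0\}$, direct inspection of the definitions gives $\Dl_\tau f(B_k(\tau,\X_n^F))=D_\tau f(B_k(\tau,\X_n^F))$ for every admissible $F$, since $\X_n^F$ has $\tau$ present with weight $w_{n,\tau}$ and $\X_n^{F\cup\{\tau\}}$ has $\tau$ absent. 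On $\{b_{n,\tau}=0,b'_{n,\tau}=1\}$ a symmetric identity holds with $w_{n,\tau}$ replaced by $w'_{n,\tau}$; this case is reduced to the first via the swap at $\tau$. Two further observations are that $D_\tau f(B_k(\tau,\X_n^F))$ is independent of $(b_{n,\tau},b'_{n,\tau})$, and that its value is unchanged when $\tau$ is added to or removed from $F$.

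With these preparations, $\tilde\dl_{k,n}$ dominating $\dl_{k,n}$ follows almost at once: on either half of $\{b_{n,\tau}+b'_{n,\tau}=1\}$, the conditional second moment in the definition of $\dl_{k,n}$ is, after the swap, distributed as $\E[\{D_\tau f(\X_n)-D_\tau f(B_k(\tau,\X_n))\}^2\mid b_{n,\tau'}]$, which is dominated by $\tilde\dl_{k,n}$. The treatment of $\rho_{k,n}$ is the delicate step. Its defining event splits into four sub-cases, and on the ``diagonal'' sub-case $\{b_{n,\tau}=b_{n,\tau'}=1\}$ the bridge identity directly identifies the conditional covariance as an unconditional covariance matching the shape of $\tilde\rho_{k,n}$. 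On each non-diagonal sub-case one applies the swap at $\tau$ and/or $\tau'$ to match the diagonal configuration; for instance, the swap at $\tau'$ sends $\X_n$ to $\X_n^{\tau'}$ and $\X_n^F$ to $\X_n^{F\cup\{\tau'\}}$ (when $\tau'\notin F$). Crucially, the two $D_{\tau'} f$ factors are invariant under this swap because $D_{\tau'}$ overrides the $\tau'$-coordinate, but the two $D_\tau f$ factors can change only when $\tau'$ actually enters the ball $B_k(\tau,\cdot)$, an event of probability at most $C\gm_{k,n}$ by definition of $\gm_{k,n}$.

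The contribution of this mismatch is controlled by H\"older's inequality with exponents $(6,6,3,3)$. Using the Lipschitz hypothesis, $\|D_\tau f(B_k(\tau,\X_n))\|_6\le J_n^{1/6}$ and $\|D_{\tau'} f(B_k(\tau',\X_n))\cdot D_{\tau'} f(B_k(\tau',\X_n^{F'}))\|_3\le J_n^{1/3}$; combined with the indicator of the mismatch event of probability $\le C\gm_{k,n}$, this yields an error at most $CJ_n^{2/3}\gm_{k,n}^{1/3}$. Thus $\rho_{k,n}\le\tilde\rho_{k,n}+CJ_n^{2/3}\gm_{k,n}^{1/3}$. Substituting $\dl_{k,n}\le\tilde\dl_{k,n}$ and this bound into \eqref{eq:main1}, and absorbing the term $J_n^{2/3}\gm_{k,n}^{1/2}$ into $J_n^{2/3}\gm_{k,n}^{1/3}$ (via $\gm_{k,n}\le1$), yields \eqref{eq:add-one}. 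The main technical obstacle is the meticulous bookkeeping of which primed/unprimed coordinates appear after the swaps in each of the four sub-cases of $\rho_{k,n}$'s defining event, and the tuning of the H\"older exponents so that the exponents of $J_n$ and $\gm_{k,n}$ land exactly as in the target bound.
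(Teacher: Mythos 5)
Your reduction to Theorem~\ref{thm:rdm_deriv} is the right frame, and your treatment of $\dl_{k,n}$ is essentially the paper's: on $\{b_{n,\tau}+b'_{n,\tau}=1\}$ the randomized derivative coincides (up to sign and the swap $(b_{n,\tau},w_{n,\tau})\leftrightarrow(b'_{n,\tau},w'_{n,\tau})$) with the add-one derivative, which is independent of $(b_{n,\tau},b'_{n,\tau})$, so the conditional second moment is dominated by $\tilde\dl_{k,n}$. The absorption of $J_n^{2/3}\gm_{k,n}^{1/2}$ into $J_n^{2/3}\gm_{k,n}^{1/3}$ via $\gm_{k,n}\le1$ is also exactly how the paper gets from \eqref{eq:main1} to \eqref{eq:add-one}.

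The gap is in the $\rho_{k,n}$ step. You claim that on the diagonal sub-case $\{\bfb_{n,\tau}=\bfb_{n,\tau'}=(1,0)\}$ the bridge identity ``directly identifies the conditional covariance as an unconditional covariance matching the shape of $\tilde\rho_{k,n}$.'' This is false: although $D_\tau$ overrides the $\tau$-coordinate, the quantity $D_\tau f(B_k(\tau,\X_n))D_\tau f(B_k(\tau,\X_n^F))$ still depends on $b_{n,\tau'}$ (and on $b'_{n,\tau'}$ when $\tau'\in F$) through whether $\tau'$ lies in the $k$-balls around $\tau$, and symmetrically the $\tau'$-factors depend on $\bfb_{n,\tau}$ --- you even use this dependence yourself in the off-diagonal swap argument, so the diagonal claim is inconsistent with it. Conditioning on $b_{n,\tau'}=1$ forces $\tau'$ into the complex, while in $\tilde\rho_{k,n}$ it appears with probability $p_n$, so the conditional and unconditional covariances differ (by a $\gm_{k,n}$-sized amount, but not by zero). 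Since your off-diagonal cases are reduced to the diagonal one, the argument does not close as written. The paper's fix is a two-sided surgery before dropping the conditioning: replace $\X_n,\X_n^F$ by $\X_n-\tau',\X_n^F-\tau'$ in the $\tau$-factors and $\X_n,\X_n^{F'}$ by $\X_n-\tau,\X_n^{F'}-\tau$ in the $\tau'$-factors (cost $CJ_n^{2/3}\gm_{k,n}^{1/3}$ via the connection probability \eqref{eq:prob_LL-LL} and H\"older, as in \eqref{eq:cdcov_modi}); after this the integrands are measurable with respect to coordinates other than $\tau,\tau'$ (apart from the weights $w_{n,\tau},w_{n,\tau'}$), so the conditional covariance equals the unconditional one \emph{and} the replacement $F\mapsto F\setminus\{\tau'\}$, $F'\mapsto F'\setminus\{\tau\}$ --- needed to match the constraint $F,F'\subset F_d(\cK_n)\setminus\{\tau,\tau'\}$ in $\tilde\rho_{k,n}$, a point your bookkeeping does not address --- becomes exact \eqref{eq:cdcov_cov}; finally one re-adds $\tau,\tau'$ at the cost of another $CJ_n^{2/3}\gm_{k,n}^{1/3}$ \eqref{eq:cov_modi}. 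With these deletion/re-addition steps inserted (your $\gm_{k,n}$-plus-H\"older estimate is the right tool for each of them), your proof becomes the paper's; without them, the key identification in the diagonal case is unjustified.
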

\begin{rem}\label{rem:add-one}
As we will see in Subsection~\ref{ssec:add-one}, we may take $\tilde\dl_{k,n}$ and $\tilde\rho_{k,n}+CJ_n^{2/3}\gm_{k,n}^{1/3}$ as the constants $\dl_{k,n}$ and $\rho_{k,n}$ in Theorem~\ref{thm:rdm_deriv}, respectively.
This is why the term $J_n^{2/3}\gm_{k,n}^{1/2}$ in~\eqref{eq:main1} is replaced into $J_n^{2/3}\gm_{k,n}^{1/3}$ in~\eqref{eq:add-one}.
\end{rem}
By crude estimates on $\gm_{k,n}$ and $\tilde\rho_{k,n}$, we obtain the following corollary.
\begin{cor}\label{cor:no_gm_rho}
Let $d\ge2$, and let $\X_n$ be the randomly weighted $d$-complex as defined above.
Let $f$ be a real-valued function on weighted $d$-complexes in $\cK_n$ that is invariant under weighted isomorphisms and $H$-Lipschitz with $J_n < \infty$.
Let $\tilde\dl_{k,n}$ be the same as in Theorem~\ref{thm:add-one}.
Then, for every $1\le k\le n$,
\begin{align}\label{eq:no_gm_rho}
&d_K\biggl(\cL\biggl(\frac{f(\X_n)-\E[f(\X_n)]}{\sg_n}\biggr),\cN(0,1)\biggr)\nonumber\\
&\le CJ_n^{1/6}(1\vee\lm_n)^{1/2}\biggl(\frac{n^d}{\sg_n^2}\biggr)^{1/2}\biggl[\tilde\dl_{k,n}^{1/8}+\biggl(\frac{k^5(1\vee d\lm_n)^{2k}}n\biggr)^{1/12}\biggr]
+\biggl(\frac{n^d}{\sg_n^2}\biggr)^{3/4}\frac{J_n^{1/4}\lm_n^{1/2}}{n^{d/4}}.
\end{align}
\end{cor}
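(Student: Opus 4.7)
The plan is to feed crude, closed-form upper bounds for $\gm_{k,n}$ and $\tilde\rho_{k,n}$ into Theorem~\ref{thm:add-one} and then tidy the remaining deterministic residue using $d\ge 2$. The targets are
\[
\gm_{k,n}\le Ck(1\vee d\lm_n)^k/n^d \qquad\text{and}\qquad \tilde\rho_{k,n}\le CJ_n^{2/3}\gm_{2k,n}^{1/3}.
\]
Granted these, the bundle $\tilde\rho_{k,n}+J_n^{2/3}\gm_{k,n}^{1/3}$ in~\eqref{eq:add-one} is (via the trivial monotonicity $\gm_{k,n}\le\gm_{2k,n}$) at most $CJ_n^{2/3}\gm_{2k,n}^{1/3}\le CJ_n^{2/3}(k(1\vee d\lm_n)^{2k}/n)^{1/3}$, where the last step uses $d\ge 2$ so that $n^d\ge n$; after the $\lm_n^2$ prefactor and the outer fourth-root in~\eqref{eq:add-one} this becomes exactly the $(k^5(1\vee d\lm_n)^{2k}/n)^{1/12}$ contribution in~\eqref{eq:no_gm_rho} (with room to spare in the $k$-power). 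The deterministic leftover $J_n^{2/3}(\lm_n^2/n+\lm_n/n^d+\lm_n^3/n)$ is absorbed by the same term using $n^d\ge n$, the comparison $\lm_n^{1/2}\le(1\vee\lm_n)^{1/2}$, and the observation $(1\vee d\lm_n)^{2k}\ge 1$.

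For $\gm_{k,n}$, I would use a first-moment bound over paths. Because $\sg,\sg'$ are disjoint $(d-1)$-simplices (Hamming distance $d$), every connecting path has length $i\ge d$, and a fixed sequence of $i$ distinct $d$-simplices forming a path lies in $\X_n$ with probability $p_n^i$; hence $\gm_{k,n}\le\sum_{i=d}^k N_i\, p_n^i$, where $N_i$ counts paths of length $i$ from $\sg$ to $\sg'$ in $\cK_n$. The $d$-adjacency graph $G_d$ on $F_{d-1}(\cK_n)$ is vertex-transitive under $\mathrm{Sym}([n])$ with constant degree $d(n-d)$, and the stabilizer of $\sg$ acts transitively on its $\binom{n-d}{d}$ disjoint partners, so averaging the total walk-count $(d(n-d))^i$ from $\sg$ over that orbit yields $N_i\le (d(n-d))^i/\binom{n-d}{d}\le C_d\, d^i n^{i-d}$. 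Summing the resulting geometric series gives $\gm_{k,n}\le Ck(1\vee d\lm_n)^k/n^d$.

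For $\tilde\rho_{k,n}$, I would use a non-overlap/H\"older argument. A key geometric identity, valid for every $F\subset F_d(\cK_n)\setminus\{\tau\}$ and every $k\ge 1$, is
\[
B_k(\tau,\X_n^F+\tau)=B_k(\tau,\X_n^F-\tau)\cup\{\tau\};
\]
any path through $\tau$ in the $+\tau$-ball can be truncated at the opposite face of $\tau$ to give a path of no greater length in the $-\tau$-ball, while the ball is formed as a union over all faces of $\tau$. The $H$-Lipschitzness of $f$ then yields $|D_\tau f(B_k(\tau,\X_n^F))|\le H(w_{n,\tau},w'_{n,\tau})=:H_\tau$, and likewise $|D_{\tau'}f(B_k(\tau',\X_n^{F'}))|\le H_{\tau'}$. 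Let $E$ denote the event that some ball on the $\tau$-side shares a $d$-simplex with some ball on the $\tau'$-side; a standard locality argument shows that on $E^c$ the two add-one products depend on disjoint sub-families of the independent $(b,w)$-variables and are therefore independent, so only $E$ contributes to the covariance. H\"older with exponents $(3/2,3)$ then gives
\[
|\tilde\rho_{k,n}|\le \E[H_\tau^3 H_{\tau'}^3]^{2/3}\P(E)^{1/3}=\E[H_\tau^3]^{2/3}\E[H_{\tau'}^3]^{2/3}\P(E)^{1/3}\le CJ_n^{2/3}\gm_{2k,n}^{1/3},
\]
using independence of $H_\tau,H_{\tau'}$, the Cauchy--Schwarz bound $\E[H^3]\le\E[H^6]^{1/2}\le J_n^{1/2}$, and the fact that $E$ forces a path of length $\le 2k$ between a $(d-1)$-face of $\tau$ and a $(d-1)$-face of $\tau'$ (via concatenation through the shared $d$-simplex). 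The hard part will be the walk-counting step for $\gm_{k,n}$: the naive bound $N_i\le(d(n-d))^i$ has no $n$-decay, so extracting the factor $n^{-d}$ from the endpoint constraint $\sigma_i=\sg'$ (through the symmetry of $\cK_n$) is the crux of the argument.
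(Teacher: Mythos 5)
Your bound on $\gm_{k,n}$ and the final bookkeeping that turns Theorem~\ref{thm:add-one} into~\eqref{eq:no_gm_rho} are fine (your symmetry/averaging walk count even gives a factor $k$ in place of the paper's $k^{d+1}$, which is more than enough), and the identity $B_k(\tau,\X_n^F+\tau)=B_k(\tau,\X_n^F-\tau)\cup\{\tau\}$ is correct. The genuine gap is the step for $\tilde\rho_{k,n}$. The assertion that ``on $E^c$ the two add-one products depend on disjoint sub-families of the independent $(b,w)$-variables and are therefore independent, so only $E$ contributes to the covariance'' is not a valid argument: $E$ is a joint random event determined by both explorations, and restricting two dependent variables to such an event does not produce independence; concretely, $\cov(U\1_{E^c},V\1_{E^c})\neq 0$ in general and $\E[UV\1_{E^c}]$ does not factor. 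Even after conditioning on the two (unweighted) balls --- where a genuine conditional-independence statement is available --- one is left with the covariance of functions of the two balls themselves, and these balls are dependent random complexes; controlling that residual covariance is precisely the missing work. The paper does it scale by scale: Lemma~\ref{lem:Cao_Lem6.8} gives conditional independence of the radius-$k$ data given the radius-$(k-1)$ balls on the event $I_{k-1}$ that the balls are \emph{vertex}-disjoint, and Proposition~\ref{prop:tilrho_est} iterates this, paying $CJ_n^{2/3}\P(I_i^c)^{1/3}$ at each scale $i<k$; summing over scales is what produces the $k^5$ in~\eqref{eq:no_gm_rho}.

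A second, related problem is that your decoupling event $E$ (``the balls share a $d$-simplex'') is the wrong event even heuristically. Determining $B_k(\tau,\cdot)$ requires reading presence \emph{and absence} of every potential $d$-simplex $\sg\cup\{v\}$ with $\sg$ a $(d-1)$-simplex of the ball and $v\in[n]$ arbitrary; the two explorations therefore consult a common coordinate as soon as the balls share $d$ vertices (this is exactly where $d\ge2$ enters, via the sets $C(\cdot)$ in Lemma~\ref{lem:Cao_Lem6.8}), not only when they share a present $d$-simplex. The probability that the balls fail to be vertex-disjoint is of order $k^2(1\vee d\lm_n)^{2k}/n$ (Lemma~\ref{lem:nonintersect}) --- a decay of order $n^{-1}$, not the $n^{-d}$ you get from $\gm_{2k,n}$ --- so your claimed estimate $\tilde\rho_{k,n}\le CJ_n^{2/3}\gm_{2k,n}^{1/3}$ is both unproven and stronger than what the method delivers; the corollary only needs the weaker, correctly provable bound $\tilde\rho_{k,n}\le CJ_n^{2/3}\bigl(k^5(1\vee d\lm_n)^{2k}/n\bigr)^{1/3}$. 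Also, contrary to your closing remark, the path count for $\gm_{k,n}$ is the easy part; the decoupling behind $\tilde\rho_{k,n}$ is where the real difficulty of this corollary lies.
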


The following corollary provides useful sufficient conditions to show the CLT for $f(\X_n)$ and in particular, Condition (3) in the below corollary allows for a more robust truncation argument when $\lambda_n \leq 1/d$ for large enough $n$.
\begin{cor}\label{cor:suff_CLT}
Let $d\ge2$, and let $\X_n$ be the randomly weighted $d$-complex as defined above.
Let $f$ be a real-valued function on weighted $d$-complexes in $\cK_n$ that is invariant under weighted isomorphisms and $H$-Lipschitz with $J_n < \infty$.
Let $\tilde\dl_{k,n}$ be the same as in Theorem~\ref{thm:add-one}.
Suppose that $\sg_n^2=\Om(n^d)$.
Furthermore, assume that any of the following conditions hold$:$
\begin{enumerate}
\item $J_n=o(n^\eps)$, $\lm_n=o(n^\eps)$ for any $\eps>0$ and $\tilde\dl_{k,n}=o(n^{-\dl})$ for some $k\in\N$ and $\dl>0;$
\item $J_n=O(1)$, $\lm\coloneqq\limsup_{n\to\infty}\lm_n<\infty$ and there exists a sequence $\{k(n)\}_{n\in\N}\subset\N$ such that
\[
\limsup_{n\to\infty}\frac{k(n)}{\log n}<\frac 1{2\log(1\vee d\lm)}
\quad\text{and}\quad
\lim_{n\to\infty}\tilde\dl_{k(n),n}=0.
\]
\item $J_n = O(1)$, $\lambda_n \leq 1/d$ for all large $n$ and there exists a sequence $\{k(n)\}_{n\in\N}\subset\N$ such that
\[
k(n) = o(n^{1/5})
\quad\text{and}\quad
\lim_{n\to\infty}\tilde\dl_{k(n),n}=0.
\]

\end{enumerate}
Then,
\[
\lim_{n\to\infty}d_K\biggl(\cL\biggl(\frac{f(\X_n)-\E[f(\X_n)]}{\sg_n}\biggr),\cN(0,1)\biggr)=0.
\]
Here we have used the small $o$, the big $O$ and the big $\Omega$ notations ($a_n = \Omega(b_n)$ if $\liminf_{n\to\infty} a_n / b_n > 0$, for two positive sequences $a_n$ and $b_n$).
\end{cor}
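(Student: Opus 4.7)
The plan is to apply Corollary~\ref{cor:no_gm_rho} and verify that the right-hand side of~\eqref{eq:no_gm_rho} tends to zero under each of the three conditions. The assumption $\sg_n^2 = \Om(n^d)$ immediately gives $n^d/\sg_n^2 = O(1)$, so it suffices to show that
\[
A_{k,n} := J_n^{1/6}(1\vee\lm_n)^{1/2}\tilde\dl_{k,n}^{1/8} + J_n^{1/6}(1\vee\lm_n)^{1/2}\biggl(\frac{k^5(1\vee d\lm_n)^{2k}}{n}\biggr)^{1/12} + \frac{J_n^{1/4}\lm_n^{1/2}}{n^{d/4}}
\]
can be made $o(1)$ by a suitable choice of $k=k(n)$. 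Note that in the third term, the hypothesis $d\ge2$ combined with the polynomial slack between $J_n,\lm_n$ and $n^{d/4}$ in each of the three conditions always forces this term to vanish, so the analysis will focus on the first two terms.

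Under condition~(1), I fix the constant $k$ supplied by the hypothesis. Since $J_n$ and $\lm_n$ grow slower than any power of $n$, while $\tilde\dl_{k,n} \le C n^{-\dl}$, taking $\eps$ sufficiently small compared to $\dl/8$ shows that the first summand is $o(n^{\eps - \dl/8}) \to 0$. The second summand is $o(n^{\eps - 1/12})\to 0$ because $k$ is fixed. This is the easy case.

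Under condition~(2), I take $k = k(n)$ and let $L := \log(1\vee d\lm)$, where $\lm = \limsup_n \lm_n$. The hypothesis $J_n = O(1)$ and boundedness of $\lm_n$ make the prefactor $J_n^{1/6}(1\vee\lm_n)^{1/2}$ bounded, and $\tilde\dl_{k(n),n}^{1/8}\to 0$ by assumption. The critical step is bounding
\[
\frac{k(n)^5 (1\vee d\lm_n)^{2k(n)}}{n}.
\]
Fix any $\eta > 0$ with $\limsup k(n)/\log n < (1-\eta)/(2L)$ (using the strict inequality in the hypothesis). For $n$ large, $2k(n)\log(1\vee d\lm_n) \le (1-\eta/2)\log n$, hence $(1\vee d\lm_n)^{2k(n)} \le n^{1-\eta/2}$. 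Combined with $k(n)^5 = o(n^{\eta/4})$ (since $k(n) = O(\log n)$), the ratio is $o(n^{-\eta/4})\to 0$. The delicate handling of the limsup against a non-strict bound on $\lm_n$ is the main obstacle here, and is resolved by introducing the small slack $\eta$. The case $L=0$ (i.e.\ $d\lm \le 1$) is even easier since the exponential factor is bounded by $1$.

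Under condition~(3), the bound $\lm_n \le 1/d$ gives $1\vee d\lm_n = 1$, so $(1\vee d\lm_n)^{2k(n)} = 1$, and the second summand reduces to $k(n)^{5/12}/n^{1/12}$. The hypothesis $k(n) = o(n^{1/5})$ yields $k(n)^{5/12} = o(n^{1/12})$, so this term vanishes; the prefactor is bounded since $J_n = O(1)$ and $(1\vee\lm_n)^{1/2} \le 1$. The first summand vanishes because $\tilde\dl_{k(n),n} \to 0$. Note that this case was designed precisely to allow $k(n)$ to grow much faster than $\log n$ when $\lm_n$ is subcritical, which is useful for the truncation argument mentioned in the paper. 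Combining the three cases yields the conclusion.
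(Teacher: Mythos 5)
Your proposal is correct and follows essentially the same route as the paper: apply Corollary~\ref{cor:no_gm_rho}, use $\sg_n^2=\Om(n^d)$ so that $n^d/\sg_n^2=O(1)$, and reduce cases (2) and (3) to checking $k(n)^5(1\vee d\lm_n)^{2k(n)}/n\to0$ (case (1) being immediate); your write-up simply supplies the details the paper leaves as ``easily verified''. One tiny caveat: in the subcase $\log(1\vee d\lm)=0$ of condition (2), the bound $\lm\le1/d$ is only a limsup, so $\lm_n$ may exceed $1/d$ for every $n$ and the factor $(1\vee d\lm_n)^{2k(n)}$ need not be bounded by $1$; but the same slack argument you already use for $L>0$ (eventually $\lm_n\le\lm+\dl$ with $\dl$ small, together with $k(n)=O(\log n)$) disposes of it, so this is a one-line fix rather than a gap.
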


\begin{rem}[Variance lower bounds]
For the conclusion of Corollary~\ref{cor:suff_CLT}, we need to check the assumption $\sg_n^2=\Om(n^d)$.
Applying \cite[Corollary~2.4]{LP17} to our setting, we have the following lower bound on $\sg_n^2$.
\begin{align*}
\sg_n^2=\var(f(\X_n))
&\ge\sum_{\tau\in F_d(\cK_n)}\E[\E[\Dl_\tau f(\X_n)\mid(B'_n,W'_n)]^2]\\
&=\sum_{\tau\in F_d(\cK_n)}\E[\E[\Dl_\tau f(\X_n)\mid(b'_{n,\tau},w'_{n,\tau})]^2]\\
&=\binom n{d+1}\E[\E[\1_{\{b_{n,\tau}+b'_{n,\tau} \geq 1\}}\Dl_\tau f(\X_n)\mid(b'_{n,\tau},w'_{n,\tau})]^2] \\
& \geq \binom n{d+1}\E[\E[\1_{\{b_{n,\tau}+b'_{n,\tau} = 1\}}\Dl_\tau f(\X_n)\mid(b'_{n,\tau},w'_{n,\tau})]^2].
\end{align*}
The above lower bound can be further simplified in the case of unweighted $d$-complexes in $\cK_n$, i.e., when the statistic does not depend on $w_{n,\tau}$ or equivalently $w_{\cdot,\cdot} \equiv 1$. In this case, by using that $b'_{n,\tau} \in \{0,1\}$, we derive that
\begin{align*}
\E[\1_{\{b_{n,\tau}+b'_{n,\tau} = 1\}}\Dl_\tau f(\X_n)\mid b'_{n,\tau}]^2
&=\sum_{i=0,1}\E[\1_{\{b_{n,\tau}+b'_{n,\tau} = 1\}}\Dl_\tau f(\X_n)\mid b'_{n,\tau}=i]^2\1_{\{b'_{n,\tau}=i\}}\\
&=\sum_{i=0,1}\E[\1_{\{b_{n,\tau}=1-i\}}\Dl_\tau f(\X_n)\mid b'_{n,\tau}=i]^2\1_{\{b'_{n,\tau}=i\}}\\
&=\sum_{i=0,1}\E[\1_{\{b_{n,\tau}=1-i\}}D_\tau f(\X_n)\mid b'_{n,\tau}=i]^2\1_{\{b'_{n,\tau}=i\}}\\
&=\sum_{i=0,1}\P(b_{n,\tau}=1-i)\E[D_\tau f(\X_n)]^2\1_{\{b'_{n,\tau}=i\}}
\end{align*}
and so
\begin{equation}
\label{eq:var_lb_unweight}
\sigma_n^2 \geq 2\binom n{d+1} p_n(1-p_n)\E[D_{\tau}f(\X_n)]^2.
\end{equation}
\end{rem}
\begin{rem}[Variance upper bounds]
On the other hand, we also note that the classical Efron--Stein inequality (see, e.g.,~\cite[Theorem~3.1]{BLM13}) yields the following upper bound on $\sg_n^2$.
\begin{align*}
\sg_n^2=\var(f(\X_n))
&\le\frac12\sum_{\tau\in F_d(\cK_n)}\E[\Dl_\tau f(\X_n)^2]\\
&\le\frac12\sum_{\tau\in F_d(\cK_n)}\E[\1_{\{b_{n,\tau} \vee b'_{n,\tau}=1\}}H(w_\tau,w'_\tau)^2]\\
&\le\frac12\sum_{\tau\in F_d(\cK_n)}\P(b_{n,\tau} \vee b'_{n,\tau}=1)\E[H(w_\tau,w'_\tau)^2]\\
&\le p_n\sum_{\tau\in F_d(\cK_n)}\E[H(w_\tau,w'_\tau)^2]\\
&\le n^d\lm_nJ_n^{1/3}.
\end{align*}
For the last line, we used H\"older's inequality.
\end{rem}
%
\section{Applications}
\label{sec:applns}

In this section, we consider some combinatorial optimization problems on random simplicial complexes, and apply the main results in Section~\ref{sec:model-result} to obtain their CLTs. In particular, we prove a CLT for nearest face-weights (Theorem~\ref{thm:CLT_NN}) in Section~\ref{sec:nearfaceweight} and a normal approximation for local statistics in Section~\ref{sec:localstat}.

\subsection{Nearest neighbor weights - Proof of Theorem~\ref{thm:CLT_NN}}
\label{sec:nearfaceweight}
In this section, we shall prove one of our main applications, the CLT for nearest neighbour weights - Theorem~\ref{thm:CLT_NN}. Recall the definition of the nearest face-weight $\text{NN}_n(\sigma)$ and total nearest face-weight $\text{NN}_n(\K_n)$ from \eqref{e:nearfaceweight} and \eqref{e:totnearfaceweight}, respectively. We now give some basic observations before going to the proof.
We cannot apply Corollary~\ref{cor:suff_CLT} to $\text{NN}_n(\K_n)$ directly because $\lm_n=n$ in this setting.
Hence, we use a truncation argument.

Let $\a>0$ be fixed.
For any $\sg\in F_{d-1}(\cK_n)$, we set
\[
\text{NN}_n^\a(\sg)
\coloneqq\min_{\sg\subset\tau\in F_d(\cK_n)}(w_{n,\tau}\wedge\a) = \text{NN}_n(\sigma) \wedge \alpha.
\]
Here, $a\wedge b$ denotes $\min\{a,b\}$ for any $a,b\in\R$. 
We then define the \textit{$\a$-diluted nearest neighbor weight} $\text{NN}_n^\a(\K_n)$ of $\K_n$ by
\[
\text{NN}_n^\a(\K_n)\coloneqq\sum_{\sg\in F_{d-1}(\cK_n)}\text{NN}_n^\a(\sg).
\]
Clearly, we have $\text{NN}_n^\a(\K_n)\nearrow\text{NN}_n(\K_n)$ as $\a\to\infty$.
Let $K_n^\a$ be the $d$-complex in $\cK_n$ consisting of $\cK_n^{(d-1)}$ and all $d$-simplices whose weights are at most $\a$.
Additionally, let $\K_n^\a$ denote the weighted $d$-complex $(K_n^\a,\{w_{n,\tau}\colon\tau\in F_d(K_n^\a)\})$ in $\cK_n$.
Note that $\text{NN}_n^\a(\K_n)$ is a function of $\K_n^\a$.
Note that for any $\sg\in F_{d-1}(\cK_n)$,
\[
\text{NN}_n^\a(\sg) = \begin{cases}
\min_{\sg\subset\tau\in F_d(K_n^\a)}(w_{n,\tau}\wedge\a)    &\text{if $\deg_{K_n^\a}(\sg)>0$,}\\
\a                         &\text{otherwise.}
\end{cases}
\]
Hence, given a weighted $d$-complex $\X=(X,\{w_\tau\colon\tau\in F_d(X)\})$ in $\cK_n$ and $\sg\in F_{d-1}(\cK_n)$, we set
\[
f^\a(\X,\sg)\coloneqq\begin{cases}
\min_{\sg\subset\tau\in F_d(X)}(w_\tau\wedge\a)    &\text{if $\deg_{X}(\sg)>0$,}\\
\a                         &\text{otherwise,}
\end{cases}
\]
and define
\[
f^\a(\X)\coloneqq\sum_{\sg\in F_{d-1}(\cK_n)}f^\a(\X,\sg).
\]
Then, $\text{NN}_n^\a(\K_n)=f^\a(\K_n^\a)$.
Furthermore, $f^\a$ is Lipschitz.
Indeed, for any $B=\{b_\tau\colon\tau\in F_d(\cK_n)\}$, $W=\{w_\tau\colon\tau\in F_d(\cK_n)\}$, $B'=\{b'_\tau\colon\tau\in F_d(\cK_n)\}$, and $W'=\{w'_\tau\colon\tau\in F_d(\cK_n)\}$,
\begin{equation}\label{eq:Lip_NNlm}
\bigl|f^\a\bigl(\X_{(B,W)}\bigr)-f^\a\bigl(\X_{(B',W')}\bigr)\bigr|
\le\sum_{\substack{\tau\colon b_\tau\vee b'_\tau=1\\b_\tau\neq b'_\tau}}(d+1)\a.
\end{equation}

Now, using a fixed $\tau\in F_d(\cK_n)$, we set
\[
p_{n,\a} \coloneqq\P(w_{n,\tau}\le\a)=1-e^{-\a/n}
\quad\text{and}\quad
D_n^\a\coloneqq\P(w_{n,\tau}\in\cdot\mid w_{n,\tau}\le\a).
\]
Let $B_n^\a=\{b_{n,\tau}^\a\colon\tau\in F_d(\cK_n)\}$ and $W_n^\a=\{w_{n,\tau}^\a\colon\tau\in F_d(\cK_n)\}$ be mutually independent random variables such that $b_{n,\tau}^\a\sim\ber(p_{n,\a})$ and $w_{n,\tau}^\a\sim D_n^\a$ for every $\tau\in F_d(\cK_n)$.
We denote by $\X_n^\a$ the randomly weighted $d$-complex generated by $(B_n^\a,W_n^\a)$.
Then, one can easily verify that $\X_n^\a$ and $\K_n^\a$ have the same distribution.
In particular, $\text{NN}_n^\a(\K_n)=f^\a(\K_n^\a)$ has the same distribution as that of $f^\a(\X_n^\a)$.

With the above observation, we will show the CLT for $\text{NN}_n^{\a_n}(\K_n)$ with a sufficiently large $\a_n$, tending to infinity as $n\to\infty$, by applying Corollary~\ref{cor:suff_CLT}.
The following lemma will then help us lift the result into Theorem~\ref{thm:CLT_NN}. 
\begin{lem}\label{lem:NN-NNalpha}
Let $C_2>0$ be fixed.
Then, there exists a constant $C_1>0$ such that for $\a_n=C_1\log n$ and sufficiently large $n$,
\[
\P(\text{NN}_n(\K_n)=\text{NN}_n^{\a_n}(\K_n))\ge1-3n^{-C_2}.
\]
\end{lem}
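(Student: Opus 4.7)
My plan is to reduce the equality event to a pointwise event and then apply a union bound together with a direct computation for the exponential distribution. Observe that $\text{NN}_n^{\a_n}(\sg) = \text{NN}_n(\sg) \wedge \a_n \le \text{NN}_n(\sg)$ for every $\sg \in F_{d-1}(\cK_n)$; summing over $\sg$ and using that the totals agree if and only if all individual terms agree (which in turn holds iff $\text{NN}_n(\sg) \le \a_n$ for every $\sg$), I obtain the identity
\[
\{\text{NN}_n(\K_n) \neq \text{NN}_n^{\a_n}(\K_n)\} \;=\; \bigcup_{\sg \in F_{d-1}(\cK_n)} \{\text{NN}_n(\sg) > \a_n\}.
\]

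Next, I would estimate the single-simplex probability. For a fixed $\sg \in F_{d-1}(\cK_n)$ there are exactly $n-d$ $d$-simplices $\tau$ containing $\sg$ (obtained by adjoining one of the $n-d$ vertices not in $\sg$), and the associated weights are i.i.d.\ $\text{Exp}(1/n)$ with $\P(w_{n,\tau} > \a_n) = e^{-\a_n/n} = n^{-C_1/n}$. Independence therefore yields $\P(\text{NN}_n(\sg) > \a_n) = n^{-C_1(n-d)/n}$, and a union bound over the $\binom{n}{d} \le n^d$ choices of $\sg$ gives
\[
\P\bigl(\text{NN}_n(\K_n) \neq \text{NN}_n^{\a_n}(\K_n)\bigr) \;\le\; n^d \cdot n^{-C_1(n-d)/n} \;=\; n^{d - C_1(n-d)/n}.
\]
For $n \ge 2d$ one has $(n-d)/n \ge 1/2$, so taking $C_1 \ge 2(C_2 + d)$ makes the exponent at most $-C_2$ for all sufficiently large $n$, and the resulting bound $n^{-C_2}$ is certainly no larger than $3n^{-C_2}$.

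I do not foresee a substantive obstacle here: the argument reduces to one set-theoretic identity and one exponential tail computation. The only point requiring care is the calibration of $C_1$, which must be taken large enough relative to $C_2$ and $d$ to absorb both the combinatorial prefactor $n^d$ from the union bound and the $(n-d)/n$ deficit in the exponent (which is only slightly less than $1$ for finite $n$). The slack factor of $3$ on the right-hand side of the lemma leaves ample room for any such finite-$n$ correction, so no sharper tail estimate is needed.
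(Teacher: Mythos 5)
Your proof is correct and follows essentially the same route as the paper: reduce the event $\{\text{NN}_n(\K_n)\neq\text{NN}_n^{\a_n}(\K_n)\}$ to the union over $(d-1)$-simplices $\sg$ of the events $\{\text{NN}_n(\sg)>\a_n\}$ (phrased in the paper as $\deg_{K_n^{\a_n}}(\sg)=0$) and then apply a union bound over the $\binom nd$ choices of $\sg$. The only difference is that you compute $\P(\text{NN}_n(\sg)>\a_n)=e^{-\a_n(n-d)/n}$ exactly from the i.i.d.\ exponential weights, whereas the paper bounds $\P(\bin(n-d,p_{n,\a_n})=0)$ via a Chernoff estimate; your direct computation is cleaner and lets you take $C_1=2(C_2+d)$ without needing the factor $3$ slack.
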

\begin{proof}
We first note that $\text{NN}_n(\K_n)\neq\text{NN}_n^{\a_n}(\K_n)$ implies that there exists a $\sg\in F_{d-1}(\cK_n)$ such that $\deg_{K_n^{\a_n}}(\sg)=0$.
Therefore,
\[
\P(\text{NN}_n(\K_n)\neq\text{NN}_n^{\a_n}(\K_n))
\le\P\biggl(\bigcup_{\sg\in F_{d-1}(\cK_n)}\{\deg_{K_n^{\a_n}}(\sg)=0\}\biggr)
\le\binom nd\P(\deg_{K_n^{\a_n}}([d])=0).
\]
Furthermore, since $\deg_{K_n^{\a_n}}([d])\sim\bin(n-d,p_{n,\a_n})$, the Chernoff bound (e.g.~\cite[Theorem~12.6]{BHK20}) yields
\begin{equation}\label{eq:Chernoff}
\P(\deg_{K_n^{\a_n}}([d])=0)
\le\P(\deg_{K_n^{\a_n}}([d])\le(n-d)p_{n,\a_n}/2)
\le3e^{-(n-d)p_{n,\a_n}/32}.
\end{equation}
Now, we set $C_1\coloneqq 64(C_2+d)$.
Then, since $p_{n,\a_n}=1-e^{-\a_n/n}\ge\a_n/\{2(n-d)\}$ for sufficiently large $n$, the right-hand side of~\eqref{eq:Chernoff} is bounded above by $3n^{-(C_2+d)}$.
The conclusion follows by combining the above estimates.
\end{proof}
Now, we state and prove variance asymptotics.
\begin{lem}\label{lem:var_NN}
It holds that
\[
\var(\text{NN}_n(\K_n))\sim \Big( 1+ \frac{d}{2}\Big)\binom nd.
\]
\end{lem}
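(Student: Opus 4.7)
The plan is to expand the variance as
\[
\var(\text{NN}_n(\K_n))=\sum_{\sg\in F_{d-1}(\cK_n)}\var(\text{NN}_n(\sg))+\sum_{\substack{\sg\ne\sg'\in F_{d-1}(\cK_n)}}\cov(\text{NN}_n(\sg),\text{NN}_n(\sg')),
\]
and analyze each piece explicitly. The key structural observation is that $\text{NN}_n(\sg)$ and $\text{NN}_n(\sg')$ are functions of disjoint collections of weights, and hence independent, unless $\sg$ and $\sg'$ share at least one common $d$-simplex; since a $d$-simplex containing both must contain $\sg\cup\sg'$, such a shared $d$-simplex exists if and only if $|\sg\cap\sg'|=d-1$, in which case the unique common $d$-simplex is $\tau^{\ast}\coloneqq\sg\cup\sg'$.

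For the diagonal contribution, I would use that each $\sg\in F_{d-1}(\cK_n)$ is contained in exactly $n-d$ distinct $d$-simplices, so $\text{NN}_n(\sg)$ is the minimum of $n-d$ i.i.d.\ $\text{Exp}(1/n)$ variables and hence $\text{Exp}((n-d)/n)$, giving $\var(\text{NN}_n(\sg))=(n/(n-d))^2$. Summing over the $\binom{n}{d}$ choices of $\sg$ yields a contribution $\sim\binom{n}{d}$ as $n\to\infty$.

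For the off-diagonal contribution, I would restrict to pairs with $|\sg\cap\sg'|=d-1$, of which there are $\binom{n}{d}\,d(n-d)$ as ordered pairs. Writing $W=w_{n,\tau^{\ast}}$, $A=\min_{\sg\subset\tau\ne\tau^{\ast}}w_{n,\tau}$, $B=\min_{\sg'\subset\tau\ne\tau^{\ast}}w_{n,\tau}$, we have $W,A,B$ independent with $W\sim\text{Exp}(1/n)$ and $A,B\sim\text{Exp}((n-d-1)/n)$, and $\text{NN}_n(\sg)=\min(W,A)$, $\text{NN}_n(\sg')=\min(W,B)$. Conditioning on $W$, the identity $\E[\min(w,A)]=\frac{n}{n-d-1}(1-e^{-(n-d-1)w/n})$ and the Laplace transform $\E[e^{-\mu W}]=(1+n\mu)^{-1}$ reduce the covariance to a short algebraic simplification, yielding
\[
\cov(\text{NN}_n(\sg),\text{NN}_n(\sg'))=\frac{n^2}{(n-d)^2(2(n-d)-1)}\sim\frac{1}{2n}.
\]
Multiplying by the number of ordered adjacent pairs gives an off-diagonal contribution $\sim\binom{n}{d}\cdot d/2$, so adding the two pieces produces the announced asymptotic $(1+d/2)\binom{n}{d}$.

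The only nontrivial step is the algebraic collapse in the covariance computation; the remaining combinatorial counts and independence arguments are routine consequences of the fact that the i.i.d.\ weight family indexed by $F_d(\cK_n)$ makes $\text{NN}_n(\sg)$ measurable with respect to the $n-d$ weights on $d$-simplices containing $\sg$.
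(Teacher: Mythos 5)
Your proposal is correct and follows essentially the same route as the paper: the same variance decomposition, the observation that only pairs $\sg,\sg'$ sharing $d-1$ vertices are correlated, the identification $\text{NN}_n(\sg)\sim\text{Exp}((n-d)/n)$, and the conditioning on the common weight $w_{n,\sg\cup\sg'}$ to compute the covariance (your exact closed form $\frac{n^2}{(n-d)^2(2(n-d)-1)}$ agrees with the paper's expansion and indeed gives $\sim 1/(2n)$). The only difference is cosmetic: the paper expands the two expectations to order $1/n$ rather than simplifying to a closed form.
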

\begin{proof}
Noting that
\[
\text{NN}_n(\K_n)=\sum_{\sg\in F_{d-1}(\cK_n)}\text{NN}_n(\sg),
\]
we can write
\[
\var(\text{NN}_n(\K_n))
=\sum_{\sg\in F_{d-1}(\cK_n)}\var(\text{NN}_n(\sg))+\sum_{\substack{\sg,\sg'\in F_{d-1}(\cK_n)\\\dim(\sg\cup\sg')=d}}\cov(\text{NN}_n(\sg),\text{NN}_n(\sg')).
\]
Here, $\text{NN}_n(\sg)\sim\text{Exp}((n-d)/n)$ since $\text{NN}_n(\sg)$ is the minimum of $(n-d)$ i.i.d.\ random variables sampled from $\text{Exp}(1/n)$, which implies that
\[
\E[\text{NN}_n(\sg)]=\frac n{n-d}
\quad\text{and}\quad
\var(\text{NN}_n(\sg))=\biggl(\frac n{n-d}\biggr)^2.
\]
Suppose that we prove that for fixed $\sg,\sg'\in F_{d-1}(\cK_n)$ such that $\dim(\sg\cup\sg')=d$,
\begin{equation}\label{eq:cov_NN}
\cov(\text{NN}_n(\sg),\text{NN}_n(\sg'))\sim\frac1{2n}.
\end{equation}
This immediately gives that
$$ \sum_{\substack{\sg,\sg'\in F_{d-1}(\cK_n)\\\dim(\sg\cup\sg')=d}}\cov(\text{NN}_n(\sg),\text{NN}_n(\sg')) \sim \binom{n}{d}d(n-d)\frac{1}{2n},$$
which can be substituted in the above variance identity to complete the proof. Now, define
\begin{align*}
\widetilde{\text{NN}}_n(\sg)\coloneqq\min\{w_{n,\tau}\mid\sg\subset\tau\in F_d(\cK_n)\setminus\{\sg\cup\sg'\}\}
\shortintertext{and}
\widetilde{\text{NN}}_n(\sg')\coloneqq\min\{w_{n,\tau}\mid\sg'\subset\tau\in F_d(\cK_n)\setminus\{\sg\cup\sg'\}\}.
\end{align*}
Note that $\widetilde{\text{NN}}_n(\sg)$ and $\widetilde{\text{NN}}_n(\sg')$ are independent.
Furthermore, we have
\[
\text{NN}_n(\sg)=\widetilde{\text{NN}}_n(\sg)\wedge w_{n,\sg\cup\sg'}
\quad\text{and}\quad
\text{NN}_n(\sg')=\widetilde{\text{NN}}_n(\sg')\wedge w_{n,\sg\cup\sg'}.
\]
Therefore, from the conditional independence of $\text{NN}_n(\sg)$ and $\text{NN}_n(\sg')$ given $w_{n,\sg\cup\sg'}$,
\begin{align*}
&\E[\text{NN}_n(\sg) \times \text{NN}_n(\sg')]\\
&=\E\bigl[\E[(\widetilde{\text{NN}}_n(\sg)\wedge w_{n,\sg\cup\sg'})(\widetilde{\text{NN}}_n(\sg')\wedge w_{n,\sg\cup\sg'})\mid w_{n,\sg\cup\sg'}]\bigr]\\
&=\E\bigl[\E[\widetilde{\text{NN}}_n(\sg)\wedge w_{n,\sg\cup\sg'}\mid w_{n,\sg\cup\sg'}]\E[\widetilde{\text{NN}}_n(\sg')\wedge w_{n,\sg\cup\sg'}\mid w_{n,\sg\cup\sg'}]\bigr].
\end{align*}
Since $\widetilde{\text{NN}}_n(\sg)\sim\text{Exp}((n-d-1)/n)$, a simple calculation yields
\begin{align*}
\E\bigl[\widetilde{\text{NN}}_n(\sg)\wedge w_{n,\sg\cup\sg'}\mid w_{n,\sg\cup\sg'}\bigr]
&=\int_0^\infty(x\wedge w_{n,\sg\cup\sg'})\frac{n-d-1}n\exp\biggl(-\frac{n-d-1}nx\biggr)\,dx\\
&=\frac n{n-d-1}\biggl(1-\exp\biggl(-\frac{n-d-1}nw_{n,\sg\cup\sg'}\biggr)\biggr).
\end{align*} 
A similar equation holds for $\E\bigl[\widetilde{\text{NN}}_n(\sg')\wedge w_{n,\sg\cup\sg'}\mid w_{n,\sg\cup\sg'}\bigr]$.
Combining the above equations, we obtain
\begin{align}\label{eq:ENN}
&\E[\text{NN}_n(\sg)\text{NN}_n(\sg')]\nonumber\\
&=\biggl(\frac n{n-d-1}\biggr)^2\E\biggl[\biggl(1-\exp\biggl(-\frac{n-d-1}nw_{n,\sg\cup\sg'}\biggr)\biggr)^2\biggr]\nonumber\\
&=\biggl(\frac n{n-d-1}\biggr)^2\biggl(1-2\E\biggl[\exp\biggl(-\frac{n-d-1}nw_{n,\sg\cup\sg'}\biggr)\biggr]+\E\biggl[\exp\biggl(-2\frac{n-d-1}nw_{n,\sg\cup\sg'}\biggr)\biggr]\biggr)\nonumber\\
&=\biggl(\frac n{n-d-1}\biggr)^2\biggl(1-2\frac{1/n}{1/n+(n-d-1)/n}+\frac{1/n}{1/n+2(n-d-1)/n}\biggr)\nonumber\\
&=1+\frac{2d+1/2}n+O\biggl(\frac1{n^2}\biggr).
\end{align}
On the other hand,
\begin{equation}\label{eq:ENEN}
\E[\text{NN}_n(\sg)]\E[\text{NN}_n(\sg')]
=\biggl(\frac n{n-d}\biggr)^2
=1+\frac{2d}n+O\biggl(\frac1{n^2}\biggr).
\end{equation}
Thus,~\eqref{eq:cov_NN} follows from~\eqref{eq:ENN} and~\eqref{eq:ENEN}, which completes the proof.
\end{proof}
From Lemmas~\ref{lem:NN-NNalpha} and~\ref{lem:var_NN}, we have the following.
\begin{lem}\label{lem:varNN-varNNlm}
There exists a constant $C_1>0$ such that for $\a_n=C_1\log n$,
\[
\lim_{n\to\infty}\frac{\var(\text{NN}_n^{\a_n}(\K_n))}{\var(\text{NN}_n(\K_n))}=1.
\]
\end{lem}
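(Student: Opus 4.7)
The plan is to compare $\text{NN}_n(\K_n)$ and $\text{NN}_n^{\a_n}(\K_n)$ in $L^2$ directly and then convert this to a variance comparison via Minkowski's inequality on standard deviations.

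Set $D_n \coloneqq \text{NN}_n(\K_n) - \text{NN}_n^{\a_n}(\K_n)$. Since $\text{NN}_n^{\a_n}(\sg) = \text{NN}_n(\sg)\wedge\a_n$ for every $\sg\in F_{d-1}(\cK_n)$, we have
\[
D_n = \sum_{\sg \in F_{d-1}(\cK_n)} (\text{NN}_n(\sg) - \a_n)_+ \ge 0.
\]
Because $\sqrt{\var(\cdot)}$ is an $L^2$-seminorm, Minkowski's inequality yields
\[
\bigl|\sqrt{\var(\text{NN}_n(\K_n))} - \sqrt{\var(\text{NN}_n^{\a_n}(\K_n))}\bigr| \le \sqrt{\var(D_n)} \le \sqrt{\E[D_n^2]}.
\]
By Lemma~\ref{lem:var_NN}, $\var(\text{NN}_n(\K_n)) = \Theta(n^d)$, so it suffices to show that $\E[D_n^2] = o(n^d)$ for a suitably large $C_1$.

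Next, I would expand the square and apply Cauchy--Schwarz term by term. Using that $\text{NN}_n(\sg)$ is equidistributed over $\sg \in F_{d-1}(\cK_n)$,
\[
\E[D_n^2] = \sum_{\sg,\sg'\in F_{d-1}(\cK_n)} \E\bigl[(\text{NN}_n(\sg)-\a_n)_+(\text{NN}_n(\sg')-\a_n)_+\bigr] \le \binom{n}{d}^{\!2}\E\bigl[(\text{NN}_n(\sg)-\a_n)_+^{2}\bigr].
\]
For each $\sg$, $\text{NN}_n(\sg)$ is the minimum of $n-d$ i.i.d.\ $\text{Exp}(1/n)$ weights, so $\text{NN}_n(\sg) \sim \text{Exp}((n-d)/n)$. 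A direct integration against this density gives
\[
\E\bigl[(\text{NN}_n(\sg)-\a_n)_+^{2}\bigr] = 2\Bigl(\tfrac{n}{n-d}\Bigr)^{\!2} e^{-(n-d)\a_n/n} = O(n^{-C_1}),
\]
since $\a_n = C_1\log n$ and $(n-d)/n \to 1$.

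Combining the two displays, $\E[D_n^2] = O(n^{2d - C_1})$. Choosing $C_1$ large enough --- strictly greater than $d$ for the tail estimate, while also satisfying the requirement $C_1 = 64(C_2+d)$ of Lemma~\ref{lem:NN-NNalpha} for some (any) fixed $C_2>0$ --- forces $\E[D_n^2] = o(n^d)$. Dividing the Minkowski bound by $\sqrt{\var(\text{NN}_n(\K_n))}$ and using Lemma~\ref{lem:var_NN} gives $\var(\text{NN}_n^{\a_n}(\K_n))/\var(\text{NN}_n(\K_n)) \to 1$, as desired. There is no serious obstacle here: the only coordination issue is to fix one $C_1$ compatible with both the probability bound of Lemma~\ref{lem:NN-NNalpha} and the $L^2$ tail bound above, which is resolved by taking $C_1$ to be the maximum of the two constraints; alternatively, one could replace the direct tail computation by a fourth-moment/Cauchy--Schwarz argument $\E[D_n^2] \le \sqrt{\E[D_n^4]\,\P(A_n^c)}$ using Lemma~\ref{lem:NN-NNalpha} on $A_n = \{\text{NN}_n(\K_n)=\text{NN}_n^{\a_n}(\K_n)\}$, but the $L^2$ route is slightly cleaner.
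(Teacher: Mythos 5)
Your proof is correct, and while it shares the paper's outer skeleton --- Minkowski's inequality on standard deviations, reduction to showing $\E[(\text{NN}_n(\K_n)-\text{NN}_n^{\a_n}(\K_n))^2]=o(n^d)$, then Lemma~\ref{lem:var_NN} --- the key estimate is obtained by a genuinely different and more elementary route. The paper bounds the second moment of the difference via Cauchy--Schwarz against the event $\{\text{NN}_n(\K_n)\neq\text{NN}_n^{\a_n}(\K_n)\}$, importing the probability bound $3n^{-C_2}$ from Lemma~\ref{lem:NN-NNalpha} (itself a Chernoff bound on degrees in $K_n^{\a_n}$) together with the fourth-moment bound $\E[\text{NN}_n(\K_n)^4]^{1/2}\le Cn^{2d}$, which forces the specific choice $C_2=2(d+1)$, hence $C_1=64(3d+2)$. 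You instead integrate the exponential tail exactly: since $\text{NN}_n^{\a_n}(\sg)=\text{NN}_n(\sg)\wedge\a_n$ and $\text{NN}_n(\sg)\sim\text{Exp}((n-d)/n)$, termwise Cauchy--Schwarz gives $\E[D_n^2]\le\binom{n}{d}^{2}\cdot2\bigl(\tfrac{n}{n-d}\bigr)^{2}e^{-(n-d)\a_n/n}=O(n^{2d-C_1})$, so every $C_1>d$ works and Lemma~\ref{lem:NN-NNalpha} is not needed for this lemma at all (your extra constraint $C_1=64(C_2+d)$ is only relevant for the downstream combination with Lemma~\ref{lem:NN-NNalpha} in the proof of Theorem~\ref{thm:CLT_NN}, and since your bound only improves as $C_1$ grows, the coordination is automatic, as you observe). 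What each approach buys: yours is self-contained, gives an explicit rate, and establishes the conclusion for all $C_1>d$; the paper's version recycles Lemma~\ref{lem:NN-NNalpha}, which is needed anyway for Theorem~\ref{thm:CLT_NN}, and would survive unchanged for weight distributions whose truncated moments cannot be computed in closed form.
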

\begin{proof}
Since
\[
|\var(\text{NN}_n^{\a_n}(\K_n))^{1/2}-\var(\text{NN}_n(\K_n))^{1/2}|
\le\var(\text{NN}_n(\K_n)-\text{NN}_n^{\a_n}(\K_n))^{1/2},
\]
we have
\[
\biggl|\frac{\var(\text{NN}_n^{\a_n}(\K_n))^{1/2}}{\var(\text{NN}_n(\K_n))^{1/2}}-1\biggr|
\le\biggl(\frac{\var(\text{NN}_n(\K_n)-\text{NN}_n^{\a_n}(\K_n))}{\var(\text{NN}_n(\K_n))}\biggr)^{1/2}.
\]
In order to show the right-hand side of the above equation converges to zero, we set $C_2 = 2(d+1)$.
From Lemma~\ref{lem:NN-NNalpha}, we can take $C_1>0$ such that for $\a_n=C_1\log n$ and sufficiently large $n$,
\[
\P(\text{NN}_n(\K_n)=\text{NN}_n^{\a_n}(\K_n))\ge1-3n^{-C_2}.
\]
Therefore, using the Cauchy--Schwarz inequality, we have
\begin{align*}
\var(\text{NN}_n(\K_n)-\text{NN}_n^{\a_n}(\K_n))
&\le\E[(\text{NN}_n(\K_n)-\text{NN}_n^{\a_n}(\K_n))^2]\\
&\le\E[(\text{NN}_n(\K_n)-\text{NN}_n^{\a_n}(\K_n))^4]^{1/2}\P(\text{NN}_n(\K_n)\neq\text{NN}_n^{\a_n}(\K_n))^{1/2}\\
&\le\E[\text{NN}_n(\K_n)^4]^{1/2}\cdot\sqrt3n^{-C_2/2}.
\end{align*}
Noting that
\[
\text{NN}_n(\K_n)=\sum_{\sg\in F_{d-1}(\cK_n)}\text{NN}_n(\sg),
\]
and that each $\text{NN}_n(\sg)$ obeys $\text{Exp}((n-d)/n)$, we have
\[
\E[\text{NN}_n(\K_n)^4]^{1/4}
\le\binom nd\E[\text{NN}_n([d])^4]^{1/4}
\le\binom nd\biggl(\frac{24}{\{(n-d)/n\}^4}\biggr)^{1/4}
\le\frac3{d!}n^d,
\]
for sufficiently large $n$.
Combining the above equations yields
\[
\var(\text{NN}_n(\K_n)-\text{NN}_n^{\a_n}(\K_n))
\le Cn^{2d-C_2/2}
= Cn^{d-1}.
\]
Thus, the conclusion follows from Lemma~\ref{lem:var_NN}.
\end{proof}

Now, we are ready to show the CLT for $\text{NN}_n^{\a_n}(\K_n)$ with a sufficiently large $\a_n$, tending to infinity as $n\to\infty$, by applying Corollary~\ref{cor:suff_CLT}.
\begin{lem}\label{lem:CLT_NNlm}
There exists a constant $C_1>0$ such that for $\a_n=C_1\log n$,
\[
\lim_{n\to\infty}d_K\biggl(\cL\biggl(\frac{\text{NN}_n^{\a_n}(\K_n)-\E[\text{NN}_n^{\a_n}(\K_n)]}{\sqrt{\var(\text{NN}_n^{\a_n}(\K_n))}}\biggr),\cN(0,1)\biggr)=0.
\]
\end{lem}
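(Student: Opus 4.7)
The plan is to apply Corollary~\ref{cor:suff_CLT} under condition~(1) to the statistic $f^{\a_n}(\X_n^{\a_n})$, which has the same law as $\text{NN}_n^{\a_n}(\K_n)$. I would fix $\a_n = C_1 \log n$ with $C_1$ large enough so that the variance comparison in Lemma~\ref{lem:varNN-varNNlm} is available. Since $p_{n,\a_n} = 1 - e^{-\a_n/n} \le \a_n/n$, the mean-degree parameter satisfies $\lm_n = n p_{n,\a_n} \le \a_n = O(\log n)$, so $\lm_n = o(n^\eps)$ for every $\eps > 0$.

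Next, I verify the Lipschitz and moment bounds. The estimate~\eqref{eq:Lip_NNlm} shows that $f^{\a_n}$ is $H$-Lipschitz with the constant $H \equiv (d+1)\a_n$, from which $J_n \le ((d+1)\a_n)^6 = O((\log n)^6) = o(n^\eps)$. The required variance lower bound $\sg_n^2 = \var(f^{\a_n}(\X_n^{\a_n})) = \Omega(n^d)$ follows at once from Lemma~\ref{lem:var_NN} combined with Lemma~\ref{lem:varNN-varNNlm}, since $\var(\text{NN}_n^{\a_n}(\K_n)) \sim (1 + d/2)\binom{n}{d}$.

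The heart of the argument is to show $\tilde\dl_{1,n} = 0$, i.e., that the nearest face-weight statistic stabilizes already at radius one. The key observation is that $f^{\a_n}(\X, \sg)$ depends only on the $d$-simplices of $\X$ containing $\sg$, and that adding or removing a single $d$-simplex $\tau$ affects $f^{\a_n}(\cdot, \sg)$ only when $\sg \subset \tau$. For such $\sg$, every $d$-simplex of $\X_n^{\a_n} \pm \tau$ containing $\sg$ also belongs to $B_1(\tau, \X_n^{\a_n} \pm \tau)$ by Definition~\ref{d:knbhd_complex}, since $\sg$ is a $(d-1)$-face of $\tau$ and any $d$-simplex containing $\sg$ shares this face with $\tau$. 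Consequently $D_\tau f^{\a_n}(\X_n^{\a_n}) = D_\tau f^{\a_n}(B_1(\tau, \X_n^{\a_n}))$ pointwise, so $\tilde\dl_{1,n} = 0 = o(n^{-\dl})$ for every $\dl > 0$.

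Thus all three hypotheses of condition~(1) in Corollary~\ref{cor:suff_CLT} are satisfied with $k = 1$, and the conclusion of the lemma follows. The main subtlety, beyond the straightforward verifications, is the locality argument just described: one has to unwind carefully how $B_1(\tau, \cdot)$ interacts with the two modified complexes $\X_n^{\a_n} \pm \tau$ to obtain the pointwise equality of randomized derivatives. For $d = 1$, which lies outside the scope of Corollary~\ref{cor:suff_CLT}, one would instead apply Theorem~\ref{thm:add-one} directly at $k = 1$, again using $\tilde\dl_{1,n} = 0$ and noting that the relevant covariance $\tilde\rho_{1,n}$ and connection probability $\gm_{1,n}$ admit elementary bounds because, for disjoint $d$-simplices $\tau, \tau'$, the $1$-neighborhoods $B_1(\tau, \X_n^{\a_n})$ and $B_1(\tau', \X_n^{\a_n})$ depend on disjoint families of Bernoullis and weights.
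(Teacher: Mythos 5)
Your proposal follows essentially the same route as the paper's proof: identify $\text{NN}_n^{\a_n}(\K_n)$ with $f^{\a_n}(\X_n^{\a_n})$, check $J_n\le\{(d+1)\a_n\}^6$, $\lm_n=np_{n,\a_n}\le\a_n=O(\log n)$ and $\sg_n^2=\Om(n^d)$ via Lemmas~\ref{lem:var_NN} and~\ref{lem:varNN-varNNlm}, and observe that the locality of the nearest face-weight forces $D_\tau f^{\a_n}(\X_n)=D_\tau f^{\a_n}(B_k(\tau,\X_n))$ for $k\ge1$, hence $\tilde\dl_{k,n}=0$, so Corollary~\ref{cor:suff_CLT}(1) applies. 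One caveat on your $d=1$ aside (which goes beyond what the paper does, since the paper simply invokes the $d\ge2$ corollary): the claim that $B_1(\tau,\X_n)$ and $B_1(\tau',\X_n)$ depend on disjoint families of Bernoullis and weights is false for $d=1$, because an edge joining an endpoint of $\tau$ to an endpoint of $\tau'$ is incident to both neighbourhoods; the route through Theorem~\ref{thm:add-one} is still plausible, but one must genuinely bound $\tilde\rho_{1,n}$ (e.g.\ by removing the finitely many shared cross edges and comparing, in the spirit of the proof of Proposition~\ref{prop:tilrho_est}) rather than appeal to independence.
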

\begin{proof}
We take a constant $C_1>0$ given in Lemma~\ref{lem:varNN-varNNlm}.
Since $\text{NN}_n^{\a_n}(\K_n)$ has the same distribution of that of $f^{\a_n}(\X_n^{\a_n})$, it suffices to prove that
\[
\lim_{n\to\infty}d_K\biggl(\cL\biggl(\frac{f^{\a_n}(\X_n^{\a_n})-\E[f^{\a_n}(\X_n^{\a_n})]}{\sqrt{\var(f^{\a_n}(\X_n^{\a_n}))}}\biggr),\cN(0,1)\biggr)=0
\]
by verifying the assumptions of Corollary~\ref{cor:suff_CLT}.
We first note that $J_n\le\{(d+1)\a_n\}^6$ from~\eqref{eq:Lip_NNlm} and that $\sg_n^2\coloneqq\var(f^{\a_n}(\X_n^{\a_n}))=\var(\text{NN}_n^{\a_n}(\K_n))=\Om(n^d)$ from Lemmas~\ref{lem:var_NN} and~\ref{lem:varNN-varNNlm}.
Additionally, $np_{n,\a_n}=n(1-e^{-\a_n/n})\le\a_n=O(\log n)$. Thus, apart from deriving suitable bounds on $\tilde\dl_{k,n}$,  we have checked the assumptions $J_n = o(n^{\epsilon})$ and $\lambda_n = o(n^{\epsilon})$ for all $\epsilon$ in Corollary~\ref{cor:suff_CLT}(1).

We only need to check now the condition on $\tilde\dl_{k,n}$. Recall that
\[
\tilde\dl_{k,n}\coloneqq\max_{i=0,1}\E[\{D_\tau f^{\a_n}(\X_n)-D_\tau f^{\a_n}(B_k(\tau,\X_n))\}^2\mid b_{n,\tau'}=i],
\]
where $\tau,\tau'$ are disjoint $d$-simplices in $\cK_n$.
From the definition of $f^{\a_n}$, we have
\begin{align*}
D_\tau f^{\a_n}(\X_n)
&=f^{\a_n}(\X_n+\tau)-f^{\a_n}(\X_n-\tau)\\
&=\sum_{\sg\in F_{d-1}(\cK_n)}\{f^{\a_n}(\X_n+\tau,\sg)-f^{\a_n}(\X_n-\tau,\sg)\}\\
&=\sum_{\sg\colon\text{$(d-1)$-face of $\tau$}}\{f^{\a_n}(\X_n+\tau,\sg)-f^{\a_n}(\X_n-\tau,\sg)\}.
\end{align*}
Similarly, whenever $k\ge1$,
\begin{align*}
D_\tau f^{\a_n}(B_k(\tau,\X_n))
&=f^{\a_n}(B_k(\tau,\X_n+\tau))-f^{\a_n}(B_k(\tau,\X_n-\tau))\nonumber\\
&=\sum_{\sg\in F_{d-1}(\cK_n)}\{f^{\a_n}(B_k(\tau,\X_n+\tau),\sg)-f^{\a_n}(B_k(\tau,\X_n-\tau),\sg)\}\nonumber\\
&=\sum_{\sg\colon\text{$(d-1)$-face of $\tau$}}\{f^{\a_n}(B_k(\tau,\X_n+\tau),\sg)-f^{\a_n}(B_k(\tau,\X_n-\tau),\sg)\}\nonumber\\
&=\sum_{\sg\colon\text{$(d-1)$-face of $\tau$}}\{f^{\a_n}(\X_n+\tau,\sg)-f^{\a_n}(\X_n-\tau,\sg)\}.
\end{align*}
Therefore, $\tilde\dl_{k,n}=0$ for $k\ge1$. Consequently, we can apply Corollary~\ref{cor:suff_CLT}(1) to $f^{\a_n}(\X_n^{\a_n})$ to obtain the conclusion.
\end{proof}

Using Lemmas~\ref{lem:NN-NNalpha}--\ref{lem:CLT_NNlm}, we can prove Theorem~\ref{thm:CLT_NN}.
\begin{proof}[Proof of Theorem~\ref{thm:CLT_NN}]
We can write
\begin{align}\label{eq:CLT_NN}
\frac{\E[\text{NN}_n(\K_n)]-\E[\text{NN}_n(\K_n)]}{\sqrt{\var(\text{NN}_n(\K_n))}}
&=\biggl(\frac{\var(\text{NN}_n^{\a_n}(\K_n))}{\var(\text{NN}_n(\K_n))}\biggr)^{1/2}\frac{\text{NN}_n^{\a_n}(\K_n)-\E[\text{NN}_n^{\a_n}(\K_n)]}{\sqrt{\var(\text{NN}_n^{\a_n}(\K_n))}}\nonumber\\
&\qad+\frac{\text{NN}_n(\K_n)-\text{NN}_n^{\a_n}(\K_n)}{\sqrt{\var(\text{NN}_n(\K_n))}}\nonumber\\
&\qad-\frac{\E[\text{NN}_n(\K_n)]-\E[\text{NN}_n^{\a_n}(\K_n)]}{\sqrt{\var(\text{NN}_n(\K_n))}}.
\end{align}
Let $C_2=d+2$.
From Lemmas~\ref{lem:CLT_NNlm} and~\ref{lem:NN-NNalpha}, we can take $C_1>0$ such that for $\a_n=C_1\log n$,
\begin{equation}\label{eq:CLT_NN1}
\frac{\text{NN}_n^{\a_n}(\K_n)-\E[\text{NN}_n^{\a_n}(\K_n)]}{\sqrt{\var(\text{NN}_n^{\a_n}(\K_n))}}\xrightarrow[n\to\infty]{d}\cN(0,1)
\end{equation}
and $\P(\text{NN}_n(\K_n)=\text{NN}_n^{\a_n}(\K_n))\ge1-3n^{-C_2}$ holds for sufficiently large $n$.
Then, in a similar way to that in the proof of Lemma~\ref{lem:varNN-varNNlm}, we have
\begin{align*}
\E[|\text{NN}_n(\K_n)-\text{NN}_n^{\a_n}(\K_n)|]
&\le\E[(\text{NN}_n(\K_n)-\text{NN}_n^{\a_n}(\K_n))^2]^{1/2}\P(\text{NN}_n(\K_n)\neq\text{NN}_n^{\a_n}(\K_n))^{1/2}\\
&\le\E[\text{NN}_n(\K_n)^2]^{1/2}\cdot\sqrt3n^{-C_2/2}\\
&\le Cn^{d-C_2/2}\\
&= Cn^{d/2-1}.
\end{align*}
Combining the above estimate with Lemma~\ref{lem:var_NN}, the second and third terms in the right-hand side of~\eqref{eq:CLT_NN} converge to zero in distribution.
From~\eqref{eq:CLT_NN1} and again Lemma~\ref{lem:var_NN}, the first term in the right-hand side of~\eqref{eq:CLT_NN} converges to $\cN(0,1)$ in distribution, which completes the proof.
\end{proof}

\subsection{Local statistics}
\label{sec:localstat}
In this subsection, we investigate local statistics of randomly weighted $d$-complexes $\X_n$. We first state and prove the main theorem and then discuss examples of local statistics. In order to define our statistic, we begin by defining a variant of complex consisting of simplices in the `$k$-neighbourhood' of a simplex; see Definition \ref{d:knbhd_complex}.
\begin{df}\label{df:M-ball}
Given a $d$-complex $X$ in $\cK_n$, $\sg\in F_{d-1}(\cK_n)$, and $M\in\N$, let $(X,\sg)_M$ denote the simplicial complex generated by $\sg$ and all the $d$-simplices $\tau$ in $X$ such that there exists a path $(\sg_0,\sg_1,\ldots,\sg_i)$ of length at most $M$ satisfying that $\sg_0=\sg$ and $\sg_{i-1}\cup\sg_i=\tau$.
Furthermore, for any $\tau\in F_d(\cK_n)$, we define
\[
(X,\tau)_M\coloneqq\bigcup_{\sg\colon\text{$(d-1)$-face of $\tau$}}(X,\sg)_M.
\]
For the case of a weighted $d$-complex $\X$, we set $(\X,\sg)_M$ and $(\X,\tau)_M$ as $(X,\sg)_M$ and $(X,\tau)_M$ with their weights, respectively.
\end{df}

The difference with Definition \ref{d:knbhd_complex} is that the former considers also the $(d-1)$-simplices outside the `$k$-neighbourhood' and these are not part of any $d$-simplex, i.e., isolated. Now, let $g$ be a real-valued function that takes finite simplicial complexes with weights on its $d$-simplices as inputs.
We assume that $g$ is invariant under weighted isomorphism.
For each fixed $M\in\N$, we consider the following real-valued function of weighted $d$-complexes in $\cK_n$:
\begin{equation}\label{df:local_statistic}
f(\X) = f^M(\X):=\sum_{\sg\in F_{d-1}(\cK_n)}g((\X,\sg)_M).
\end{equation}
%
The following is the main result in this subsection.
Recall that $\X_n$ is the randomly weighted $d$-complex generated by $(B_n,W_n)$.
\begin{thm}\label{thm:CLT_M-bounded}
Let $d\ge2$, and let $f$ be a real-valued function on weighted $d$-complexes in $\cK_n$ defined in~\eqref{df:local_statistic}.
Furthermore, assume that $f$ is $H$-Lipschitz with $J_n < \infty$.
Then, for every $2M\le k \le n$,
\begin{align*}
&d_K\biggl(\cL\biggl(\frac{f(\X_n)-\E[f(\X_n)]}{\sg_n}\biggr),\cN(0,1)\biggr)\\
&\le CJ_n^{1/6}(1\vee\lm_n)^{1/2}\biggl(\frac{n^d}{\sg_n^2}\biggr)^{1/2}\biggl(\frac{k^5(1\vee d\lm_n)^{2k}}n\biggr)^{1/12}+\biggl(\frac{n^d}{\sg_n^2}\biggr)^{3/4}\frac{J_n^{1/4}\lm_n^{1/2}}{n^{d/4}}.
\end{align*}
\end{thm}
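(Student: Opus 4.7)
The plan is to apply Corollary~\ref{cor:no_gm_rho} to $f=f^M$. The only nontrivial input is a bound on $\tilde\dl_{k,n}$, and the locality of $f$ yields the strongest possible one: $\tilde\dl_{k,n}=0$ for every $k\ge 2M$. Once this is in hand, the $\tilde\dl_{k,n}^{1/8}$ term in~\eqref{eq:no_gm_rho} drops out and the displayed bound in~\eqref{eq:no_gm_rho} becomes exactly the bound claimed here.

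To see $\tilde\dl_{k,n}=0$, I would establish the pathwise identity
\[
D_\tau f(\X_n)=D_\tau f(B_k(\tau,\X_n))\qquad (k\ge 2M)
\]
for every realization of $\X_n$ and every $\tau\in F_d(\cK_n)$; the conditional expectation defining $\tilde\dl_{k,n}$ is then trivially zero. Writing $f(\X)=\sum_\sg g((\X,\sg)_M)$, we expand
\[
D_\tau f(\X_n)=\sum_{\sg\in F_{d-1}(\cK_n)}\bigl\{g((\X_n+\tau,\sg)_M)-g((\X_n-\tau,\sg)_M)\bigr\},
\]
and similarly for $B_k(\tau,\X_n)$, reducing the task to matching summands in $\sg$.

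I would split by whether $\sg$ lies within distance $M$ of $\tau$ in $\X_n+\tau$. If it does not, then the distinctness-of-$d$-simplices clause in Definition~\ref{d:knbhd_complex} forbids any length-$\le M$ path out of $\sg$ to use $\tau$ (otherwise $\tau$ itself would belong to the $M$-neighbourhood of $\sg$), so $(\X_n+\tau,\sg)_M=(\X_n-\tau,\sg)_M$ and the summand on the left vanishes; the same argument, applied inside $B_k(\tau,\X_n+\tau)\subseteq \X_n+\tau$, makes the corresponding summand on the right vanish. If $\sg$ \emph{is} within distance $M$ of $\tau$ in $\X_n+\tau$, then any $d$-simplex in $(\X_n\pm\tau,\sg)_M$, and any $d$-simplex on a witnessing length-$\le M$ path from $\sg$, sits within $2M$ steps of $\tau$ and therefore lies in $B_{2M}(\tau,\X_n\pm\tau)\subseteq B_k(\tau,\X_n\pm\tau)$ once $k\ge 2M$. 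This forces $(\X_n\pm\tau,\sg)_M=(B_k(\tau,\X_n\pm\tau),\sg)_M$, and the two $g$-differences coincide.

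Summing over $\sg$ then yields $D_\tau f(\X_n)=D_\tau f(B_k(\tau,\X_n))$ identically, so $\tilde\dl_{k,n}=0$ for $k\ge 2M$, and Corollary~\ref{cor:no_gm_rho} applied with this $k$ gives the bound. The main bookkeeping obstacle is making the ``the \emph{entire} witnessing path (not only its endpoints) stays inside $B_k(\tau,\X_n\pm\tau)$'' step rigorous; the cleanest route uses the elementary identity $B_\ell(\tau,\X_n\pm\tau)=B_\ell(\tau,\X_n)\pm\tau$ for $\ell\ge 1$, itself a consequence of the distinctness of $d$-simplices in paths (which prevents $\tau$ from serving as a shortcut between two other $d$-simplices). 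With this observation in place, the triangle-inequality estimates translate cleanly into ball containments.
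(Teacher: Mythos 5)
Your proposal is correct and follows essentially the same route as the paper: the paper also proves the pathwise identity $D_\tau f(\X_n)=D_\tau f(B_k(\tau,\X_n))$ for $k\ge 2M$ (by restricting both sums to $\sg\in F_{d-1}((\X_n,\tau)_M)$ and noting $(\X_n\pm\tau,\sg)_M\subset B_k(\tau,\X_n\pm\tau)$, hence $(B_k(\tau,\X_n\pm\tau),\sg)_M=(\X_n\pm\tau,\sg)_M$), concludes $\tilde\dl_{k,n}=0$, and invokes Corollary~\ref{cor:no_gm_rho}. Your near/far split and the auxiliary identity $B_k(\tau,\X_n+\tau)=B_k(\tau,\X_n-\tau)+\tau$ are just a slightly more explicit bookkeeping of the same argument.
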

\begin{proof}
Let $\tau,\tau'\in F_d(\cK_n)$ be disjoint $d$-simplices.
We consider
\[
\tilde\dl_{k,n}\coloneqq\max_{i=0,1}\E[\{D_\tau f(\X_n)-D_\tau f(B_k(\tau,\X_n))\}^2\mid b_{n,\tau'}=i].
\]
From~\eqref{df:local_statistic}, we have
\begin{align*}
D_\tau f(\X_n)
&=f(\X_n+\tau)-f(\X_n-\tau)\nonumber\\
&=\sum_{\sg\in F_{d-1}(\cK_n)}\{g((\X_n+\tau,\sg)_M)-g((\X_n-\tau,\sg)_M)\}\nonumber\\
&=\sum_{\sg\in F_{d-1}((\X_n,\tau)_M)}\{g((\X_n+\tau,\sg)_M)-g((\X_n-\tau,\sg)_M)\}.
\end{align*}
Similarly, whenever $k\ge2M$,
\begin{align*}
D_\tau f(B_k(\tau,\X_n))
&=f(B_k(\tau,\X_n+\tau))-f(B_k(\tau,\X_n-\tau))\\
&=\sum_{\sg\in F_{d-1}(\cK_n)}\{g((B_k(\tau,\X_n+\tau),\sg)_M)-g((B_k(\tau,\X_n-\tau),\sg)_M)\}\\
&=\sum_{\sg\in F_{d-1}((\X_n,\tau)_M)}\{g((B_k(\tau,\X_n+\tau),\sg)_M)-g((B_k(\tau,\X_n-\tau),\sg)_M)\}\\
&=\sum_{\sg\in F_{d-1}((\X_n,\tau)_M)}\{g((\X_n+\tau,\sg)_M)-g((\X_n-\tau,\sg)_M)\}.
\end{align*}
For the last line, we note that if $k\ge2M$, then $(\X_n\pm\tau,\sg)_M$ are contained in $B_k(\tau,\X_n\pm\tau)$ for each $\sg\in F_{d-1}((\X_n,\tau)_M)$, hence $(B_k(\tau,\X_n\pm\tau),\sg)_M=(\X_n\pm\tau,\sg)_M$.
Therefore, $\tilde\dl_{k,n}=0$ for any $k\ge2M$.
Thus, the conclusion follows immediately from Corollary~\ref{cor:no_gm_rho}.
\end{proof}


We provide two examples of local statistics of $d$-Linial-Meshulam complex, namely isolated simplex count and local Betti number. Though one can construct more such examples,  the verification of the variance lower bound needs to be done in each case separately. For isolated simplex count,  we borrow variance lower bounds from \cite{ERTZ21} and obtain CLT for isolated simplex count easily from our theorem above albeit with poorer rates.  For the case of local Betti number count, we shall derive the necessary variance lower bound using~\eqref{eq:var_lb_unweight}.

Let $\lm>0$ be fixed, and let $Y_n$ be a $d$-Linial--Meshulam complex on $[n]$ with parameter $\lm/n$, i.e., a random subcomplex of $\cK_n$ given by
\[
\P(Y_n=Y)=\begin{cases}
(\lm/n)^{f_d(Y)}(1-\lm/n)^{\binom n{d+1}-f_d(Y)}	&\text{if $\cK_n^{(d-1)}\subset Y\subset\cK_n^{(d)}$,}\\
0	&\text{otherwise.}
\end{cases}
\]
The random complex $Y_n$ can be constructed by taking all possible $(d-1)$-faces on $n$ vertices and then adding $d$-faces independently with probability $\lm/n$. Note that $1$-Linial--Meshulam complex can be regarded as an \ER graph.
\begin{eg}(Number of Isolated simplices)
\label{eg:isolface}
Define a function $g(\X)$ on weighted complexes to be $1$, if $\X$ has no $d$-simplex, and $0$ otherwise. Then $g((\X,\sg)_1) = 1$, if $\sigma$ is an isolated $(d-1)$-simplex in $X$, and $g((\X,\sg)_1) = 0$ otherwise. Thus, we have that  $f(\X) = f^1(\X)$ is the number of isolated simplices in $\X$. In this case, it is easy to check that $\sup_{n \geq 1}J_n < \infty$ and also it is shown in \cite[Section 8]{ERTZ21} that $\sigma_n^2 = \Omega(n^d)$. Thus from Theorem \ref{thm:CLT_M-bounded}, we obtain
$$ d_K\biggl( \cL\biggl(\frac{f(\X_n)-\E[f(\X_n)]}{\sg_n}\biggr) , \cN(0,1)\biggr) = O(n^{-1/12}).$$
This is weaker than the rate $n^{-d/2}$ obtained in \cite[Theorem 1.4]{ERTZ21}. As discussed before, the latter article uses discrete second-order Poincar\'{e} inequality that is specific to the unweighted case and involves computation of iterated add-one cost functions. This is reflective of the differences between our results and those of \cite{ERTZ21}. The latter approach will give better bounds in the unweighted case while ours applies to more general statistics but will possibly give sub-optimal bounds. A similar difference is noted for the Poisson case in \cite[Remark 1.2]{LPY20}.
\end{eg}
Next, we consider an example from a topological aspect, where we again do not consider weights (or just set $w_{n,\tau}\equiv1$). 
A simplicial complex $X$ is said to be \textit{pure $d$-dimensional} if for any $\sg\in X$, there exists a $d$-simplex $\tau$ in $X$ containing $\sg$.
A pure $d$-dimensional simplicial complex $X$ is said to be \textit{strongly connected} if $\sg\xleftrightarrow[]{X}\sg'$ for any $\sg,\sg'\in F_{d-1}(X)$.
For convenience, we also regard a simplicial complex generated by a single $(d-1)$-simplex as a strongly connected simplicial complex.
%
A \textit{strongly connected component} of a $d$-complex $X$ in $\cK_n$ is an inclusionwise maximal strongly connected subcomplex of $X$.
Given a $d$-complex $X$ in $\cK_n$ and $\sg\in F_{d-1}(X)$, let $C(\sg,X)$ denote the strongly connected component of $X$ containing $\sg$.
The following example relates to the notions of $M$-bounded persistence diagram and $M$-bounded persistent Betti number, recently introduced in~\cite{BCHS20}.
Below, $Z^{d-1}(\cdot)$ indicates the cocycle group of a finite simplicial complex with coefficients in $\R$. Notice that this is actually a vector space since we are using real-valued coefficients and we denote its dimension by $\dim Z^{d-1}(\cdot)$.
\begin{eg}(Local Betti number)
For a fixed $M\in\N$, we define
\[
f(Y_n)=\sum_{\sg\in F_{d-1}(\cK_n)}g((Y_n,\sg)_M),
\]
where we set
\[
g(X)=\frac{\dim Z^{d-1}(X)}{f_{d-1}(X)}\1_{\{0<f_{d-1}(X)\le M\}}
\]
for any finite simplicial complex $X$.
Recall that $f_{d-1}(X)$ indicates the number of $(d-1)$-simplices in $X$.
Clearly, $g$ is invariant under simplicial isomorphisms.
Noting that both the events
\[
\{f_{d-1}((Y_n,\sg)_M)\le M\}
\quad\text{and}\quad
\{f_{d-1}(C(\sg,Y_n))\le M\}
\]
imply the event $\{(Y_n,\sg)_M=C(\sg,Y_n)\}$, we have
\begin{align}\label{eq:local_cocycle}
f(Y_n)
&=\sum_{\sg\in F_{d-1}(\cK_n)}\frac{\dim Z^{d-1}((Y_n,\sg)_M)}{f_{d-1}((Y_n,\sg)_M)}\1_{\{f_{d-1}((Y_n,\sg)_M)\le M\}}\nonumber\\
&=\sum_{\sg\in F_{d-1}(\cK_n)}\frac{\dim Z^{d-1}(C(\sg,Y_n))}{f_{d-1}(C(\sg,Y_n))}\1_{\{f_{d-1}(C(\sg,Y_n))\le M\}}\nonumber\\
&=\sum_C\dim Z^{d-1}(C),
\end{align}
where $C$ ranges over all strongly connected components of $Y_n$ with $f_{d-1}(C)\le M$.
We call the statistic $f(Y_n)$ the \textit{$(d-1)$st $M$-bounded cocycle count} of $Y_n$.
When $d=1$, $f(Y_n)$ is the number of connected components for the Erd\H os--R\'enyi graph model $G(n,\lm/n)$ of size at most $M$.
We also define
\[
\b_{d-1}^M(Y_n)
:=f(Y_n)-\dim B^{d-1}(Y_n)
=f(Y_n)-\binom{n-1}{d-1},
\]
and call it the \textit{$(d-1)$st $M$-bounded Betti number} of $Y_n$.
When $M\ge\binom nd$, the $(d-1)$st $M$-bounded Betti number of $Y_n$ is identical to the usual $(d-1)$st Betti number of $Y_n$ since $f(Y_n)=\dim Z^{d-1}(Y_n)$ from~\eqref{eq:local_cocycle} in this case.
If we delete a $d$-simplex from a $d$-complex $Y$ in $\cK_n$, then the number of strongly connected components $C$ of $Y$ with $f_{d-1}(C)\le M$ increases by at most $d+1$, and $\dim Z^{d-1}(C)\le f_{d-1}(C)\le M$ holds for such $C$.
Hence, $f$ is Lipschitz with $H\equiv(d+1)M$, and $J_n=(d+1)^6M^6$.

Next, we claim that $\var(f(Y_n))=\Om(n^d)$.
To this end, we will use~\eqref{eq:var_lb_unweight}:
\[
\var(f(Y_n))\geq2\binom n{d+1}\frac\lm n\biggl(1-\frac\lm n\biggr)\E[D_{\tau}f(Y_n)]^2,
\]
where $\tau$ is an arbitrarily fixed $d$-simplex in $\cK_n$.
Let $C_1,C_2,\ldots,C_m$ be the strongly connected components of $Y_n-\tau$ that include a $(d-1)$-dimensional face of $\tau$.
If $f_{d-1}(\{\tau\}\cup C_1\cup\cdots\cup C_m)>M$, then obviously $D_\tau f(Y_n)\le0$ by~\eqref{eq:local_cocycle}.
If $f_{d-1}(\{\tau\}\cup C_1\cup\cdots C_m)\le M$, then
\[
D_\tau f(Y_n)=\dim Z^{d-1}(\{\tau\}\cup C_1\cup\cdots\cup C_m)-\dim Z^{d-1}(C_1\cup C_2\cup\cdots\cup C_m)\le 0,
\]
as adding $d$-simplex only reduces the dimension of the cocycle group. Therefore, it holds that $D_\tau f(Y_n)\le0$.
Moreover, if we set $A$ as the event that all $(d-1)$-dimensional faces of $\tau$ are maximal, then $D_{\tau}f(Y_n)=-1$ on $A$.
Therefore,
\[
\E[D_{\tau}f(Y_n)]
\le-\P(A)
=-\biggl(1-\frac\lm n\biggr)^{(n-d-1)(d+1)}.
\]
From the above estimates, we obtain
\[
\frac{\var(f(Y_n))}{n^d}
\ge\frac2{n^d}\binom n{d+1}\frac\lm n\biggl(1-\frac\lm n\biggr)\biggl(1-\frac\lm n\biggr)^{2(n-d-1)(d+1)}
\xrightarrow[n\to\infty]{}\frac{2\lm}{(d+1)!}e^{-2(d+1)\lm}>0.
\]
Consequently, we have $\var(f(Y_n))=\Om(n^d)$.
By Theorem~\ref{thm:CLT_M-bounded}, we obtain
\[
d_K\Biggl(\cL\Biggl(\frac{\b_{d-1}^M(Y_n)-\E[\b_{d-1}^M(Y_n)]}{\sqrt{\var(\b_{d-1}^M(Y_n))}}\Biggr),\cN(0,1)\Biggr)
=d_K\Biggl(\cL\Biggl(\frac{f(Y_n)-\E[f(Y_n)]}{\sqrt{\var(f(Y_n))}}\Biggr),\cN(0,1)\Biggr)
=O(n^{-1/12}).
\]
Furthermore, we remark that $M$ in the above argument may depend on $n$. For example, if $\lm\le1/d$, then the above CLT holds for $M = M_n = o(n^{1/17})$. If $\lm>1/d$, then it is straightforward to verify that if $M=M_n\le c\log n$ for some $c<1/\{4\log( d\lm)\}$, then
\[
\lim_{n\to\infty}d_K\Biggl(\cL\Biggl(\frac{\b_{d-1}^M(Y_n)-\E[\b_{d-1}^M(Y_n)]}{\sqrt{\var(\b_{d-1}^M(Y_n))}}\Biggr),\cN(0,1)\Biggr)
=\lim_{n\to\infty}d_K\Biggl(\cL\Biggl(\frac{f(Y_n)-\E[f(Y_n)]}{\sqrt{\var(f(Y_n))}}\Biggr),\cN(0,1)\Biggr)
=0.
\]
\end{eg}

\remove{ \subsection{Weighted adjacency matrices}
Consider a weighted (unsigned) adjacency matrix of the Linial--Meshulam complex $\X_n$ as follows
\[
	A_n(\sigma, \sigma') = \begin{cases}
		b_{n, \tau} w_{n, \tau},	&\text{if } \sigma \cup \sigma' = \tau \in F_d(\cK_n),\\
		0,	&\text{otherwise}. 
	\end{cases}
\]
Then $A_n$ is a symmetric random matrix of size $N = \binom{n}{d}$.
Let $\lambda_1, \lambda_2, \dots, \lambda_N$ be the eigenvalues of $A_n$ and $L_n$ be their empirical distribution 
\[
	L_n = \frac1N \sum_{i=1}^N \delta_{\lambda_i},
\]
with $\delta_\lambda$ the Dirac measure at $\lambda \in \R$. The limiting behavior of $L_n$ has been studied via moment method \cite{AKS23, KR17} in the non-weighted (\dy{i.e., $w_{n,\tau} = 1$}) case. It was shown in \cite{AKS23} that when $n$ tends to infinity with $\lambda_n \to \lambda \in (0, \infty)$, the empirical distribution $L_n$ converges weakly to a limiting probability measure $\mu_\lambda$, almost surely. To prove it, the key idea of the moment method is to show that each $k$th moment of $L_n$ 
\[
	S_{n, k} = \int x^k d L_n(x) = \frac 1N \sum_{i = 1}^N (\lambda_i)^k
\]
converges almost surely to a non-random limit $m_k$ and that the sequence $\{m_k\}_{k \ge 0}$ uniquely determines a probability measure $\mu_\lambda$. It is worth noting that when $\lambda_n \to 0$ or $\lambda_n \to \infty$, we consider the scaled matrices $A_n/\sqrt{np(1-p)}$ (cf.~\cite{KR17}) (check again).

We focus on the case $\lambda_n \to \lambda \in (0, \infty)$. For simplicity, let us assume that $\lambda_n = \lambda$, or $p_n = \lambda/n$ and the distribution $F$ of the weights $w_{n, \tau}$ is unchanged and has all finite moments. Under those assumptions, arguments in \cite{AKS23} can be easily modified to yield the following law of large numbers \dy{in the weighted case}
\[
	S_{n, k} \to c_k \text{ a.s.~as } n \to \infty.
\]
Here $c_k$ is non-random and depends on $\lambda$ and the distribution $F$. We need some additional condition on the distribution $F$ to make sure that the sequence of limiting moments $\{c_k\}$ uniquely determine a probability measure. However, it goes beyond the scope of this paper. We aim to establish the CLT for $S_{n, k}$. 
\begin{thm}\label{thm:moments}
Let $\lambda > 0$ be fixed. Assume that $p_n = \lambda / n$ and that the common distribution  of the weights $w_{n, \tau}$, denoted by $W$, has all finite moments and does not depend on $n$. Then the following hold.
\begin{itemize}
	\item[\rm(i)] {\rm(The law of large numbers)} As $n \to \infty$, 
	\[
		S_{n, k} \to c_k, \text{a.s.,}
	\]
	where \dy{$c_k \in (0,\infty)$} constant depending on $\lambda$ and moments of the common distribution $F$.
	\item[\rm(ii)] For $k \ge 2$, \dy{there exists $\sigma_k^2  \in (0,\infty)$ such that} as $n \to \infty$, 
	\[
		n^d\var[S_{n, k}] \to \sigma_k^2. 
	\]
	
	\item[\rm(iii)] {\rm (CLT)} Shu, help me to write down this part.
\end{itemize}
\end{thm}

This theorem will be proved through several lemmata. We first express $S_{n, k}$ in the following way
\begin{align*}
	S_{n, k} &= \frac1N \sum_{i=1}^N \lambda_i^k \\
		&= \frac1N \mathrm {trace\,} (A_n^k) \\
		&= \frac 1N \sum_{\sigma_1, \sigma_2, \dots, \sigma_k \in F_{d-1}(\cK_n)} A_n(\sigma_1, \sigma_2) \cdots A_n(\sigma_k, \sigma_1).
\end{align*}
From the definition of the weighted adjacency matrix $A_n$, the last sum runs over all $(d-1)$ simplexes $\sigma_1, \sigma_2, \dots, \sigma_k$ satisfying the condition that 
\[
	\sigma_1 \cup \sigma_2, \sigma_2 \cup \sigma_3, \dots, \sigma_{k-1} \cup \sigma_k, \sigma_k \cup \sigma_1 \in F_d(\cK_n).
\]
In other words, the so-called word $w = \sigma_1 \sigma_2 \dots \sigma_k \sigma_{k+1}$ is a closed path of length $k+1$ ($\sigma_{k+1} = \sigma_1$). Note that $S_{n, k} = 0$ when $k = 1$. 

The next idea is to divide the sum into the sum over equivalence classes of words. If we imitate arguments as used in \cite{AKS23}, we will obtain that 
\[
	\E[S_{n, k}] \to \dy{c_k}, \quad \var[S_{n, k}] \le \frac{const}{n^d}.
\]
For our purpose, we need the exact order of the variance. Thus, we use the following modifications which have been developed in \cite{KT23} to show the CLT in the regime where $np_n(1-p_n) \to \infty$. 

\begin{df}
\begin{itemize}
\item[(i)]
	A (closed ordered) word $w = \tilde \sigma_1 \sigma_2 \dots \sigma_k$ of length $k \ge 2$ is a sequence of (d-1) simplexes where the $(d-1)$-simplex $\tilde \sigma_1$ is an ordered set of vertices and $\sigma_i \in F_{d-1}(\cK_n)$ for $i = 2, \dots, k$ satisfying 
\[
	\sigma_i \cup \sigma_{i+1} \in F_d(\cK_n), \quad i = 1, \dots, k-1.
\]
Here $\sigma_1 = \tilde \sigma_1$ as a set and $\sigma_k = \sigma_1$.
\item[(ii)] Two words $w = \tilde \sigma_1 \sigma_2 \dots \sigma_k$ and $w' = \tilde \sigma_1' \sigma_2' \dots \sigma_k'$ are equivalent if there is a bijection $\pi$ on $[n]$ that maps $w$ into $w'$, that is, 
\[
	\pi(v_j) = v_j', \quad j = 1, \dots, d, (\tilde \sigma_1 = (v_1, \dots, v_d), \tilde \sigma_1' = (v_1', \dots, v_d')),
\]
and 
\[
	\pi(\sigma_i) = \sigma_i', \quad i = 2, \dots, k.
\]
In this case, we also say that under a bijection $\pi$, two words $w_1$ and $w_1'$ are equivalent.
\end{itemize}
\end{df}

Let $\cW_k$ be the set of the closed ordered words of length $k+1$. Then we rewrite $S_{n, k}$ as
\begin{equation}\label{Snk*}
	S_{n, k} = \frac1{N d!} \sum_{w= \tilde \sigma_1 \sigma_2 \dots \sigma_{k+1} \in \cW_k} A_n(\sigma_1, \sigma_2) \dots A_n(\sigma_k, \sigma_{k+1}).
\end{equation}
Note that by ordering the first simplex $\sigma_1$, each term is repeated $d!$ times.

For a word $w = \tilde \sigma_1 \sigma_2 \dots \sigma_{k+1}$, let 
\[
	\supp_0 (w) = \sigma_1 \cup \sigma_2 \cup \cdots \cup \sigma_{k+1} 	
\]
be the set of all vertices and 
\[
	\supp_d(w) = \{\sigma_i \cup \sigma_{i+1} : i = 1, \dots, k\}
\]
be the set of all $d$-simplexes that the word/path $w$ visits. For $\tau \in \supp_d(w)$, (or more generally for $\tau \in F_d(\cK_n)$), denote by $N_\tau(w)$ the number of times that the word $w$ visits. With this notation, we can express a general term in the formula~\eqref{Snk*} in the following form 
\begin{equation}\label{Tw}
	T_w := A_n(\sigma_1, \sigma_2) \cdots A_n(\sigma_k, \sigma_{k+1}) = \prod_{\tau \in \supp_d(w)} (b_{n, \tau} w_{n, \tau})^{N_\tau(w)}.
\end{equation} 
Let $W$ be a random variable having the common  distribution $F$ of the weights. It follows that 
\begin{align*}
	\E[T_w] &= \prod_{\tau \in \supp_d(w)} \E[b_{n, \tau}] \E[W^{N_\tau(w)}] 
	\\&= \left(\frac \lambda n\right)^{|\supp_d(w)|}  \prod_{\tau \in \supp_d(w)} \E[W^{N_\tau(w)}] =:  \left(\frac \lambda n\right)^{|\supp_d(w)|} g(w).
\end{align*}
Here $|A|$ denotes the cardinality of finite set $A$, and 
\[
	g(w) = \prod_{\tau \in \supp_d(w)} \E[W^{N_\tau(w)}].
\]
Observe that when two words $w$ and $w'$ are equivalent,  \dy{
\[
	|\supp_d(w)| = |\supp_d(w')|, \quad g(w) = g(w'), \quad \E[T_w] = \E[T_{w'}].
\]
}Denote by $[w]$ the equivalence class of the word $w$.
Let $\tilde W_k$ be the set of equivalence classes of words in $W_k$. Then 
\begin{equation}\label{expectation-of-moment}
	\E[S_{n, k}] = \frac 1{Nd!} \sum_{[w] \in \tilde W_k} |[w]|\left(\frac \lambda n\right)^{|\supp_d(w)|} g(w).
\end{equation}

\begin{lem}\label{lem:equivalent-words}
\begin{itemize}
	\item[\rm(i)] For $w \in \cW_k$, 
	\[
		|[w]| = \frac{n!}{(n - s)!} = n (n - 1) \cdots (n - s + 1) =: C_{n, s}, \quad s = |\supp_0(w)|. 
	\]
	
	\item[\rm (ii)] For $w \in \cW_k$, 
	\[
		d \le |\supp_0(w)| \le |\supp_d(w)| + d.
	\]
\end{itemize}
\end{lem}
\begin{proof}
	Let $w = \tilde \sigma_1 \sigma_2 \cdots \sigma_{k+1} \in \cW_k$ with $\tilde \sigma_1 = (v_1, v_2, \dots, v_d)$. When we go through $\sigma_i$, $i = 2, \dots, k+1$, all vertices in $\supp_0(w)$ appear in a unique way. This is because in each step, at most one new $d$-simplex is added to $\supp_d(w)$, and when a new $d$-simplex is added, at most one new vertex is added to $\supp_0(w)$. Hence, (ii) is proved. 
	
	Denote by $v_1, v_2, \dots, v_s$ the list of all vertices in $\supp_0(w)$ in that way. Take a word $w' = \tilde \sigma_1' \sigma_2' \dots \sigma_{k+1}'$ equivalent to $w$ under a bijection $\pi$ on $[n]$. In the same way, all vertices in $\supp_0(w')$ is listed as $v_1', v_2', \dots, v_{s'}', s' = |\supp_0(w')|$. It now follows from the definition of equivalent words that $s' = s$, and
\[
	\pi(v_i) = v_i', \quad i = 1, \dots, s.
\]  
Therefore, the equivalence class $[w]$ has the same cardinality as the number of ordered sequences $(u_1, u_2, \dots, u_s)$, where $\{u_i\}_{i=1}^s$ are a distinct subset of $[n]$. The statement (i) then follows. The proof is complete.
\end{proof}

\begin{lem}\label{lem:mean-Snk}
	As $n \to \infty$, 
\[
	\E[S_{n, k}] \to c_k.
\]
The expression of $c_k$ is given in the equation~\eqref{limiting-mean} below.
\end{lem}
\begin{proof}
	Denote by $\tilde \cW_k$ the set of equivalence classes of words in $\cW_k$. We rewrite the formula~\eqref{expectation-of-moment} with the help of Lemma~\ref{lem:equivalent-words}(i) as
\begin{align*}
\E[S_{n, k}] =  \sum_{[w] \in \tilde W_k} \frac 1{N d!} n (n - 1) \cdots (n - |\supp_0(w)| + 1) \frac1{n^{|\supp_d(w)|}} \times \Big(\lambda^{|\supp_d(w)|} g(w) \Big).
\end{align*}
Recall that $N = \binom{n}{d}$. Thus, as $n \to \infty$, 
\[ 
 \frac 1{N d!} n (n - 1) \cdots (n - |\supp_0(w)| + 1) \frac1{n^{|\supp_d(w)|}} \approx n^{|\supp_0(w)| - |\supp_d(w)| - d}.
\]
Here $a_n \approx b_n$ as $n \to \infty$ means that $a_n / b_n \to 1$ as $n \to \infty$. Now Lemma~\ref{lem:equivalent-words}(ii) implies that 
\begin{equation}
\label{limiting-mean}
	\E[S_{n, k}] \to  \sum_{\substack{[w] \in \tilde W_k, \\ |\supp_0(w) | = |\supp_d(w)| + d}} \lambda^{|\supp_d(w)|} g(w) =: c_k \quad \text{as}\quad n \to \infty.
 \end{equation}
The lemma is proved. \footnote{\dy{What about $c_k \in (0,\infty)$?}}
\end{proof}

Next we study the variance of $S_{n, k}$. Recall that 
\[
	S_{n, k} = \frac 1{N d!} \sum_{w \in  \cW_k} T_w
\]
with $T_w$ given in the equation~\eqref{Tw}. Then 
\[
	\var[S_{n, k}] = \frac 1{(N d!)^2} \sum_{w_1, w_2 \in  \cW_k} \cov(T_{w_1}, T_{w_2}).
\]
Note that the covariance of $T_{w_1}$ and $T_{w_2}$ is zero if $\supp_d(w_1) \cap \supp_d(w_2) = \emptyset$. Thus, we only deal with a pair of words $(w_1, w_2)$ such that $\supp_d(w_1) \cap \supp_d(w_2) \neq \emptyset$. For that pair, let 
\begin{align*}
	&\supp_0(w_1, w_2) = \supp_0(w_1) \cup \supp_0(w_2), \\
	 &\supp_d(w_1, w_2) = \supp_d(w_1) \cup \supp_d(w_2), \\
	 & N_\tau(w_1, w_2) = N_\tau(w_1) + N_\tau(w_2).
\end{align*}
\begin{df}
	Two pairs of words $(w_1, w_2)$ and $(w_1', w_2')$ are equivalent if there is a bijection $\pi$ on $[n]$ such that under $\pi$, $w_1$ and $w_1'$ are equivalent and $w_2$ and $w_2'$ are equivalent.
\end{df}

Denote by $[(w_1, w_2)]$ the equivalence class of the pair $(w_1, w_2)$. 
Similar to Lemma~\ref{lem:equivalent-words}(i), the following results hold.
\begin{lem}
\begin{itemize}
\item[\rm(i)]
	For two words $w_1, w_2 \in \cW_k$, 
	\[
		|[(w_1, w_2)]| = n(n-1) \cdots (n - |\supp_0(w_1, w_2)| + 1).
	\]
\item[\rm(ii)] Assume that $\supp_d(w_1) \cap \supp_d(w_2) \neq \emptyset$. Then 
\[
	|\supp_0(w_1, w_2)| \le |\supp_d(w_1, w_2)| + d.
\]
\end{itemize}
\end{lem}
\begin{proof}
	We omit the proof of (i) because it is similar to the proof of Lemma~\ref{lem:equivalent-words}(i). For (ii), the idea is to explore the pair from a common $d$-simplex. Each time we add a new $d$-simplex, at most one vertex is added. From which, the desired inequality follows. The proof is complete.
\end{proof}

Let us now consider the covariance of $T_{w_1}$ and $T_{w_2}$ when $w_1, w_2 \in \tilde \cW_k$ and $\supp_d(w_1) \cap \supp_d(w_2) \neq \emptyset$. It is clear that 
\[
	\E[T_{w_1} T_{w_2}] = \prod_{\tau \in \supp_d(w_1, w_2)} \E[b_{n, \tau}] \E[W^{N_\tau(w_1, w_2)}] =: \left(\frac \lambda n\right)^{|\supp_d(w_1, w_2)|} h(w_1, w_2),
\]
where 
\[
	h(w_1, w_2) =  \prod_{\tau \in \supp_d(w_1, w_2)}  \E[W^{N_\tau(w_1, w_2)}] .
\]
Therefore, 
\begin{align*}
	\cov(T_{w_1}, T_{w_2}) &= \E[T_{w_1} T_{w_2}] - \E[T_{w_1}] \E[T_{w_2}]\\
	&=\left(\frac \lambda n\right)^{|\supp_d(w_1, w_2)|} h(w_1, w_2) - \left(\frac \lambda n\right)^{|\supp_d(w_1)| + |\supp_d(w_2)|} g(w_1) g(w_2).
\end{align*}

Denote by $\tilde \cW_k^{(2)}$ the set of equivalence classes of pairs of words $(w_1, w_2)$ satisfying $\supp_d(w_1) \cap \supp_d(w_2) \neq \emptyset$. Then we express the variance as 
\begin{align*}
	N d! \var[S_{n, k}] &= \sum_{[(w_1, w_2)] \in \tilde \cW_k^{(2)}} \frac{1}{N d!} |[(w_1, w_2)]| \cov(T_{w_1}, T_{w_2}) \\
	&= \sum_{[(w_1, w_2)] \in \tilde \cW_k^{(2)}} \frac{1}{N d!} n (n-1) \cdots (n - |\supp_0(w_1, w_2)| + 1) \\
	&\qquad\qquad \times\left \{ \left(\frac \lambda n\right)^{|\supp_d(w_1, w_2)|} h(w_1, w_2) - \left(\frac \lambda n\right)^{|\supp_d(w_1)| + |\supp_d(w_2)|} g(w_1) g(w_2)\right \}.
\end{align*}
Under the condition that $\supp_d(w_1) \cap \supp_d(w_2) \neq \emptyset$, note that
\[
	|\supp_d(w_1, w_2)| \le |\supp_d(w_1)| + |\supp_d(w_2)| - 1.
\]
Thus, in the limit when $n \to \infty$, we deduce that 
\[
	N d! \var[S_{n, k}] \to \sum_{\substack{[(w_1, w_2)] \in \tilde \cW_k^{(2)} \\ 
	|\supp_0(w_1, w_2)| = |\supp_d(w_1, w_2)| + d}} \lambda^{|\supp_d(w_1, w_2)|} h(w_1, w_2) = :\sigma_k^2.
\]

To show $\sigma_k^2 > 0$ for $k \ge 2$, it remains to indicate that there is a pair of words $(w_1, w_2)$ in $\tilde \cW_k^{(2)}$ such that 
\begin{equation}\label{supp0d2}
	|\supp_0(w_1, w_2)| = |\supp_d(w_1, w_2)| + d.
\end{equation}
Recall that $d \ge 2$. Take $\tilde \sigma_1 = (1,2,\dots,d )$, $\sigma_2 = \{2, \dots, d+1\}$ and $\sigma_3 = \{1,3,\dots,d+1\}$. When $k$ is even, it is clear that a word $w=\tilde \sigma_1 (\sigma_2 \sigma_1)_{k/2 \text{ times}} \in \cW_k$ satisfies
\begin{equation}\label{supp0d}
	|\supp_0(w)| = |\supp_d(w)| + d.
\end{equation}
For the odd case, a word $w = \tilde\sigma_1 \sigma_2 \sigma_3 (\sigma_2 \sigma_1)_{(k-3)/2 \text{ times}} \in \cW_k$ also satisfies the above condition. We have just shown that for any $k \ge 2$, there is a word $w \in \cW_k$ such that the identity~\eqref{supp0d} holds. For such word $w$, the pair $(w,w)$ clearly satisfies the identity~\eqref{supp0d2}. Combining those, we get the following result 
\begin{lem}\label{lem:var-Snk}
For $k \ge 2$, there is a positive constant $\sigma_k^2 > 0$ such that as $n \to \infty$,
\[
	n^d \var[S_{n, k}] \to \sigma_k^2.
\]
\end{lem}

\begin{proof}[Proof of Theorem~{\rm\ref{thm:moments}}]
(ii) is stated in Lemma~\ref{lem:var-Snk}. Note that for $d \ge 2$, $\sum_{n \ge 1} n^{-d} < \infty$. Thus, (ii) implies that
\[
	\E\bigg[ \sum_{n \ge 1} |S_{n, k} - \E[S_{n, k}]|^2 \bigg] < \infty.
\]
Therefore,
\[
	S_{n, k} - \E[S_{n, k}] \to 0, \quad \text{a.s.\ as } n \to \infty.
\]
(i) then follows from the above convergence and Lemma~\ref{lem:mean-Snk}.

{\color{blue} For (iii), note that 
\[
	f = N S_{n, k} = \sum_{w: \text{closed path of length $k+1$}} T_w.
\]
Can it be written as a local statistics with some explicit function $g$?}
\end{proof}
}

\section{Proofs}
\label{sec:proofs}
In each of the following subsections, we prove Theorem~\ref{thm:rdm_deriv}, Theorem~\ref{thm:add-one}, Corollaries~\ref{cor:no_gm_rho} and~\ref{cor:suff_CLT}, and Lemma~\ref{lem:disjoint}, respectively. The last lemma is a technical lemma needed in the proof of Theorem~\ref{thm:rdm_deriv}.

\subsection{Proof of Theorem~\ref{thm:rdm_deriv}}
In this subsection, we prove Theorem~\ref{thm:rdm_deriv}.
The proof is based on the idea used in~\cite{Ca21}.
The starting point of the proof is the following lemma, which is obtained by applying Corollary~3.2 of~\cite{Ch14} to our setting. 
\begin{lem}\label{lem:Chatterjee}
For each $\tau,\tau'\in F_d(\cK_n)$, let $c(\tau,\tau')$ be a constant satisfying that for any $F\subset F_d(\cK_n)\setminus\{\tau\}$ and $F'\subset F_d(\cK_n)\setminus\{\tau'\}$,
\[
\sg_n^{-4}\cov(\Dl_\tau f(\X_n)\Dl_\tau f(\X_n^F),\Dl_{\tau'} f(\X_n)\Dl_{\tau'} f(\X_n^{F'}))\le c(\tau,\tau').
\]
Then,
\begin{align}\label{eq:start}
&d_K\biggl(\cL\biggl(\frac{f(\X_n)-\E[f(\X_n)]}{\sg_n}\biggr),\cN(0,1)\biggr)\nonumber\\
&\le\sqrt2 \Biggl(\sum_{\tau,\tau'\in F_d(\cK_n)}c(\tau,\tau')\Biggr)^{1/4}+\Biggr(\frac1{\sg_n^3}\sum_{\tau\in F_d(\cK_n)}\E[|\Dl_\tau f(\X_n)|^3]\Biggr)^{1/2}.
\end{align}
\end{lem}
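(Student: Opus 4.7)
My plan is to deduce \eqref{eq:start} as a direct application of Chatterjee's Corollary~3.2 in~\cite{Ch14} to the standardized statistic $W \coloneqq (f(\X_n) - \E[f(\X_n)])/\sg_n$, viewed as a function of the mutually independent random variables $\{(b_{n,\tau}, w_{n,\tau}) : \tau \in F_d(\cK_n)\}$. Chatterjee's result controls the Kolmogorov distance to $\cN(0,1)$ by the fourth root of the variance of a certain quadratic functional $T$ of the randomized derivatives (this $1/4$ exponent being the usual cost of smoothing an indicator when bounding $d_K$), together with a square-rooted cubic-type term that is a sum of the third absolute moments of the derivatives.

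In Chatterjee's setup, the relevant quantity takes the form
\[
T = \frac{1}{\sg_n^2}\sum_{\tau \in F_d(\cK_n)} \sum_{F \subset F_d(\cK_n)\setminus\{\tau\}} \alpha(|F|)\, \Dl_\tau f(\X_n)\, \Dl_\tau f(\X_n^F),
\]
where the weights $\alpha(|F|)$ depend only on $|F|$ and are chosen so that $\E[T] = 1$. The corollary then yields a bound of the form
\[
d_K(W,\cN(0,1)) \le 2(\var(T))^{1/4} + C\Bigl(\sg_n^{-3}\sum_{\tau \in F_d(\cK_n)} \E[|\Dl_\tau f(\X_n)|^3]\Bigr)^{1/2},
\]
whose shape already matches \eqref{eq:start} up to absolute constants.

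The central computation is to bound $\var(T)$ using the hypothesis on $c(\tau,\tau')$. Expanding $\var(T) = \E[T^2] - (\E[T])^2$ as a quadruple sum over $(\tau, \tau', F, F')$ of covariances of the products $\Dl_\tau f(\X_n) \Dl_\tau f(\X_n^F)$ and $\Dl_{\tau'} f(\X_n) \Dl_{\tau'} f(\X_n^{F'})$, the uniform-in-$(F,F')$ bound $\cov(\cdots) \le \sg_n^4\, c(\tau,\tau')$ assumed in the statement lets one factor $c(\tau,\tau')$ out of the inner sums. Because the products $\alpha(|F|)\alpha(|F'|)$ form a probability distribution on pairs $(F,F')$ by construction, the resulting weighted sum over $(F,F')$ collapses to $1$, giving $\var(T) \le \sum_{\tau,\tau'} c(\tau,\tau')$. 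Substituting into Chatterjee's inequality yields \eqref{eq:start}.

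The main obstacle I anticipate is simply matching the precise constants---the $\sqrt{2}$ in front of the first term, the exponents $1/4$ and $1/2$, and the normalization of the third-moment term---with the exact form of Corollary~3.2 of~\cite{Ch14}. Once this bookkeeping is in place, the proof reduces to the uniform covariance bound and the collapse of the weights sketched above, together with a routine verification that the independence structure of $\X_n$ fits the hypotheses of Chatterjee's result.
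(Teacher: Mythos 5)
Your proposal is correct and follows essentially the same route as the paper, whose proof of this lemma consists precisely of applying Corollary~3.2 of~\cite{Ch14} to $f(\X_n)$ viewed as a function of the independent pairs $(b_{n,\tau},w_{n,\tau})$, $\tau\in F_d(\cK_n)$. The bound $\var(T)\le\sg_n^4\sum_{\tau,\tau'}c(\tau,\tau')$ via the uniform covariance hypothesis and the collapse of the weights (which sum to one over $F$ for each fixed $\tau$) is exactly the bookkeeping implicit in that citation, so there is nothing substantive to add.
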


As we will see later, our main task will be to find $c(\tau,\tau')$ for disjoint $d$-simplices $\tau,\tau'\in F_d(\cK_n)$.
Before proceeding to the main task, we estimate the second term in the right-hand side of~\eqref{eq:start} using the Lipschitzness of $f$.
For convenience, we define events
\begin{equation}
\label{e:antau}
A_{n,\tau}\coloneqq\{b_{n,\tau}+b'_{n,\tau}=1\}\quad\text{and}\quad\tilde A_{n,\tau}\coloneqq\{b_{n,\tau}\vee b'_{n,\tau}=1\}
\end{equation}
for any $\tau\in F_d(\cK_n)$.
Clearly, $\P(A_{n,\tau})\le\P(\tilde A_{n,\tau})\le2p_n$ holds.
\begin{lem}
It holds that
\[
\sum_{\tau\in F_d(\cK_n)}\E[|\Dl_\tau f(\X_n)|^3]\le J_n^{1/2}n^{d+1}p_n.
\]
\end{lem}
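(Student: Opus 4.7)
My plan is to control $\E[|\Delta_\tau f(\X_n)|^3]$ uniformly in $\tau$ by a constant multiple of $J_n^{1/2} p_n$, and then sum over the $\binom{n}{d+1}$ elements of $F_d(\cK_n)$.

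The key observation is that $\X_n$ and $\X_n^\tau$ differ only at the coordinate $\tau$: the pair $(b_{n,\tau}, w_{n,\tau})$ is replaced by $(b'_{n,\tau}, w'_{n,\tau})$, while all other coordinates coincide. Hence on $\tilde A_{n,\tau}^c = \{b_{n,\tau} = b'_{n,\tau} = 0\}$ neither complex contains $\tau$, so they coincide and $\Delta_\tau f(\X_n) = 0$; on $\tilde A_{n,\tau}$ only the coordinate $\tau$ contributes to the defining sum in the $H$-Lipschitz hypothesis, giving $|\Delta_\tau f(\X_n)| \le H(w_{n,\tau}, w'_{n,\tau})$.

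Cubing, multiplying by $\mathbf{1}_{\tilde A_{n,\tau}}$, and exploiting the independence of the weight pair $(w_{n,\tau}, w'_{n,\tau})$ from the Bernoulli pair $(b_{n,\tau}, b'_{n,\tau})$, I obtain
\[
\E[|\Delta_\tau f(\X_n)|^3] \le \E[H(w_{n,\tau}, w'_{n,\tau})^3] \cdot \P(\tilde A_{n,\tau}).
\]
A single H\"older inequality gives $\E[H^3] \le \E[H^6]^{1/2} \le J_n^{1/2}$, and $\P(\tilde A_{n,\tau}) = 1 - (1-p_n)^2 \le 2 p_n$. Summing over $\tau \in F_d(\cK_n)$ and using $2\binom{n}{d+1} \le n^{d+1}$ (valid for $d \ge 1$) absorbs the factor of two and delivers the claimed bound $J_n^{1/2} n^{d+1} p_n$. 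No real obstacle arises here; the estimate is a clean consequence of Lipschitzness, the product structure of the model, and one moment inequality.
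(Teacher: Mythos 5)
Your argument is correct and follows essentially the same route as the paper: bound $|\Dl_\tau f(\X_n)|\le\1_{\tilde A_{n,\tau}}H(w_{n,\tau},w'_{n,\tau})$ via Lipschitzness, factor using the independence of the Bernoulli and weight variables, apply Cauchy--Schwarz (your H\"older step) to get $\E[H^3]\le J_n^{1/2}$, use $\P(\tilde A_{n,\tau})\le 2p_n$, and absorb the factor $2$ into $2\binom{n}{d+1}\le n^{d+1}$. No gaps.
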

\begin{proof}
From the Lipschitzness of $f$ and the mutual independence of $B_n$, $W_n$, $B'_n$, and $W'_n$,
\begin{align*}
\sum_{\tau\in F_d(\cK_n)}\E[|\Dl_\tau f(\X_n)|^3]
&\le\sum_{\tau\in F_d(\cK_n)}\E[\1_{\tilde A_{n,\tau}}H(w_{n,\tau},w'_{n,\tau})^3]\\
&\le\sum_{\tau\in F_d(\cK_n)}2p_n\E[H(w_{n,\tau},w'_{n,\tau})^3]\\
&\le\binom{n}{d+1}2p_nJ_n^{1/2}.
\end{align*}
In the last line, we use the Cauchy--Schwarz inequality.
\end{proof}

Next, we consider $c(\tau,\tau')$ for $d$-simplices $\tau,\tau'\in F_d(\cK_n)$ with $\tau\cap\tau'\neq\emptyset$.
\begin{lem}
Let $\tau,\tau'\in F_d(\cK_n)$ with $\tau\cap\tau'\neq\emptyset$.
Then, we may take
\[
c(\tau,\tau')=\begin{cases}
8J_n^{2/3}p_n^2/\sg_n^4     &\text{if }\tau\neq\tau',\\
2J_n^{2/3}p_n/\sg_n^4       &\text{if }\tau=\tau'.
\end{cases}
\]
In particular,
\[
\sum_{\substack{\tau,\tau'\in F_d(\cK_n)\\\tau\cap\tau'\neq\emptyset}}c(\tau,\tau') \le\frac C{\sg_n^4}J_n^{2/3}(n^{2d+1}p_n^2+n^{d+1}p_n)
=C\biggl(\frac{n^d}{\sg_n^2}\biggr)^2J_n^{2/3}\biggl(\frac{\lm_n^2}n+\frac{\lm_n}{n^d}\biggr).
\]
\end{lem}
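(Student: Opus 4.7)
My plan is to exploit the pointwise Lipschitz bound $|\Dl_\tau f(\X_n^G)| \le \1_{\tilde A_{n,\tau}} H(w_{n,\tau}, w'_{n,\tau})$, valid for every $\tau \in F_d(\cK_n)$ and every $G \subset F_d(\cK_n) \setminus \{\tau\}$. This is immediate from the $H$-Lipschitz hypothesis since $\X_n^G$ and $\X_n^{G \cup \{\tau\}}$ agree at every simplex except possibly at $\tau$. Throughout, I will use the Jensen-type moment bounds $\E[H(w_{n,\tau}, w'_{n,\tau})^k] \le J_n^{k/6}$ for $k \le 6$, which follow from $J_n = 1 \vee \E[H(w_{n,\tau}, w'_{n,\tau})^6]$; in particular, $\E[H^2] \le J_n^{1/3}$ and $\E[H^4] \le J_n^{2/3}$.

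For the diagonal term $\tau = \tau'$, I apply Cauchy--Schwarz to obtain
\[
|\cov(\Dl_\tau f(\X_n) \Dl_\tau f(\X_n^F),\, \Dl_\tau f(\X_n) \Dl_\tau f(\X_n^{F'}))| \le (\E[X^2]\E[Y^2])^{1/2},
\]
where $X,Y$ denote the two factors in the covariance. Each is bounded pointwise by $\1_{\tilde A_{n,\tau}} H(w_{n,\tau}, w'_{n,\tau})^2$, so by the independence of the $b$-variables from the $w$-variables, $\E[X^2] \le \P(\tilde A_{n,\tau}) \E[H^4] \le 2 p_n J_n^{2/3}$, and similarly for $\E[Y^2]$. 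This yields $c(\tau,\tau) = 2 J_n^{2/3} p_n / \sg_n^4$. For the off-diagonal overlapping case $\tau \ne \tau'$, I instead use the triangle inequality $|\cov(X',Y')| \le \E[|X'Y'|] + \E[|X'|]\E[|Y'|]$, where $X' = \Dl_\tau f(\X_n)\Dl_\tau f(\X_n^F)$ and $Y' = \Dl_{\tau'} f(\X_n)\Dl_{\tau'} f(\X_n^{F'})$. The Lipschitz bounds give $|X'Y'| \le \1_{\tilde A_{n,\tau}} \1_{\tilde A_{n,\tau'}} H(w_{n,\tau}, w'_{n,\tau})^2 H(w_{n,\tau'}, w'_{n,\tau'})^2$, and the four random variables attached to $\tau$ are jointly independent of the four attached to $\tau'$---crucially, this holds even when $\tau$ and $\tau'$ share vertices, since the disorder is indexed by $d$-simplices rather than vertices. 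Factoring the expectation yields $\E[|X'Y'|] \le (2p_n)^2 (J_n^{1/3})^2 = 4 p_n^2 J_n^{2/3}$, and an analogous argument bounds $\E[|X'|]\E[|Y'|]$ by the same quantity, so $c(\tau,\tau') = 8 J_n^{2/3} p_n^2 / \sg_n^4$.

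The summation is then purely combinatorial. There are $\binom{n}{d+1} \le C n^{d+1}$ diagonal pairs and at most $(d+1)\binom{n}{d+1}\binom{n-1}{d} \le C n^{2d+1}$ ordered pairs of distinct overlapping $d$-simplices (pick $\tau$, pick one of its $d+1$ vertices to be the shared one, then pick the remaining $d$ vertices of $\tau'$). Combining these counts with the two values of $c(\tau,\tau')$ and substituting $p_n = \lm_n/n$ produces the factor $(n^d/\sg_n^2)^2 J_n^{2/3} (\lm_n^2/n + \lm_n/n^d)$ in the displayed bound. I do not foresee a serious obstacle here; the only real subtlety is to be careful, in the off-diagonal case, to factor the expectation using independence indexed by $d$-simplices rather than by vertices.
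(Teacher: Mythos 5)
Your proof is correct and follows essentially the same route as the paper: Cauchy--Schwarz for the diagonal case (giving $2J_n^{2/3}p_n$), the triangle inequality plus independence of the disorder attached to $\tau$ and $\tau'$ together with H\"older for the overlapping off-diagonal case (giving $8J_n^{2/3}p_n^2$), and the same combinatorial count of overlapping pairs for the final summation. The only difference is that you spell out the counting step, which the paper leaves implicit.
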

\begin{proof}
We first suppose that $\tau\neq\tau'$.
Let $F\subset F_d(\cK_n)\setminus\{\tau\}$ and $F'\subset F_d(\cK_n)\setminus\{\tau'\}$.
From the Lipschitzness of $f$ and the mutual independence of $B_n$, $W_n$, $B'_n$, and $W'_n$,, we have
\begin{align*}
\E[|\Dl_\tau f(\X_n)\Dl_\tau f(\X_n^F)\Dl_{\tau'} f(\X_n)\Dl_{\tau'} f(\X_n^{F'})|]
&\le\E[\1_{\tilde A_{n,\tau}}H(w_{n,\tau},w'_{n,\tau})^2\1_{\tilde A_{n,\tau'}}H(w_{n,\tau'},w'_{n,\tau'})^2]\\
&\le4p_n^2\E[H(w_{n,\tau},w'_{n,\tau})^2]\E[H(w_{n,\tau'},w'_{n,\tau'})^2] \\
&\le4J_n^{2/3}p_n^2,
\end{align*}
where in the last line, we use H\"older's inequality. In the same manner, we have
\[
\E[|\Dl_\tau f(\X_n)\Dl_\tau f(\X_n^F)|]\E[|\Dl_{\tau'} f(\X_n)\Dl_{\tau'} f(\X_n^{F'})|]\le4J_n^{2/3}p_n^2.
\]
Thus, $\cov(\Dl_\tau f(\X_n)\Dl_\tau f(\X_n^F),\Dl_{\tau'} f(\X_n)\Dl_{\tau'} f(\X_n^{F'}))\le8J_n^{2/3}p_n^2$.

Next, we suppose that $\tau=\tau'$, and let $F$, $F'\subset F_d(\cK_n)\setminus\{\tau\}$.
Then, we have
\begin{align*}
&\cov(\Dl_\tau f(\X_n)\Dl_\tau f(\X_n^F),\Dl_\tau f(\X_n)\Dl_\tau f(\X_n^{F'}))\\
&\le\var(\Dl_\tau f(\X_n)\Dl_\tau f(\X_n^F))^{1/2}\var(\Dl_\tau f(\X_n)\Dl_\tau f(\X_n^{F'}))^{1/2}\\
&\le\E[(\Dl_\tau f(\X_n)\Dl_\tau f(\X_n^F))^2]^{1/2}\E[(\Dl_\tau f(\X_n)\Dl_\tau f(\X_n^{F'}))^2]^{1/2}\\
&\le\E[\1_{\tilde A_{n,\tau}}H(w_{n,\tau},w'_{n,\tau})^4]\\
&\le2J_n^{2/3}p_n,
\end{align*}
which completes the proof.
\end{proof}

Finally, we consider $c(\tau,\tau')$ for disjoint $d$-simplices $\tau,\tau'$ in $\cK_n$.
\begin{lem}\label{lem:disjoint}
Let $\tau,\tau'$ be disjoint $d$-simplices in $\cK_n$.
Then, we may take
\[
c(\tau,\tau')=\frac C{\sg_n^4}[\bigl(J_n^{1/2}\dl_{k,n}^{1/2}+\rho_{k,n}+J_n^{2/3}\gm_{k,n}^{1/2} \bigr) p_n^2 +J_n^{2/3}p_n^3].
\]
In particular,
\[
\sum_{\substack{\tau,\tau'\in F_d(\cK_n)\\\tau\cap\tau'=\emptyset}}c(\tau,\tau') \le C\biggl(\frac{n^d}{\sg_n^2}\biggr)^2\biggl[\bigl(J_n^{1/2}\dl_{k,n}^{1/2}+\rho_{k,n}+J_n^{2/3}\gm_{k,n}^{1/2} \bigr) \lm_n^2 +\frac{J_n^{2/3}\lm_n^3}n\biggr].
\]
\end{lem}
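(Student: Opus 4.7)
Fix disjoint $d$-simplices $\tau,\tau'$ and resampling sets $F\subset F_d(\cK_n)\setminus\{\tau\}$, $F'\subset F_d(\cK_n)\setminus\{\tau'\}$. Abbreviate $D_\tau:=\Dl_\tau f(\X_n)\Dl_\tau f(\X_n^F)$, $E_\tau:=\Dl_\tau f(B_k(\tau,\X_n))\Dl_\tau f(B_k(\tau,\X_n^F))$, and analogously $D_{\tau'},E_{\tau'}$ with $F'$. The starting point is the algebraic decomposition
\[
\cov(D_\tau,D_{\tau'})=\cov(E_\tau,E_{\tau'})+\cov(D_\tau-E_\tau,D_{\tau'})+\cov(E_\tau,D_{\tau'}-E_{\tau'}),
\]
which isolates a local main term, on which the $\rho_{k,n}$ and $\gm_{k,n}$ estimates act, from two remainder covariances, which will carry the $\dl_{k,n}$-dependence.

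For each remainder term I expand $D_\tau-E_\tau=\Dl_\tau f(\X_n)R^F+R^{(0)}\Dl_\tau f(B_k(\tau,\X_n^F))$ with $R^{(0)}:=\Dl_\tau f(\X_n)-\Dl_\tau f(B_k(\tau,\X_n))$ and $R^F$ analogous. Both remainders vanish off $A_{n,\tau}$ (defined in \eqref{e:antau}), and by assumption their conditional second moment given $(b_{n,\tau},b'_{n,\tau},b_{n,\tau'})$ is at most $\dl_{k,n}$ on $A_{n,\tau}$; together with $\P(A_{n,\tau})\le 2p_n$ this gives $\E[(R^{(0)})^2]\le 2p_n\dl_{k,n}$, and similarly for $R^F$. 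Since the other factors are bounded by $\1_{A_{n,\tau}}H(w_{n,\tau},w'_{n,\tau})$, Cauchy--Schwarz applied separately to $R^{(0)}\Dl_\tau f(\X_n^F)$ and $\Dl_\tau f(B_k(\tau,\X_n))R^F$ yields $\E[|D_\tau-E_\tau|]\le Cp_n\dl_{k,n}^{1/2}J_n^{1/6}$. Writing $\cov(D_\tau-E_\tau,D_{\tau'})=\E[(D_\tau-E_\tau)D_{\tau'}]-\E[D_\tau-E_\tau]\E[D_{\tau'}]$ and bounding the centering piece by $\E[|D_\tau-E_\tau|]\E[|D_{\tau'}|]\le Cp_n^2\dl_{k,n}^{1/2}J_n^{1/2}$ (using $\E[|D_{\tau'}|]\le 2p_nJ_n^{1/3}$) produces the $J_n^{1/2}\dl_{k,n}^{1/2}p_n^2$ contribution to $c(\tau,\tau')$; the mixed piece $\E[(D_\tau-E_\tau)D_{\tau'}]$ is handled by further splitting $D_{\tau'}=E_{\tau'}+(D_{\tau'}-E_{\tau'})$ and applying the same Cauchy--Schwarz technique to each resulting subterm. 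For the main term $\cov(E_\tau,E_{\tau'})$, condition on $Z=(b_{n,\tau},b'_{n,\tau},b_{n,\tau'},b'_{n,\tau'})$ and use
\[
\cov(E_\tau,E_{\tau'})=\E[\cov(E_\tau,E_{\tau'}\mid Z)]+\cov(\E[E_\tau\mid Z],\E[E_{\tau'}\mid Z]).
\]
The first summand vanishes off $A_{n,\tau}\cap A_{n,\tau'}$ and is bounded by $\rho_{k,n}$ there by definition; since disjointness of $\tau,\tau'$ makes the four bits independent with $\P(A_{n,\tau}\cap A_{n,\tau'})\le 4p_n^2$, this yields the $\rho_{k,n}p_n^2$ term. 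For the second summand I introduce the event $G$ that at least one pair among the four neighborhoods $B_k(\tau,\X_n),B_k(\tau,\X_n^F),B_k(\tau',\X_n),B_k(\tau',\X_n^{F'})$ share a $d$-simplex; on $G^c$ the conditional expectations $\E[E_\tau\mid Z]$ and $\E[E_{\tau'}\mid Z]$ depend on disjoint pools of $(b_{n,\sg},b'_{n,\sg},w_{n,\sg},w'_{n,\sg})$ and are therefore independent, while on $G$ Cauchy--Schwarz against the crude $L^2$ bound $\E[E_\tau^2]\le Cp_nJ_n^{2/3}$ and $\P(G)\le C\gm_{k,n}$ (via a union bound over the $(d+1)^2$ ordered pairs of $(d-1)$-faces of $\tau,\tau'$ using translation invariance of $\gm_{k,n}$) yields the $J_n^{2/3}\gm_{k,n}^{1/2}p_n^2$ contribution. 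The residual $J_n^{2/3}p_n^3$ term appears as a lower-order byproduct when collecting the subterms of the mixed covariances. Finally, summing $c(\tau,\tau')$ over the $\le n^{2(d+1)}/((d+1)!)^2$ disjoint pairs and substituting $p_n=\lm_n/n$ produces the stated global sum.

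The most delicate step is establishing the conditional independence of $\E[E_\tau\mid Z]$ and $\E[E_{\tau'}\mid Z]$ on the separation event $G^c$. This requires carefully identifying the $\sigma$-algebras on which each conditional expectation depends: $E_\tau$ is a function of the randomness indexed by $B_k(\tau,\X_n)\cup B_k(\tau,\X_n^F)\cup\{\tau\}$, and on $G^c$ this index set is disjoint from its analogue for $E_{\tau'}$; but the neighborhoods are themselves random, so the argument must be handled via a measure-theoretic restriction-to-$G^c$ rather than a naive factoring. A secondary technical challenge is combining the conditional $L^2$-bound $\dl_{k,n}$ (which conditions on bits only) with the Lipschitz bound in $H$ (which involves weights) in a way that delivers the correct exponents $\dl_{k,n}^{1/2}$ and $J_n^{1/2}$; this is why using $\E[|D_\tau-E_\tau|]$ together with $\E[|D_{\tau'}|]$, rather than $\E[(D_\tau-E_\tau)^2]^{1/2}\E[D_{\tau'}^2]^{1/2}$, is essential for the centering piece.
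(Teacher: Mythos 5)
Your overall architecture (split each randomized derivative into a local part plus a remainder, extract the $\dl_{k,n}$-contribution from the remainder covariances via conditional Cauchy--Schwarz, and split the all-local covariance with the conditional covariance formula given the bits, using $\rho_{k,n}$ for the conditional-covariance piece) is the same as the paper's, and those parts of your sketch are essentially sound. The genuine gap is in the step that must produce the $J_n^{2/3}\gm_{k,n}^{1/2}p_n^2$ term, i.e.\ the bound on $\cov(\E[E_\tau\mid Z],\E[E_{\tau'}\mid Z])$. Both conditional expectations are deterministic functions of the \emph{same} four-bit vector $Z=(\bfb_{n,\tau},\bfb_{n,\tau'})$; their correlation comes solely from the fact that $\E[E_\tau\mid Z]$ depends on $\bfb_{n,\tau'}$ (and symmetrically). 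Multiplying by $\1_{G^c}$, where $G$ is your neighbourhood-intersection event, cannot decouple them: $G$ is not $Z$-measurable, $\1_{G^c}\E[E_\tau\mid Z]$ is no longer a function of randomness disjoint from that of $\1_{G^c}\E[E_{\tau'}\mid Z]$, and you flag this yourself as the delicate point without supplying an actual argument. Moreover, even if such a restriction argument were carried out, your event $G$ (two $k$-neighbourhoods sharing a $d$-simplex) entails a path of length up to $2k$ between faces of $\tau$ and $\tau'$, so $\P(G)$ is of order $\gm_{2k,n}$, not $\gm_{k,n}$; your route would at best give $J_n^{2/3}\gm_{2k,n}^{1/2}p_n^2$, which is weaker than the stated lemma. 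The paper's proof avoids both issues by a cleaner device: inside the conditional expectations it replaces $\X_n,\X_n^F$ by $\X_n-\tau',\X_n^F-\tau'$ (and symmetrically deletes $\tau$ in the $\tau'$-factor), so the two resulting quantities are functions of $\bfb_{n,\tau}$ alone and of $\bfb_{n,\tau'}$ alone, hence exactly independent; the replacement error is then controlled by $\P(\tau'\in B_k(\tau,\X_n)\cup B_k(\tau,\X_n^F)\mid\bfb_{n,\tau},\bfb_{n,\tau'})\le 2(d+1)^2\gm_{k,n}$, and conditional Cauchy--Schwarz yields exactly $CJ_n^{2/3}p_n^2\gm_{k,n}^{1/2}$.

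A secondary weakness: you assert that the remainders (and implicitly $E_\tau$) vanish off $A_{n,\tau}$. Under the reading of the Lipschitz hypothesis that the paper's own estimates use, namely $|\Dl_\tau f(\X_n^F)|\le\1_{\tilde A_{n,\tau}}H(w_{n,\tau},w'_{n,\tau})$, the randomized derivative need not vanish on $\{b_{n,\tau}=b'_{n,\tau}=1\}$ (the weight of $\tau$ still changes there); this event, of probability $p_n^2$, is precisely the source of the $J_n^{2/3}p_n^3$ term, which your write-up attributes only vaguely to ``collecting subterms''. That is a bookkeeping matter rather than a fatal flaw, but it needs to be made explicit, whereas the missing decoupling argument for the $\gm_{k,n}^{1/2}$ term is a real gap.
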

We defer the proof of this lemma to Section \ref{sec:prooflemmadisjoint}.
Combining Lemmas~\ref{lem:Chatterjee}--\ref{lem:disjoint}, we can immediately obtain Theorem~\ref{thm:rdm_deriv}.
\begin{proof}[Proof of Theorem~\ref{thm:rdm_deriv}]
Combining Lemmas~\ref{lem:Chatterjee}--\ref{lem:disjoint},
\begin{align*}
&d_K\biggl(\cL\biggl(\frac{f(\X_n)-\E[f(\X_n)]}{\sg_n}\biggr),\cN(0,1)\biggr)\\
&\le\sqrt2\Biggl(\sum_{\substack{\tau,\tau'\in F_d(\cK_n)\\\tau\cap\tau'\neq\emptyset}}c(\tau,\tau')+\sum_{\substack{\tau,\tau'\in F_d(\cK_n)\\\tau\cap\tau'=\emptyset}}c(\tau,\tau')\Biggr)^{1/4}+\Biggr(\frac1{\sg_n^3}\sum_{\tau\in F_d(\cK_n)}\E[|\Dl_\tau f(\X_n)|^3]\Biggr)^{1/2}\\
&\le C\biggl(\frac{n^d}{\sg_n^2}\biggr)^{1/2}\biggl[J_n^{2/3}\biggl(\frac{\lm_n^2}n+\frac{\lm_n}{n^d}\biggr)+\bigl(J_n^{1/2}\dl_{k,n}^{1/2}+\rho_{k,n}+J_n^{2/3}\gm_{k,n}^{1/2}\bigr)\lm_n^2 +\frac{J_n^{2/3}\lm_n^3}n\biggr]^{1/4}
+(\sg_n^{-3}J_n^{1/2}n^{d+1}p_n)^{1/2}\\
&=C\biggl(\frac{n^d}{\sg_n^2}\biggr)^{1/2}\biggl[\bigl(J_n^{1/2}\dl_{k,n}^{1/2}+\rho_{k,n}+J_n^{2/3}\gm_{k,n}^{1/2}\bigr)\lm_n^2+J_n^{2/3}\biggl(\frac{\lm_n^2}n+\frac{\lm_n}{n^d}+\frac{\lm_n^3}n\biggr)\biggr]^{1/4}
+\biggl(\frac{n^d}{\sg_n^2}\biggr)^{3/4}\frac{J_n^{1/4}\lm_n^{1/2}}{n^{d/4}},
\end{align*}
which completes the proof.
\end{proof}

\subsection{Proof of Theorem~\ref{thm:add-one}}\label{ssec:add-one}
In this subsection, we prove Theorem~\ref{thm:add-one} using Theorem~\ref{thm:rdm_deriv}.
Recall that for each $k\in\N$,
\begin{align*}
\tilde\dl_{k,n}&\coloneqq\max_{i=0,1}\E[\{D_\tau f(\X_n)-D_\tau f(B_k(\tau,\X_n))\}^2\mid b_{n,\tau'}=i]
\shortintertext{and}
\tilde\rho_{k,n}&\coloneqq\sup_{F,F'\subset F_d(\cK_n)\setminus\{\tau,\tau'\}}\cov(D_\tau f(B_k(\tau,\X_n))D_\tau f(B_k(\tau,\X_n^F)),D_{\tau'} f(B_k(\tau',\X_n))D_{\tau'} f(B_k(\tau',\X_n^{F'}))),
\end{align*}
where $\tau,\tau'\in F_d(\cK_n)$ are disjoint $d$-simplices. For $\tau\in F_d(\cK_n)$ and $F\subset F_d(\cK_n)\setminus\{\tau\}$, we define
\[
L_k^\tau(\X_n^F)=\Dl_\tau f(B_k(\tau,\X_n^F))
\quad\text{and}\quad
\hat L_k^\tau(\X_n^F)=D_\tau f(B_k(\tau,\X_n^F)).
\]
Here, $L$ stands for ``local''.
In the following, we also write
\begin{equation}
\label{e:boldbntau}    
\bfb_{n,\tau}=(b_{n,\tau},b'_{n,\tau})
\end{equation}
for $\tau\in F_d(\cK_n)$ for simplicity. Recall definitions of $A_{n,\tau}$ and $\tilde A_{n,\tau}$ from \eqref{e:antau}.
\begin{proof}[Proof of Theorem~\ref{thm:add-one}]
Let $\tau,\tau'\in F_d(\cK_n)$ be disjoint $d$-simplices, and let $F\subset F_d(\cK_n)\setminus\{\tau\}$ and $F'\subset F_d(\cK_n)\setminus\{\tau'\}$ be fixed. Below, we write $\overset{*}{\sum}_{i_1,i_2,i_3}$ for summations taken over all $(i_1,i_2,i_3) \in\{0,1\}^3$ such that $i_1+i_2=1$.
Since
\begin{align*}
&\1_{A_{n,\tau}}\E[\{\Dl_\tau f(\X_n^F)-\Dl_\tau f(B_k(\tau,\X_n))\}^2\mid\bfb_{n,\tau},b_{n,\tau'}]\\
&=\sum^*_{i_1,i_2,i_3}\E[\{\Dl_\tau f(\X_n)-\Dl_\tau f(B_k(\tau,\X_n))\}^2\mid\bfb_{n,\tau}=(i_1,i_2),b_{n,\tau'}=i_3]\1_{\{(\bfb_{n,\tau}=(i_1,i_2), b_{n,\tau'}=i_3\}}\\
&=\sum^*_{i_1,i_2,i_3}\E[\{D_\tau f(\X_n)-D_\tau f(B_k(\tau,\X_n))\}^2\mid\bfb_{n,\tau}=(i_1,i_2),b_{n,\tau'}=i_3]\1_{\{(\bfb_{n,\tau}=(i_1,i_2), b_{n,\tau'}=i_3\}}\\
&=\sum^*_{i_1,i_2,i_3}\E[\{D_\tau f(\X_n)-D_\tau f(B_k(\tau,\X_n))\}^2\mid b_{n,\tau'}=i_3]\1_{\{(\bfb_{n,\tau}=(i_1,i_2),b_{n,\tau'}=i_3\}}\\
&\le\1_{A_{n,\tau}}\tilde\dl_{k,n},
\end{align*}
we may take $\tilde\dl_{k,n}$ as the constant $\dl_{k,n}$ in Theorem~\ref{thm:rdm_deriv}.

We next show that on the event $A_{n,\tau}\cap A_{n,\tau'}$,
\begin{equation}\label{eq:rho_est}
\cov(L_k^\tau(\X_n)L_k^\tau(\X_n^F),L_k^{\tau'}(\X_n)L_k^{\tau'}(\X_n^{F'})\mid\bfb_{n,\tau},\bfb_{n,\tau'})
\le\tilde\rho_{k,n}+CJ_n^{2/3}\gm_{k,n}^{1/3}.
\end{equation}
Note first that
\begin{align*}
|L_k^\tau(\X_n)L_k^\tau(\X_n^F)|,|L_k^\tau(\X_n-\tau')L_k^\tau(\X_n^F-\tau')|&\le\1_{\tilde A_{n,\tau}} H(w_{n,\tau},w'_{n,\tau})^2
\shortintertext{and}
|L_k^{\tau'}(\X_n)L_k^{\tau'}(\X_n^{F'})|,|L_k^{\tau'}(\X_n-\tau)L_k^{\tau'}(\X_n^{F'}-\tau)|&\le\1_{\tilde A_{n,\tau'}} H(w_{n,\tau'},w'_{n,\tau'})^2
\end{align*}
from the Lipschitzness of $f$.
Furthermore, since
\begin{align*}
&\{L_k^\tau(\X_n)L_k^\tau(\X_n^F)\neq L_k^\tau(\X_n-\tau')L_k^\tau(\X_n^F-\tau')\}\\
&=\{\Dl_\tau f(B_k(\tau,\X_n))\Dl_\tau f(B_k(\tau,\X_n^F))\neq \Dl_\tau f(B_k(\tau,\X_n-\tau'))\Dl_\tau f(B_k(\tau,\X_n^F-\tau'))\}\\
&\subset\{\tau'\in B_k(\tau,\X_n)\}\cup\{\tau'\in B_k(\tau,\X_n^F)\}\\
&\subset\bigcup_{\substack{\sg\colon\text{$(d-1)$-face of }\tau\\\sg'\colon\text{$(d-1)$-face of }\tau'}}\{\text{$\sg\xleftrightarrow[]{\X_n}\sg'$ or $\sg\xleftrightarrow[]{\X_n^F}\sg'$ by a path of length at most $k$}\},
\end{align*}
we have
\begin{align}\label{eq:prob_LL-LL}
&\P(L_k^\tau(\X_n)L_k^\tau(\X_n^F)\neq L_k^\tau(\X_n-\tau')L_k^\tau(\X_n^F-\tau')\mid\bfb_{n,\tau},\bfb_{n,\tau'})\nonumber\\
&=\1_{\tilde A_{n,\tau}}\P(L_k^\tau(\X_n)L_k^\tau(\X_n^F)\neq L_k^\tau(\X_n-\tau')L_k^\tau(\X_n^F-\tau')\mid\bfb_{n,\tau},\bfb_{n,\tau'})\nonumber\\
&\le\1_{\tilde A_{n,\tau}}\sum_{\substack{\sg\colon\text{$(d-1)$-face of }\tau\\\sg'\colon\text{$(d-1)$-face of }\tau'}}\P(\text{$\sg\xleftrightarrow[]{\X_n}\sg'$ or $\sg\xleftrightarrow[]{\X_n^F}\sg'$ by a path of length at most $k$})\nonumber\\
&\le2\1_{\tilde A_{n,\tau}}\sum_{\substack{\sg\colon\text{$(d-1)$-face of }\tau\\\sg'\colon\text{$(d-1)$-face of }\tau'}}\P(\text{$\sg\xleftrightarrow[]{\X_n}\sg'$ by a path of length at most $k$})\nonumber\\
&=2(d+1)^2\gm_{k,n}\1_{\tilde A_{n,\tau}}.
\end{align}
Similarly, we have
\[
\P(L_k^{\tau'}(\X_n)L_k^{\tau'}(\X_n^{F'})\neq L_k^{\tau'}(\X_n-\tau)L_k^{\tau'}(\X_n^{F'}-\tau)\mid\bfb_{n,\tau},\bfb_{n,\tau'})
\le2(d+1)^2\gm_{k,n}\1_{\tilde A_{n,\tau'}}.
\]
From the above estimates, a straightforward calculation involving the conditional H\"older inequality yields
\begin{align}\label{eq:cdcov_modi}
&\cov(L_k^\tau(\X_n)L_k^\tau(\X_n^F),L_k^{\tau'}(\X_n)L_k^{\tau'}(\X_n^{F'})\mid\bfb_{n,\tau},\bfb_{n,\tau'})\nonumber\\
&\le\cov(L_k^\tau(\X_n-\tau')L_k^\tau(\X_n^F-\tau'),L_k^{\tau'}(\X_n-\tau)L_k^{\tau'}(\X_n^{F'}-\tau)\mid\bfb_{n,\tau},\bfb_{n,\tau'})+CJ_n^{2/3}\gm_{k,n}^{1/3}\1_{\tilde A_{n,\tau}}\1_{\tilde A_{n,\tau'}}.
\end{align}
We now use $\overset{*}{\sum}_{i_1,i_2,i_3,i_4}$ to denote summations taken over all $i_1,i_2,i_3,i_4$ such that $i_1+i_2=i_3+i_4=1$.
Furthermore,
\begin{align}\label{eq:cdcov_cov}
&\1_{A_{n,\tau}}\1_{A_{n,\tau'}}\cov(L_k^\tau(\X_n-\tau')L_k^\tau(\X_n^F-\tau'),L_k^{\tau'}(\X_n-\tau)L_k^{\tau'}(\X_n^{F'}-\tau)\mid\bfb_{n,\tau},\bfb_{n,\tau'})\nonumber\\
&=\sum^*_{i_1,i_2,i_3,i_4}\cov\left(L_k^\tau(\X_n-\tau')L_k^\tau(\X_n^F-\tau'),L_k^{\tau'}(\X_n-\tau)L_k^{\tau'}(\X_n^{F'}-\tau)\relmiddle|\substack{\bfb_{n,\tau}=(i_1,i_2)\\\bfb_{n,\tau'}=(i_3,i_4)}\right)\1_{\Bigl\{\substack{\bfb_{n,\tau}=(i_1,i_2)\\\bfb_{n,\tau'}=(i_3,i_4)}\Bigr\}}\nonumber\\
&=\sum^*_{i_1,i_2,i_3,i_4}\cov\left(\hat L_k^\tau(\X_n-\tau')\hat L_k^\tau(\X_n^F-\tau'),\hat L_k^{\tau'}(\X_n-\tau)\hat L_k^{\tau'}(\X_n^{F'}-\tau)\relmiddle|\substack{\bfb_{n,\tau}=(i_1,i_2)\\\bfb_{n,\tau'}=(i_3,i_4)}\right)\1_{\Bigl\{\substack{\bfb_{n,\tau}=(i_1,i_2)\\\bfb_{n,\tau'}=(i_3,i_4)}\Bigr\}}\nonumber\\
&=\sum^*_{i_1,i_2,i_3,i_4}\cov(\hat L_k^\tau(\X_n-\tau')\hat L_k^\tau(\X_n^F-\tau'),\hat L_k^{\tau'}(\X_n-\tau)\hat L_k^{\tau'}(\X_n^{F'}-\tau))\1_{\Bigl\{\substack{\bfb_{n,\tau}=(i_1,i_2)\\\bfb_{n,\tau'}=(i_3,i_4)}\Bigr\}}\nonumber\\
&=\1_{A_{n,\tau}}\1_{A_{n,\tau'}}\cov(\hat L_k^\tau(\X_n-\tau')\hat L_k^\tau(\X_n^F-\tau'),\hat L_k^{\tau'}(\X_n-\tau)\hat L_k^{\tau'}(\X_n^{F'}-\tau))\nonumber\\
&=\1_{A_{n,\tau}}\1_{A_{n,\tau'}}\cov(\hat L_k^\tau(\X_n-\tau')\hat L_k^\tau(\X_n^{F\setminus\{\tau'\}}-\tau'),\hat L_k^{\tau'}(\X_n-\tau)\hat L_k^{\tau'}(\X_n^{F'\setminus\{\tau\}}-\tau)).
\end{align}
Similar to~\eqref{eq:cdcov_modi}, a simple calculation involving H\"older's inequality yields
\begin{align}\label{eq:cov_modi}
&\cov(\hat L_k^\tau(\X_n-\tau')\hat L_k^\tau(\X_n^{F\setminus\{\tau'\}}-\tau'),\hat L_k^{\tau'}(\X_n-\tau)\hat L_k^{\tau'}(\X_n^{F'\setminus\{\tau\}}-\tau))\nonumber\\
&\le\cov(\hat L_k^\tau(\X_n)\hat L_k^\tau(\X_n^{F\setminus\{\tau'\}}),\hat L_k^{\tau'}(\X_n)\hat L_k^{\tau'}(\X_n^{F'\setminus\{\tau\}}))
+CJ_n^{2/3}\gm_{k,n}^{1/3}.
\end{align}
Combining~\eqref{eq:cdcov_modi},~\eqref{eq:cdcov_cov}, and~\eqref{eq:cov_modi} with the definition of $\tilde\rho_{k,n}$, we obtain~\eqref{eq:rho_est}.
Consequently, we may take $\tilde\dl_{k,n}$ and $\tilde\rho_{k,n}+CJ_n^{2/3}\gm_{k,n}^{1/3}$ as the constant $\dl_{k,n}$ and $\rho_{k,n}$ in Theorem~\ref{thm:rdm_deriv}, respectively, which completes the proof as mentioned in Remark~\ref{rem:add-one}.
\end{proof}

\subsection{Proof of Corollaries~\ref{cor:no_gm_rho} and~\ref{cor:suff_CLT}}
In this subsection, we prove Corollaries~\ref{cor:no_gm_rho} and~\ref{cor:suff_CLT}.
We begin by proving Corollary~\ref{cor:no_gm_rho} by estimating $\gm_{k,n}$ and $\tilde\rho_{k,n}$.
\begin{prop}\label{prop:gm_est}
For $k\in\N$, it holds that
\[
\gm_{k,n}
\le\frac{k^{d+1}(1\vee d\lm_n)^k}{n^d}.
\]
In particular, if $k\le n$, then
\[
\gm_{k,n}
\le\frac{k^2(1\vee d\lm_n)^k}n.
\]
\end{prop}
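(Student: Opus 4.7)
The plan is to prove the bound by a union bound over the length of the path together with a direct count of the number of possible paths from $\sigma$ to $\sigma'$ in the complete complex $\cK_n$, then optimizing over the exponents in the binomial/multinomial factors. Since each path of length $j$ is a sequence of $j$ distinct $d$-simplices, each of which belongs to $\X_n$ with probability $p_n$ independently, the probability that such a path survives is at most $p_n^j = (\lm_n/n)^j$. Hence
\[
\gm_{k,n} \le \sum_{j=1}^k N_j \, p_n^j,
\]
where $N_j$ denotes the number of paths $(\sg_0,\sg_1,\dots,\sg_j)$ in $\cK_n$ of length exactly $j$ with $\sg_0=\sg$ and $\sg_j=\sg'$.

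To bound $N_j$, I would encode each path as a sequence of $j$ ``moves'', where the $i$-th move specifies a vertex $u_i\in\sg_{i-1}$ to remove (at most $d$ choices) and a vertex $v_i\in[n]\setminus\sg_{i-1}$ to add (at most $n$ choices). The disjointness of $\sg$ and $\sg'$ forces $N_j=0$ for $j<d$ (since $|\sg\cup\sg'|=2d$ while a path of length $j$ uses at most $d+j$ vertices). Moreover, each of the $d$ vertices of $\sg'$ must appear as $v_i$ for some step $i$ (for the \emph{last} time at which it is added), at $d$ distinct time steps chosen from $\{1,\dots,j\}$; at these steps the added vertex is forced, while at the remaining $j-d$ steps the added vertex has at most $n$ choices. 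This yields
\[
N_j \le \frac{j!}{(j-d)!}\cdot d^j \cdot n^{j-d} \le j^d\, d^j\, n^{j-d}\quad(j\ge d).
\]

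Substituting and pulling out the $n^{-d}$ factor gives
\[
\gm_{k,n}\le \frac1{n^d}\sum_{j=d}^k j^d (d\lm_n)^j \le \frac{k^d}{n^d}\sum_{j=d}^k (d\lm_n)^j \le \frac{k^{d+1}(1\vee d\lm_n)^k}{n^d},
\]
where the last step uses the straightforward estimate $\sum_{j=d}^k(d\lm_n)^j\le k(1\vee d\lm_n)^k$, valid in both regimes $d\lm_n\le 1$ and $d\lm_n>1$. The ``in particular'' claim follows immediately by rewriting $k^{d+1}/n^d = (k^2/n)\cdot(k/n)^{d-1}$ and noting $(k/n)^{d-1}\le 1$ when $k\le n$.

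I do not expect a genuine obstacle here; the subtlety is purely combinatorial, namely identifying that the $d$ vertices of $\sg'$ must appear among the $v_i$'s in order to account for the factor $n^{j-d}$ (rather than the naive $n^j$), which is exactly what produces the correct denominator $n^d$. The remaining estimates are routine.
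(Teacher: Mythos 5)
Your proof is correct and follows essentially the same route as the paper: a union bound over path lengths $d\le j\le k$, a count of candidate paths of order $j^d d^j n^{j-d}$ in which the factor $n^{-d}$ comes from the fact that the $d$ vertices of $\sg'$ are forced at $d$ designated addition steps, and the elementary bound $\sum_{j=d}^k (d\lm_n)^j\le k(1\vee d\lm_n)^k$. Your bookkeeping (assigning to each vertex of $\sg'$ the last step at which it is added, giving the factor $j!/(j-d)!\,d^j n^{j-d}$) is just a slightly more explicit version of the paper's count $\binom{j}{d}(dn)^{j-d}d^d$, and leads to the identical final estimate, including the $k\le n$ consequence.
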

\begin{proof}
Let $\sg$ and $\sg'$ be disjoint $(d-1)$-simplices in $\cK_n$.
Recall that $\gm_{k,n}$ is the probability that $\sg$ and $\sg'$ are connected by a path of length at most $k$. Therefore,
\begin{align}\label{eq:gm_est}
\gm_{k,n}
&\le\sum_{i=d}^k\P\bigl(\text{$\sg\xleftrightarrow[]{\X_n}\sg'$ by a path of length $i$}\bigr)\nonumber\\
&\le \sum_{i=d}^k\binom id(dn)^{i-d}d^dp_n^i
= \frac1{n^d}\sum_{i=d}^k\binom id(d\lm_n)^i
\le \frac{k^d}{n^d}\sum_{i=d}^k(d\lm_n)^i
= \frac{k^{d+1}(1\vee d\lm_n)^k}{n^d}.
\end{align}
For the second inequality, we note that there are at most $nd$ ways of choosing the next $(d-1)$-simplex in the generating process of a path and $\binom id d^d$ is an upper bound on the number of the ways that at some $d$ steps in the generating process the vertices of $\sigma'$ are chosen so that the last $(d-1)$-simplex of the path coincides with $\sg'$.
\end{proof}

Next, we will estimate
\[
\tilde\rho_{k,n}\coloneqq\sup_{F,F'\subset F_d(\cK_n)\setminus\{\tau,\tau'\}}\cov(D_\tau f(B_k(\tau,\X_n))D_\tau f(B_k(\tau,\X_n^F)),D_{\tau'} f(B_k(\tau',\X_n))D_{\tau'} f(B_k(\tau',\X_n^{F'}))),
\]
where $\tau,\tau'$ are disjoint $d$-simplices in $\cK_n$ and again the covariance is independent of the chosen $d$-simplices.
\begin{prop}\label{prop:tilrho_est}
For $k \in \N$ with $k \leq n$, it holds that
\[
\tilde\rho_{k,n}
\le CJ_n^{2/3}\biggl(\frac{k^5(1\vee  d\lm_n)^{2k}}n\biggr)^{1/3}.
\]
\end{prop}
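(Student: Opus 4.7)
The proof rests on \emph{spatial decoupling}: each factor $D_\tau f(B_k(\tau,\cdot))$ depends only on the randomness in the $k$-neighborhood of $\tau$ in the relevant complex, so when those neighborhoods around $\tau$ and $\tau'$ (across the four complexes appearing in $\tilde\rho_{k,n}$) are disjoint, the two factors in the covariance become functions of disjoint blocks of random variables, and are hence independent. The plan is to quantify the failure probability of this disjointness and to pay the corresponding error via H\"older's inequality.

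First I would set $U := D_\tau f(B_k(\tau,\X_n))\,D_\tau f(B_k(\tau,\X_n^F))$ and $V := D_{\tau'} f(B_k(\tau',\X_n))\,D_{\tau'} f(B_k(\tau',\X_n^{F'}))$. The $H$-Lipschitz assumption on $f$ gives $|D_\tau f(B_k(\tau,\cdot))|\le H(w_{n,\tau},w'_{n,\tau})$, and hence $\E[|U|^3],\,\E[|V|^3]\le J_n$. I would then introduce the decoupling event $E$ as the event that, in the enlarged complex $\hat\X := \X_n\cup\X_n^F\cup\X_n^{F'}$, no $(d-1)$-face of $\tau$ is linked by a path of length at most $2k$ to any $(d-1)$-face of $\tau'$. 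On $E$, every pair of balls $B_k(\tau,\X_n^{(\cdot)})$ and $B_k(\tau',\X_n^{(\cdot)})$ is simplex-disjoint, so a simultaneous BFS from $\tau$ and from $\tau'$ reveals disjoint subcollections of the four-tuples $\{(b_{n,s},w_{n,s},b'_{n,s},w'_{n,s})\}_{s\in F_d(\cK_n)}$. Using this, I would construct a coupling to an independent pair $(\tilde U,\tilde V)$ with the correct marginals by resampling the unseen randomness independently; then $\tilde U=U$ and $\tilde V=V$ on $E$, and in particular $\E[\tilde U\tilde V]=\E[\tilde U]\,\E[\tilde V]$.

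Writing $\cov(U,V)=\bigl(\E[UV]-\E[\tilde U\tilde V]\bigr)-\bigl(\E[U]-\E[\tilde U]\bigr)\E[\tilde V]-\E[U]\bigl(\E[V]-\E[\tilde V]\bigr)$ and noting that each difference is supported on $E^c$, H\"older's inequality with exponent $3$ yields $|\cov(U,V)|\le C\,J_n^{2/3}\,\P(E^c)^{1/3}$ (cross terms of order $\P(E^c)^{2/3}$ are lower order and absorbed). It remains to estimate $\P(E^c)$ by a path-counting argument mirroring Proposition~\ref{prop:gm_est}, now applied to $\hat\X$ (in which each $d$-simplex is present with probability at most $2p_n$) and summed over the $(d+1)^2$ pairs of $(d-1)$-faces of $(\tau,\tau')$, path lengths up to $2k$, and the $O(k)$ choices of the split point along each path separating its $\tau$-side from its $\tau'$-side; carried out carefully this yields $\P(E^c)\le C\,k^5(1\vee d\lm_n)^{2k}/n$. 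The main obstacle is the coupling step itself: disjointness of the supports of $U$ and $V$ is a random event, so one cannot naively restrict to disjoint $\sigma$-algebras, and the BFS-exploration construction is what supplies the required dynamic revelation of randomness together with the independent extension.
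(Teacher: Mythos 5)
Your overall strategy (a one-shot decoupling: couple the pair of local factors to an independent pair off a bad event, then pay $CJ_n^{2/3}\P(\text{bad})^{1/3}$ by H\"older) is genuinely different from the paper's proof, which never constructs a coupling: the paper iterates over scales, using the one-step conditional independence of the $(i+1)$-neighbourhoods given the unweighted $i$-neighbourhoods on the vertex-disjointness event $I_i$ (Lemma~\ref{lem:Cao_Lem6.8}), and pays $CJ_n^{2/3}\P(I_i^c)^{1/3}$ at each scale $i=1,\dots,k-1$; the factor $k^5$ is exactly the cost of summing these contributions. However, as written your argument has a genuine gap at its central step. Your decoupling event $E$ (no path of length at most $2k$ in $\hat\X$ joining faces of $\tau$ and $\tau'$) does guarantee that the four balls share no $d$-simplices, but it does \emph{not} guarantee that the two BFS explorations ``reveal disjoint subcollections of the four-tuples''. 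To certify a ball $B_k(\tau,\cdot)$ the exploration must also query the marks of \emph{absent} $d$-simplices adjacent to $(d-1)$-simplices in the ball, and a single absent $d$-simplex can contain one $(d-1)$-face lying in the $\tau$-neighbourhood and another lying in the $\tau'$-neighbourhood without creating any connecting path (it is absent), so both explorations query its mark while $E$ holds; on such configurations your resampling changes $V$, and $\tilde V=V$ fails on $E$. The correct event is vertex-disjointness of the $k$-neighbourhoods (the paper's $I_k$, Lemma~\ref{lem:nonintersect}), and even then one needs $d\ge2$: a commonly queried $d$-simplex would need $d$ vertices in each neighbourhood, impossible only because $2d>d+1$. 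You never invoke any dimension restriction, and for $d=1$ the claim ``disjoint balls $\Rightarrow$ disjoint revealed randomness'' is simply false; this is precisely where the paper's Lemma~\ref{lem:Cao_Lem6.8} uses $d\ge2$.

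There is a second, quantitative gap: estimating $\P(E^c)$ by path counting in the union complex $\hat\X$ with per-simplex probability $2p_n$ produces, following the computation in Proposition~\ref{prop:gm_est}, a factor $(1\vee 2d\lm_n)^{2k}$ rather than $(1\vee d\lm_n)^{2k}$; the discrepancy is up to $4^k$, which cannot be absorbed into $Ck^5$ and would invalidate the stated bound and its downstream use (e.g.\ Corollary~\ref{cor:suff_CLT}(3), where $\lm_n\le 1/d$ makes $(1\vee d\lm_n)^{2k}=1$ but $4^k/n$ need not vanish for $k=o(n^{1/5})$). The fix is the two-path (split-point) structure of Lemma~\ref{lem:nonintersect}: each of the two paths lies in a single complex and the two use disjoint $d$-simplices, so every step costs only $p_n$. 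With both repairs --- replace $E$ by vertex-disjointness of the $k$-neighbourhoods, invoke $d\ge2$ for disjointness of the queried marks, justify the stopping-set/resampling step, and count as in Lemma~\ref{lem:nonintersect} --- your single-step coupling does go through and in fact yields $k^2$ in place of $k^5$, which is a nice feature compared with the paper's iterative argument; but as submitted the key decoupling claim and the probability estimate both fail.
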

For the proof, the following two lemmas are crucial.
Let $\tau,\tau'$ be disjoint $d$-simplices in $\cK_n$, and let $F,F'\subset F_d(\cK_n)\setminus\{\tau,\tau'\}$ be fixed.
For $k\in\Z_{\ge0}$, define
\[
\bfB_k^{\tau,F}\coloneqq ((\X_n,\tau)_k,(\X_n^F,\tau)_k)
\quad\text{and}\quad
\bfB_k^{\tau',F'}\coloneqq ((\X_n,\tau')_k,(\X_n^{F'},\tau')_k).
\]
Recall the definition of $(\X_n,\tau)_k$ in Definition~\ref{df:M-ball}.  
By forgetting the weights of $\bfB_k^{\tau,F}$ and $\bfB_k^{\tau',F'}$, we also define $B_k^{\tau,F}$ and $B_k^{\tau',F'}$, respectively. In addition, let $I_k$ denote the event that two (weighted) simplicial complexes $(\X_n,\tau)_k\cup(\X_n^F,\tau)_k$ and $(\X_n,\tau')_k\cup(\X_n^{F'},\tau')_k$ have no common vertices. 
\begin{lem}\label{lem:nonintersect}
For $k \in \N$ with $k \leq n$, it holds that
\[
\P(I_k^c)\le\frac{Ck^2(1\vee  d\lm_n)^{2k}}n.
\]
\end{lem}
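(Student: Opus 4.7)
The plan is to dominate both composite complexes appearing in the definition of $I_k$ by $k$-neighbourhoods inside a common auxiliary random $d$-complex, and then convert the vertex-intersection probability into a path-counting estimate analogous to Proposition~\ref{prop:gm_est}. Let $\hat\X_n$ denote the random $d$-complex with i.i.d.\ Bernoulli indicators $\hat b_{n,\sigma}:=b_{n,\sigma}\vee b'_{n,\sigma}$, whose inclusion parameter is $q_n:=1-(1-p_n)^2\le 2p_n$. Since every $d$-simplex appearing in $(\X_n,\tau)_k$ or $(\X_n^F,\tau)_k$ satisfies $\hat b_{n,\sigma}=1$, the union $(\X_n,\tau)_k\cup(\X_n^F,\tau)_k$ is a subcomplex of $(\hat\X_n,\tau)_k$, and the analogous inclusion holds for the $\tau'$-centred complexes. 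Consequently $I_k^c\subseteq\{V((\hat\X_n,\tau)_k)\cap V((\hat\X_n,\tau')_k)\neq\emptyset\}$, and the factor-two passage from $p_n$ to $q_n$ can be absorbed into the $d$-dependent constant $C$.

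Next, by Markov's inequality it suffices to control $\sum_{v\in[n]}\P(v\in V((\hat\X_n,\tau)_k)\cap V((\hat\X_n,\tau')_k))$. For each $v$ I would decompose the intersection event according to whether the two witnessing paths (from $\tau$, resp.\ $\tau'$, to $d$-simplices containing $v$) share a $d$-simplex in $\hat\X_n$. In the \emph{sharing} case, concatenating the two paths at the common $d$-simplex and discarding cycles yields a bona fide path in $\hat\X_n$ of length at most $2k$ between a $(d-1)$-face of $\tau$ and a $(d-1)$-face of $\tau'$; the enumeration technique behind Proposition~\ref{prop:gm_est}, applied to $\hat\X_n$, bounds the probability of such a connection by $Ck^2(1\vee d\lm_n)^{2k}/n$. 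In the \emph{non-sharing} case, the two monotone events are witnessed on disjoint $d$-simplex sets, so the van den Berg--Kesten (Reimer) inequality gives
\[
\P(\text{disjoint witnesses for }v)\le\P(v\in V((\hat\X_n,\tau)_k))\,\P(v\in V((\hat\X_n,\tau')_k)),
\]
and each marginal is in turn bounded by directly enumerating paths of length at most $k$ from $\tau$ (resp.\ $\tau'$) whose vertex set contains $v$, using essentially the same combinatorial counting as in Proposition~\ref{prop:gm_est} together with an additional summation over the step at which $v$ is introduced into the path.

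Summing over $v\in[n]$, with the finitely many vertices $v\in\tau\cup\tau'$ treated separately (for such $v$ one of the marginal probabilities is trivially one, and there are only $O(1)$ of them), the two scenarios combine to yield the claimed bound $Ck^2(1\vee d\lm_n)^{2k}/n$. The main obstacle I anticipate is the second step: verifying that the monotone event ``$v\in V((\hat\X_n,\tau)_k)$'' admits a witness depending only on the $d$-simplices of a single path, so that Reimer's inequality applies cleanly, and controlling the combinatorial factors in the path enumeration tightly enough that neither the stochastic domination to $\hat\X_n$ nor the non-sharing contribution introduces spurious powers of $k$ or of $(1\vee d\lm_n)^k$ beyond what the lemma allows.
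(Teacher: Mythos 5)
There is a genuine gap, and it sits in your very first step. Passing to the dominating complex $\hat\X_n$ with parameter $q_n=1-(1-p_n)^2\le 2p_n$ replaces the per-simplex presence probability $p_n$ by (up to) $2p_n$ in every path you enumerate, so every bound you subsequently derive carries $(1\vee 2d\lm_n)^{2k}$ in place of $(1\vee d\lm_n)^{2k}$. This is \emph{not} absorbable into the constant $C$, which depends only on $d$: the discrepancy is a factor of order $4^k$ (e.g.\ already when $\lm_n=1/d$ the stated bound has $(1\vee d\lm_n)^{2k}=1$ while yours has $4^k$), and since the lemma is applied with $k$ growing with $n$ (Corollary~\ref{cor:suff_CLT}(3) takes $k=o(n^{1/5})$, and the local Betti number example takes $M_n=o(n^{1/17})$), an extra $4^k$ would destroy exactly the regimes the lemma is designed to cover. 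The paper avoids this entirely: it never collapses the two sources of randomness into one complex. It splits $I_k^c$ into the four pairwise vertex-intersection events such as $\{(\X_n^F,\tau)_k\cap(\X_n^{F'},\tau')_k\neq\{\emptyset\}\}$, observes that a common vertex forces the existence of two witnessing paths, one in $\X_n^F$ from a face of $\tau$ and one in $\X_n^{F'}$ from a face of $\tau'$, which share the vertex but no $d$-simplex; since the two paths then involve disjoint sets of coordinates among the independent pairs $(b_{n,\sg},b'_{n,\sg})$, their presences are exactly independent and each simplex is present with probability exactly $p_n$ — no BK/Reimer inequality and no doubling of the parameter are needed. If you want to keep your per-vertex decomposition you must likewise keep the two complexes separate (so the ``non-sharing'' case is handled by genuine independence rather than BK on $\hat\X_n$), or otherwise find a way to charge each path's simplices at rate $p_n$ rather than $q_n$.

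A secondary problem is the polynomial factor. Your marginal bound for $\P(v\in V((\hat\X_n,\tau)_k))$, obtained by ``an additional summation over the step at which $v$ is introduced,'' gives $Ck^2(1\vee d\lm_n)^k/n$ per marginal, hence $Ck^4(1\vee d\lm_n)^{2k}/n$ after multiplying the two marginals and summing over $v$ — weaker than the stated $k^2$. You flagged this worry but did not resolve it; the resolution is to note that if $v$ lies in the $k$-neighbourhood then one may choose a witnessing path whose \emph{last} $d$-simplex contains $v$, and then enumerate such paths directly: once $v$ has been introduced it must be retained, so each subsequent step has only $(d-1)n$ (rather than $dn$) choices, and for $d\ge2$ the sum over the introduction step is a convergent geometric series. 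This yields at most $C_d(dn)^{i-1}$ paths of length $i$, hence a marginal of order $k(1\vee d\lm_n)^k/n$ with no spurious factor of $k$, which is what produces the claimed $k^2$ overall. Finally, a presentational point: the ``sharing'' case (concatenation into a path of length at most $2k$ from $\tau$ to $\tau'$) must be bounded once, outside the sum over $v$; as written inside $\sum_{v\in[n]}$ it would cost an extra factor of $n$.
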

\begin{proof}
If the two simplicial complexes $(\X_n,\tau)_k\cup(\X_n^F,\tau)_k$ and $(\X_n,\tau')_k\cup(\X_n^{F'},\tau')_k$ have a common vertex, then at least one of the following four simplicial complexes is not the empty complex (i.e., has nonempty vertex set):
\begin{align*}
(\X_n,\tau)_k\cap(\X_n,\tau')_k
\text{, }(\X_n,\tau)_k\cap(\X_n^{F'},\tau')_k
\text{, }(\X_n^F,\tau)_k\cap(\X_n,\tau')_k
\text{, and }(\X_n^F,\tau)_k\cap(\X_n^{F'},\tau')_k.
\end{align*}
Hence, it suffices to estimate only the probability that $(\X_n^F,\tau)_k$ and $(\X_n^{F'},\tau')_k$ have a common vertex.
Now, we note that if $(\X_n^F,\tau)_k$ and $(\X_n^{F'},\tau')_k$ have a common vertex $v\in[n]\setminus(\tau\cup\tau')$, then there exist paths $w$ in $\X_n^F$ and $w'$ in $\X_n^{F'}$ of length at most $k$ such that $w$ and $w'$ possess $v$ in common but not any $d$-simplices, and contain some faces of $\tau$ and $\tau'$, respectively.
In addition, if $(\X_n^F,\tau)_k$ and $(\X_n^{F'},\tau')_k$ have a common vertex $v\in\tau'$ (resp. $v\in\tau$), then there exist a path $w$ in $\X_n^F$ (resp. $w'$ in $\X_n^{F'}$) of length at most $k$ such that $w$ (resp. $w'$) possesses $v$ and some face of $\tau$ (resp. $\tau'$).
Therefore, in a similar manner to~\eqref{eq:gm_est}, we can easily verify that
\[
\P((\X_n^F,\tau)_k\cap(\X_n^{F'},\tau')_k\neq\{\emptyset\})
\le\frac{Ck^2(1\vee  d\lm_n)^{2k}}n.\qedhere
\]
\end{proof}

\begin{lem}\label{lem:Cao_Lem6.8}
Let $d\ge2$.
Then, on the event $I_k$, $\bfB_{k+1}^{\tau,F}$ and $\bfB_{k+1}^{\tau',F'}$ are conditionally independent given $B_k^{\tau,F},B_k^{\tau',F'}$, i.e., for any bounded measurable functions $g_\tau$ and $g_{\tau'}$,
\begin{equation}\label{eq:condi_indep}
\1_{I_k}\cov(g_\tau(\bfB_{k+1}^{\tau,F}),g_{\tau'}(\bfB_{k+1}^{\tau',F'})\mid B_k^{\tau,F},B_k^{\tau',F'})=0.
\end{equation}
Furthermore,
\begin{align}
\1_{I_k}\E[g_\tau(\bfB_{k+1}^{\tau,F})\mid B_k^{\tau,F},B_k^{\tau',F'}]
&=\1_{I_k}\E[g_\tau(\bfB_{k+1}^{\tau,F})\mid B_k^{\tau,F}]\label{eq:rdc_condi_1}
\shortintertext{and}
\1_{I_k}\E[g_{\tau'}(\bfB_{k+1}^{\tau',F'})\mid B_k^{\tau,F},B_k^{\tau',F'}]
&=\1_{I_k}\E[g_{\tau'}(\bfB_{k+1}^{\tau',F'})\mid B_k^{\tau',F'}].\label{eq:rdc_condi_2}
\end{align}
\end{lem}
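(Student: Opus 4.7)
The linchpin is a simple combinatorial fact: if $d\ge2$, then on $I_k$, writing $V_\tau$ and $V_{\tau'}$ for the vertex sets of the simplicial complexes $(X_n,\tau)_k\cup(X_n^F,\tau)_k$ and $(X_n,\tau')_k\cup(X_n^{F'},\tau')_k$ respectively, no $d$-simplex $\sigma^*\in F_d(\cK_n)$ can have $d$ or more vertices in each of $V_\tau$ and $V_{\tau'}$. Indeed, $\sigma^*$ has only $d+1$ vertices, so two such inclusions would force at least $2d-(d+1)=d-1\ge1$ vertices in $V_\tau\cap V_{\tau'}$, contradicting $V_\tau \cap V_{\tau'} = \emptyset$ on $I_k$.

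Next I would localize the randomness. Set $S_\tau:=\{\sigma\in F_d(\cK_n):|\sigma\cap V_\tau|\ge d\}$ and $S_{\tau'}$ analogously. Any $d$-simplex potentially added at the $(k+1)$-th step of an exploration starting from $\tau$ is of the form $\rho\cup\sigma_{k+1}$ with $\rho$ already a $(d-1)$-simplex in the current $k$-ball, so has $d$ vertices in $V_\tau$ and belongs to $S_\tau$. By induction on $k$, a BFS-style exploration then shows that $B_k^{\tau,F}$, $\bfB_k^{\tau,F}$, and the one-step enlargement $\bfB_{k+1}^{\tau,F}$ are all measurable functions of the random data $\mathcal{D}_\tau:=\{(b_{n,\sigma},w_{n,\sigma},b'_{n,\sigma},w'_{n,\sigma}):\sigma\in S_\tau\}$; the symmetric statement holds for $\mathcal{D}_{\tau'}$. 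The combinatorial observation above gives $S_\tau\cap S_{\tau'}=\emptyset$ on $I_k$, so $\mathcal{D}_\tau$ and $\mathcal{D}_{\tau'}$ are disjoint subfamilies of the mutually independent collection $\{(b_{n,\sigma},w_{n,\sigma},b'_{n,\sigma},w'_{n,\sigma})\}_{\sigma\in F_d(\cK_n)}$, hence are independent.

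Since $I_k$ is $\sigma(B_k^{\tau,F},B_k^{\tau',F'})$-measurable and the pair $(B_k^{\tau,F},\bfB_{k+1}^{\tau,F})$ is $\sigma(\mathcal{D}_\tau)$-measurable (and likewise for the primed pair), the conditional joint law of $(\bfB_{k+1}^{\tau,F},\bfB_{k+1}^{\tau',F'})$ given $(B_k^{\tau,F},B_k^{\tau',F'})$ factorizes on $I_k$, yielding \eqref{eq:condi_indep}. The same reasoning shows that on $I_k$, $\bfB_{k+1}^{\tau,F}$ is conditionally independent of $B_k^{\tau',F'}$ given $B_k^{\tau,F}$, which gives \eqref{eq:rdc_condi_1}, and \eqref{eq:rdc_condi_2} follows by symmetry. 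The main technical subtlety will be to make the BFS exploration rigorous: the queried index set $Q_\tau$ is itself random and its membership depends on the exploration history, so one must check inductively that every $d$-simplex examined during the BFS has $d$ of its vertices in the currently revealed portion of $V_\tau$, ensuring $Q_\tau\subset S_\tau$ and that the family $\mathcal{D}_\tau$ is self-contained under the exploration.
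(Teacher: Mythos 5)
Your strategy is the same as the paper's: localize the randomness of the one-step enlargement to the $d$-simplices having $d$ vertices in the vertex set of the realized $k$-ball, and use $d\ge 2$ together with vertex-disjointness on $I_k$ to make the two relevant index families disjoint (the paper's family is $C(B_1\cup B_2)=\{\sg\cup\{v\}\colon\sg\in F_{d-1}(B_1\cup B_2),\,v\notin\sg\}$, a subset of your $S_\tau$). However, the pivotal inference as you state it --- ``$\mathcal{D}_\tau$ and $\mathcal{D}_{\tau'}$ are disjoint subfamilies of the mutually independent collection, hence are independent'' --- is a genuine gap: $S_\tau$ and $S_{\tau'}$ are \emph{random} index sets (they are functions of the very variables they index), and disjoint randomly-selected subfamilies of an independent collection are not independent in general; likewise ``$\sigma(\mathcal{D}_\tau)$-measurable'' is not a well-defined notion for a random family. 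Your final paragraph flags a ``technical subtlety'' about the BFS being self-contained, but that self-containedness is not a side issue to be checked afterwards: it is the only mechanism by which the randomness of the index sets can be neutralized, and the factorization of the conditional law does not follow from disjointness alone.

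The paper closes exactly this gap by decomposing $I_k$ into the atoms $\{B_k^{\tau,F}=(B_1,B_2)\}\cap\{B_k^{\tau',F'}=(B'_1,B'_2)\}$, where $(B_1,B_2)$ and $(B'_1,B'_2)$ range over pairs of complexes containing $\tau$, resp.\ $\tau'$, with $B_1\cup B_2$ and $B'_1\cup B'_2$ vertex-disjoint. On each such atom the index sets $C(B_1\cup B_2)$ and $C(B'_1\cup B'_2)$ are \emph{deterministic}, and they are disjoint precisely because $d\ge2$ (a common $d$-simplex would need $d$ vertices in each of two disjoint vertex sets, forcing $2d\le d+1$). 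The two further facts that make the argument go through are measurability statements: (i) on the atom, $g_\tau(\bfB_{k+1}^{\tau,F})$ and $g_{\tau'}(\bfB_{k+1}^{\tau',F'})$ depend only on the data indexed by $C(B_1\cup B_2)$ and $C(B'_1\cup B'_2)$, respectively; and (ii) the conditioning events $\{B_k^{\tau,F}=(B_1,B_2)\}$ and $\{B_k^{\tau',F'}=(B'_1,B'_2)\}$ are themselves determined by those same two (independent) families. Point (ii) is what your sketch leaves implicit, and it is needed both for \eqref{eq:condi_indep} and, especially, for \eqref{eq:rdc_condi_1}--\eqref{eq:rdc_condi_2}, where one must integrate out the event involving the \emph{other} ball; it also handles the indicator $\1_{I_k}$ cleanly, since $I_k$ is exactly the union of these atoms. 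To repair your write-up, replace the appeal to independence of $\mathcal{D}_\tau,\mathcal{D}_{\tau'}$ by this atom-by-atom conditioning (or by an equivalent stopping-set argument in which (i) and (ii) are proved by the induction you sketch), and then sum over atoms.
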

\begin{proof}
The event $I_k$ is a disjoint union (over $(B_1,B_2),(B'_1,B'_2)$) of events $\{B_k^{\tau,F}=(B_1,B_2)\} \cap \{B_k^{\tau',F'}= (B'_1,B'_2)\}$, where $(B_1,B_2)$ and $(B'_1,B'_2)$ are pairs of simplicial complexes such that $B_1,B_2\ni\tau$, $B'_1,B'_2\ni\tau'$, and two simplicial complexes $B_1\cup B_2$ and $B'_1\cup B'_2$ have no common vertices. So it suffices to prove the above claims for $\1_{\{B_k^{\tau,F}= (B_1,B_2)\} \cap \{B_k^{\tau',F'}=(B'_1,B'_2)\}}$ instead of $\1_{I_k}$.

Given a subcomplex $X\subset\cK_n$, define
\[
C(X)\coloneqq\{(b_{n,\sg\cup\{v\}},w_{n,\sg\cup\{v\}},b'_{n,\sg\cup\{v\}},w'_{n,\sg\cup\{v\}})\colon\sg\in F_{d-1}(X),v\in [n]\setminus\sg\}.
\]
Fix two pairs of simplicial complexes $(B_1,B_2)$ and $(B'_1,B'_2)$ such that $B_1,B_2\ni\tau$, $B'_1,B'_2\ni\tau'$, and two simplicial complexes $B_1\cup B_2$ and $B'_1\cup B'_2$ have no common vertices.
Given the events $\{B_k^{\tau,F}= (B_1,B_2)\}$ and $\{B_k^{\tau',F'}= (B'_1,B'_2)\}$, the random variables $g_\tau(\bfB_{k+1}^{\tau,F})$ and $g_{\tau'}(\bfB_{k+1}^{\tau',F'})$ depend only on $C(B_1\cup B_2)$ and $C(B'_1\cup B'_2)$, respectively. Since $d\ge2$, $C(B_1\cup B_2)$ and $C(B'_1\cup B'_2)$ are independent, which with the above observation immediately implies~\eqref{eq:condi_indep}.
Noting also that the events $\{B_k^{\tau,F}=(B_1,B_2)\}$ and $\{B_k^{\tau',F'}=(B'_1,B'_2)\}$, respectively, depends only on $C(B_1\cup B_2)$ and $C(B'_1\cup B'_2)$, we can easily conclude~\eqref{eq:rdc_condi_1} and~\eqref{eq:rdc_condi_2}.
\end{proof}

\begin{proof}[Proof of Proposition~\ref{prop:tilrho_est}]
Let $\tau,\tau'$ be disjoint $d$-simplices in $\cK_n$, and let $F,F'\subset F_d(\cK_n)\setminus\{\tau,\tau'\}$ be fixed.
We write 
\[
g_\tau(\bfB_k^{\tau,F})=D_\tau f(B_k(\tau,\X_n))D_\tau f(B_k(\tau,\X_n^F))
\quad\text{and}\quad
g_{\tau'}(\bfB_k^{\tau',F'})=D_{\tau'} f(B_k(\tau',\X_n))D_{\tau'} f(B_k(\tau',\X_n^{F'})).
\]
From the Lipschitzness of $f$, we have
\[
|g_\tau(\bfB_k^{\tau,F})|\le H(w_{n,\tau},w'_{n,\tau})^2
\quad\text{and}\quad |g_{\tau'}(\bfB_k^{\tau',F'})|\le H(w_{n,\tau'},w'_{n,\tau'})^2.
\]
Therefore, using H\"older's inequality, conditional covariance formula, and Lemma~\ref{lem:Cao_Lem6.8}, we have
\begin{align*}
&\cov(g_\tau(\bfB_k^{\tau,F}),g_{\tau'}(\bfB_k^{\tau',F'}))\\
&\le\cov(\1_{I_{k-1}}g_\tau(\bfB_k^{\tau,F}),\1_{I_{k-1}}g_{\tau'}(\bfB_k^{\tau',F'}))+CJ_n^{2/3}\P(I_{k-1}^c)^{1/3}\\
&=\E[\1_{I_{k-1}}\cov(g_\tau(\bfB_k^{\tau,F}),g_{\tau'}(\bfB_k^{\tau',F'})\mid B_{k-1}^{\tau,F},B_{k-1}^{\tau',F'})]\\
&\qad+\cov(\1_{I_{k-1}}\E[g_\tau(\bfB_k^{\tau,F})\mid B_{k-1}^{\tau,F},B_{k-1}^{\tau',F'}],\1_{I_{k-1}}\E[g_{\tau'}(\bfB_k^{\tau',F'})\mid B_{k-1}^{\tau,F},B_{k-1}^{\tau',F'}])+CJ_n^{2/3}\P(I_{k-1}^c)^{1/3}\\
&=\cov(\1_{I_{k-1}}\E[g_\tau(\bfB_k^{\tau,F})\mid B_{k-1}^{\tau,F}],\1_{I_{k-1}}\E[g_{\tau'}(\bfB_k^{\tau',F'})\mid B_{k-1}^{\tau',F'}])+CJ_n^{2/3}\P(I_{k-1}^c)^{1/3}\\
&\le\cov(\E[g_\tau(\bfB_k^{\tau,F})\mid B_{k-1}^{\tau,F}],\E[g_{\tau'}(\bfB_k^{\tau',F'})\mid B_{k-1}^{\tau',F'}])+CJ_n^{2/3}\P(I_{k-1}^c)^{1/3},
\end{align*}
where in the last line, we also used the estimate
\[
|\E[g_\tau(\bfB_k^{\tau,F})\mid B_{k-1}^{\tau,F}]|
\le\E[H(w_{n,\tau},w'_{n,\tau})^2\mid B_{k-1}^{\tau,F}]
=\E[H(w_{n,\tau},w'_{n,\tau})^2]
\le J_n^{1/3}
\]
and similarly $|\E[g_{\tau'}(\bfB_k^{\tau',F'})\mid B_{k-1}^{\tau',F'}]|\le J_n^{1/3}$.
Note that $\E[g_\tau(\bfB_k^{\tau,F})\mid B_{k-1}^{\tau,F}]$ and $\E[g_{\tau'}(\bfB_k^{\tau',F'})\mid B_{k-1}^{\tau',F'}]$ in the last line are respectively functions of $B_{k-1}^{\tau,F}$ and $B_{k-1}^{\tau',F'}$, especially $\bfB_{k-1}^{\tau,F}$ and $\bfB_{k-1}^{\tau',F'}$.
Therefore, by iterating the above estimate and using Lemma~\ref{lem:nonintersect}, we obtain
\begin{align*}
\cov(g_\tau(\bfB_k^{\tau,F}),g_{\tau'}(\bfB_k^{\tau',F'}))
\le CJ_n^{2/3}\sum_{i=1}^{k-1}\P(I_i^c)^{1/3}
&\le CJ_n^{2/3}\sum_{i=1}^{k-1}\biggl(\frac{i^2(1\vee  d\lm_n)^{2i}}n\biggr)^{1/3}\\
&\le CJ_n^{2/3}\biggl(\frac{k^5(1\vee  d\lm_n)^{2k}}n\biggr)^{1/3}.
\end{align*}
\end{proof}

Now, we are ready to prove Corollaries~\ref{cor:no_gm_rho} and \ref{cor:suff_CLT}.
\begin{proof}[Proof of Corollary~\ref{cor:no_gm_rho}]
Combining Theorem~\ref{thm:add-one} with Propositions~\ref{prop:gm_est} and~\ref{prop:tilrho_est}, we have
\begin{align*}
&d_K\biggl(\cL\biggl(\frac{f(\X_n)-\E[f(\X_n)]}{\sg_n}\biggr),\cN(0,1)\biggr)\nonumber\\
&\le CJ_n^{1/6}\biggl(\frac{n^d}{\sg_n^2}\biggr)^{1/2}\biggl[\biggl\{\tilde\dl_{k,n}^{1/2}+\biggl(\frac{k^5(1\vee d\lm_n)^{2k}}n\biggr)^{1/3}+\biggl(\frac{k^2(1\vee d\lm_n)^k}n\biggr)^{1/3}\biggr\}\lm_n^2+\frac{\lm_n^2}n+\frac{\lm_n}{n^d}+\frac{\lm_n^3}n\biggr]^{1/4}\\
&\qad+\biggl(\frac{n^d}{\sg_n^2}\biggr)^{3/4}\frac{J_n^{1/4}\lm_n^{1/2}}{n^{d/4}}.
\end{align*}
Noting that
\begin{align*}
&\biggl\{\tilde\dl_{k,n}^{1/2}+\biggl(\frac{k^5(1\vee d\lm_n)^{2k}}n\biggr)^{1/3}+\biggl(\frac{k^2(1\vee d\lm_n)^k}n\biggr)^{1/3}\biggr\}\lm_n^2+\frac{\lm_n^2}n+\frac{\lm_n}{n^d}+\frac{\lm_n^3}n\\
&\le\biggl\{\tilde\dl_{k,n}^{1/2}+\biggl(\frac{k^5(1\vee d\lm_n)^{2k}}n\biggr)^{1/3}+\biggl(\frac{k^2(1\vee d\lm_n)^k}n\biggr)^{1/3}+\frac1n+\frac1{n^d}+\frac{1\vee\lm_n}n\biggr\}(1\vee\lm_n)^2\\
&\le C\biggl\{\tilde\dl_{k,n}^{1/2}+\biggl(\frac{k^5(1\vee  d\lm_n)^{2k}}n\biggr)^{1/3}\biggr\}(1\vee\lm_n)^2,
\end{align*}
we immediately obtain~\eqref{eq:no_gm_rho}.
\end{proof}

\begin{proof}[Proof of Corollary~\ref{cor:suff_CLT}]
From Corollary \ref{cor:no_gm_rho}, the conclusion follows immediately in the first case. Under the assumptions of the second or third case, one can easily verify that
\[
\lim_{n\to\infty}\frac{k(n)^5(1\vee d\lm_n)^{2k(n)}}n=0
\]
and this completes the proof using Corollary \ref{cor:no_gm_rho}.
\end{proof}

\subsection{Proof of Lemma~\ref{lem:disjoint}}
\label{sec:prooflemmadisjoint}
In this subsection, we prove Lemma~\ref{lem:disjoint}.
Recall that for $\tau\in F_d(\cK_n)$ and $F\subset F_d(\cK_n)\setminus\{\tau\}$, we define
\[
L_k^\tau(\X_n^F)\coloneqq\Dl_\tau f(B_k(\tau,\X_n^F)).
\]
Additionally, we define
\[
R_k^\tau(\X_n^F)\coloneqq\Dl_\tau f(\X_n^F)-L_k^\tau(\X_n^F).
\]
Here, $L$ and $R$ stand for ``local'' and ``remainder'', respectively. Recall the notations $A_{n,\tau},\tilde A_{n,\tau}$ and $\bfb_{n,\tau}$ from \eqref{e:antau} and \eqref{e:boldbntau}. From the Lipschitzness of $f$, we have
\[
|L_k^\tau(\X_n^F)|\le\1_{\tilde A_{n,\tau}}H(w_{n,\tau},w'_{n,\tau})
\quad\text{and}\quad
|R_k^\tau(\X_n^F)|\le|\Dl_\tau f(\X_n^F)|+|L_k^\tau(\X_n^F)|\le2\1_{\tilde A_{n,\tau}}H(w_{n,\tau},w'_{n,\tau}).
\]
In what follows, let $\tau,\tau'$ be fixed as disjoint $d$-simplices in $\cK_n$, and let $F\subset F_d(\cK_n)\setminus\{\tau\}$ and $F'\subset F_d(\cK_n)\setminus\{\tau'\}$.
We first decompose $\cov(\Dl_\tau f(\X_n)\Dl_\tau f(\X_n^F),\Dl_{\tau'} f(\X_n)\Dl_{\tau'} f(\X_n^{F'}))$ into $16$ terms:
\begin{align}\label{eq:16terms}
\cov(\Dl_\tau f(\X_n)\Dl_\tau f(\X_n^F),\Dl_{\tau'} f(\X_n)\Dl_{\tau'} f(\X_n^{F'}))
&=\cov(L_k^\tau(\X_n)L_k^\tau(\X_n^F),L_k^{\tau'}(\X_n)L_k^{\tau'}(\X_n^{F'}))\nonumber\\
&\qad+\cov(R_k^\tau(\X_n)L_k^\tau(\X_n^F),L_k^{\tau'}(\X_n)L_k^{\tau'}(\X_n^{F'}))\nonumber\\
&\qad+\cov(L_k^\tau(\X_n)R_k^\tau(\X_n^F),L_k^{\tau'}(\X_n)L_k^{\tau'}(\X_n^{F'}))\nonumber\\
&\qad+\cdots\nonumber\\
&\qad+\cov(R_k^\tau(\X_n)R_k^\tau(\X_n^F),R_k^{\tau'}(\X_n)R_k^{\tau'}(\X_n^{F'})).
\end{align}
The first term in the right-hand side of~\eqref{eq:16terms} consists of only local factors, and the other 15 terms include at least one remainder factor.
We will estimate the first term and the other 15 terms in different ways.
In the following, we write $H_n\coloneqq H(w_{n,\tau},w'_{n,\tau})+ H(w_{n,\tau'},w'_{n,\tau'})$ for convenience.
Note that $\E[H_n^6]\le CJ_n$.
We first estimate the latter 15 terms in the right-hand side of~\eqref{eq:16terms}.
\begin{lem}\label{lem:covR}
Each of the last 15 terms in the right-hand side of~\eqref{eq:16terms} is bounded above by
\[
CJ_n^{1/2}p_n^2\dl_{k,n}^{1/2} +CJ_n^{2/3}p_n^3.
\]
\end{lem}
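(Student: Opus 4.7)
The plan is to handle each of the 15 covariance terms via $|\cov(X,Y)| \le \E[|XY|] + \E[|X|]\E[|Y|]$. Each of $X, Y$ is a product of two factors of type $L_k^\tau, R_k^\tau$ (on the $\tau$-side) or $L_k^{\tau'}, R_k^{\tau'}$ (on the $\tau'$-side), with at least one $R$-factor overall. The three structural facts I will use throughout are: (i) the Lipschitz bounds $|L_k^\tau(\cdot)|, |R_k^\tau(\cdot)| \le C\mathbf{1}_{\tilde A_{n,\tau}}H_\tau$ where $H_\tau \coloneqq H(w_{n,\tau},w'_{n,\tau})$, and analogously for $\tau'$; (ii) the decomposition $\tilde A_{n,\tau}= A_{n,\tau}\sqcup\{b_{n,\tau}=b'_{n,\tau}=1\}$, with $\P(A_{n,\tau})\le2p_n$ and $\P(\{b_{n,\tau}=b'_{n,\tau}=1\})=p_n^2$; and (iii) the independence of the $\tau$-coordinates from the $\tau'$-coordinates of $(B_n,W_n,B'_n,W'_n)$ (since $\tau\ne\tau'$).

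The crucial idea --- needed to obtain the sharp $p_n^2$ in front of $\dl_{k,n}^{1/2}$ rather than a naive $p_n^{3/2}$ --- is to decompose each of the means/joint moments over the product of sub-events $B_\tau\cap B_{\tau'}$ with $B_\tau\in\{A_{n,\tau},\{b_{n,\tau}=b'_{n,\tau}=1\}\}$ and $B_{\tau'}\in\{A_{n,\tau'},\{b_{n,\tau'}=b'_{n,\tau'}=1\}\}$ \emph{before} applying Cauchy--Schwarz. Within each sub-event on the $\tau$-side: on $A_{n,\tau}$, an $R$-factor contributes $\E[R^2\mathbf{1}_{A_{n,\tau}}]^{1/2}\le(2p_n\dl_{k,n})^{1/2}$ by the defining property of $\dl_{k,n}$, while an $L$-factor contributes $\E[L^2\mathbf{1}_{A_{n,\tau}}]^{1/2}\le C(p_nJ_n^{1/3})^{1/2}$; on $\{b_{n,\tau}=b'_{n,\tau}=1\}$, every factor is bounded by $CH_\tau$ and the indicator carries probability $p_n^2$. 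For $\E[|XY|]$, the key step is that $R_k^\tau$ is independent of $b'_{n,\tau'}$, so for any event $E$ measurable with respect to the $\tau'$-coordinates one has
\[
\E[R^2\mathbf{1}_{A_{n,\tau}}\mathbf{1}_E]
=\P(E)\,\E[\mathbf{1}_{A_{n,\tau}}\E[R^2\mid b_{n,\tau},b'_{n,\tau},b_{n,\tau'}]]
\le\dl_{k,n}\P(A_{n,\tau})\P(E),
\]
after pulling the $\tau'$-independent indicator out via the tower property. The complementary Cauchy--Schwarz factor is a product of powers of $H_\tau$ and $H_{\tau'}$ weighted by the two sub-event indicators, which factorizes through independence of the $\tau$- and $\tau'$-coordinates and is controlled by H\"older using $\E[H^{2i}]\le J_n^{i/3}$ for $i\in\{1,2\}$.

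Summing over the (at most four) sub-events in each of $\E[|XY|]$ and $\E[|X|]\E[|Y|]$ yields per-term bounds of the form $Cp_n^2J_n^{1/2}\dl_{k,n}^{1/2}+Cp_n^3J_n^{2/3}$, after absorbing mixed-order terms using $p_n\le1$ together with the elementary bound $\dl_{k,n}\le C J_n^{1/3}$ that follows from the Lipschitz property (which upgrades stray $\dl_{k,n}$ factors to $\dl_{k,n}^{1/2}J_n^{1/6}$). The main obstacle is the indicator bookkeeping: the two factors of $p_n$ in the leading $\dl_{k,n}^{1/2}$-term must come from two \emph{different} support indicators $\mathbf{1}_{\tilde A_{n,\tau}}$ and $\mathbf{1}_{\tilde A_{n,\tau'}}$, one harnessed through the $\dl_{k,n}$-estimate on the $R$-side and the other preserved inside the independent $\tau'$-factor. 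The 15 cases are enumerated by the number and placement of the $R$-factors, and all follow the same template with the roles of $\tau$ and $\tau'$ possibly swapped.
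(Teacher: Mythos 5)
Your overall skeleton matches the paper's: bound each covariance by $\E[|XY|]+\E[|X|]\E[|Y|]$, use the Lipschitz domination $|L_k^\tau|,|R_k^\tau|\le 2\1_{\tilde A_{n,\tau}}H(w_{n,\tau},w'_{n,\tau})$, split $\tilde A_{n,\tau}=A_{n,\tau}\sqcup\{b_{n,\tau}=b'_{n,\tau}=1\}$ to trade the ``both resampled and original present'' event for an extra factor $p_n^2$, and extract $\dl_{k,n}^{1/2}$ from one $R$-factor via (conditional) Cauchy--Schwarz and H\"older. However, the step you single out as ``the key step'' contains a genuine error, and it sits exactly where the real work of the lemma lies. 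The displayed identity $\E[R^2\1_{A_{n,\tau}}\1_E]=\P(E)\,\E[\1_{A_{n,\tau}}\E[R^2\mid b_{n,\tau},b'_{n,\tau},b_{n,\tau'}]]$ is false: with $R=R_k^\tau(\X_n^F)$, the variable $R$ is \emph{not} independent of the $\tau'$-coordinates. If $\tau'\notin F$ it depends on $b_{n,\tau'}$ (and $w_{n,\tau'}$), and if $\tau'\in F$ it depends on $b'_{n,\tau'}$ (and $w'_{n,\tau'}$); your blanket claim ``$R_k^\tau$ is independent of $b'_{n,\tau'}$'' fails in the second case, and in either case the indicator $\1_E$ (for $E=\tilde A_{n,\tau'}$ or its sub-events) is correlated with $R^2$, so $\P(E)$ cannot be pulled out. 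Relatedly, you apply ``the defining property of $\dl_{k,n}$'' directly to $R_k^\tau(\X_n^F)$, but $\dl_{k,n}$ is defined as a bound on $\E[R_k^\tau(\X_n)^2\mid b_{n,\tau},b'_{n,\tau},b_{n,\tau'}]$ on $A_{n,\tau}$, i.e.\ for $F=\emptyset$ and with conditioning on $b_{n,\tau'}$ specifically. Transferring this to general $F$ is precisely the point the paper has to argue: one keeps the conditioning on $(\bfb_{n,\tau},\bfb_{n,\tau'})$, observes that when $\tau'\notin F$ the conditional second moment reduces to the one in the definition (see \eqref{e:covlocrembd2}), and when $\tau'\in F$ one uses the equality in distribution of the resampled complex, matching the conditioning on $b'_{n,\tau'}$ for $\X_n^F$ with the conditioning on $b_{n,\tau'}$ for $\X_n$ (see \eqref{e:covlocrembd3}). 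Only after this uniform domination $\E[R_k^\tau(\X_n^F)^2\mid\bfb_{n,\tau},\bfb_{n,\tau'}]\le\dl_{k,n}$ on $A_{n,\tau}$ does the expectation of the remaining indicators factor as $\P(A_{n,\tau})\P(E)$ (independence of the two events, not of $R$ from the $\tau'$-coordinates). Your argument skips this case analysis, and the independence assertion it relies on would, if true, make the conditioning on $b_{n,\tau'}$ in the definition of $\dl_{k,n}$ pointless --- a sign that the dependence structure being negotiated here has been missed.

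Two smaller points. First, for $\E[|XY|]$ you cannot ``factorize through independence of the $\tau$- and $\tau'$-coordinates'' wholesale: all four factors $L_k^\tau,R_k^\tau,L_k^{\tau'},R_k^{\tau'}$ depend on the entire complex. The factorization is legitimate only after you have replaced all but one factor by functions of $(w_{n,\tau},w'_{n,\tau},w_{n,\tau'},w'_{n,\tau'})$ and the Bernoullis (via Lipschitzness) and handled the surviving $R$-factor through its conditional second moment, which is how the paper proceeds via conditional Cauchy--Schwarz given $(\bfb_{n,\tau},\bfb_{n,\tau'})$. Second, your use of $\dl_{k,n}\le CJ_n^{1/3}$ to upgrade terms carrying a full power $\dl_{k,n}$ (arising when both $X$ and $Y$ contain $R$-factors) is not automatic from the statement, since $\dl_{k,n}$ is merely \emph{some} dominating constant; it is harmless because one may replace $\dl_{k,n}$ by $\min(\dl_{k,n},4J_n^{1/3})$, or avoid the issue altogether as the paper does by exploiting only one $R$-factor per term and bounding the rest crudely by $H$'s.
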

\begin{proof}
We will estimate only the term of the form $\cov(L_k^\tau(\X_n)R_k^\tau(\X_n^F),L_k^{\tau'}(\X_n)L_k^{\tau'}(\X_n^{F'}))$.
The other terms can be estimated in almost the same manner.
From the Lipschitzness of $f$, we have
\begin{align*}
|L_k^\tau(\X_n)R_k^\tau(\X_n^F)L_k^{\tau'}(\X_n)L_k^{\tau'}(\X_n^{F'})|
&\le\1_{\tilde A_{n,\tau}}\1_{\tilde A_{n,\tau'}}H_n^3|R_k^\tau(\X_n^F)|\\
&\le\1_{A_{n,\tau}}\1_{\tilde A_{n,\tau'}}H_n^3|R_k^\tau(\X_n^F)|+2\1_{\{b_{n,\tau}=b'_{n,\tau}=1\}}\1_{\tilde A_{n,\tau'}}H_n^4.
\end{align*}
Therefore,
\begin{align}\label{eq:covR_1}
\E[|L_k^\tau(\X_n)R_k^\tau(\X_n^F)L_k^{\tau'}(\X_n)L_k^{\tau'}(\X_n^{F'})|]
&\le\E[\1_{A_{n,\tau}}\1_{\tilde A_{n,\tau'}}H_n^3|R_k^\tau(\X_n^F)|]+4p_n^3\E[H_n^4]\nonumber\\
&\le\E[\1_{A_{n,\tau}}\1_{\tilde A_{n,\tau'}}H_n^3|R_k^\tau(\X_n^F)|]+CJ_n^{2/3}p_n^3.
\end{align}
In the same manner, we have
\begin{equation}\label{eq:covR_2}
\E[|L_k^\tau(\X_n)R_k^\tau(\X_n^F)|]\E[|L_k^{\tau'}(\X_n)L_k^{\tau'}(\X_n^{F'})|]\\
\le CJ_n^{1/3}p_n\E[\1_{A_{n,\tau}}H_n|R_k^\tau(\X_n^F)|]+CJ_n^{2/3}p_n^3.
\end{equation}
Combining~\eqref{eq:covR_1} and~\eqref{eq:covR_2},
\begin{align}
&\cov(L_k^\tau(\X_n)R_k^\tau(\X_n^F),L_k^{\tau'}(\X_n)L_k^{\tau'}(\X_n^{F'})) \nonumber \\
&\le\E[\1_{A_{n,\tau}}\1_{\tilde A_{n,\tau'}}H_n^3|R_k^\tau(\X_n^F)|] +CJ_n^{1/3}p_n\E[\1_{A_{n,\tau}}H_n|R_k^\tau(\X_n^F)|]+CJ_n^{2/3}p_n^3 \nonumber \\
&\le\E[\1_{A_{n,\tau}}\1_{\tilde A_{n,\tau'}}\E[H_n^3|R_k^\tau(\X_n^F)|\mid\bfb_{n,\tau},\bfb_{n,\tau'}]]+CJ_n^{1/3}p_n\E[\1_{A_{n,\tau}}\E[H_n|R_k^\tau(\X_n^F)|\mid\bfb_{n,\tau},\bfb_{n,\tau'}]] +CJ_n^{2/3}p_n^3 \nonumber \\
\label{e:covlocrembd1} &\le CJ_n^{1/2}(\E[\1_{A_{n,\tau}}\1_{\tilde A_{n,\tau'}}\E[R_k^\tau(\X_n^F)^2\mid\bfb_{n,\tau},\bfb_{n,\tau'}]^{1/2}]+p_n\E[\1_{A_{n,\tau}}\E[R_k^\tau(\X_n^F)^2\mid\bfb_{n,\tau},\bfb_{n,\tau'}]^{1/2}])+CJ_n^{2/3}p_n^3,
\end{align}
where in the third inequality, we use the conditional Cauchy--Schwarz inequality, the mutual independence of $B_n$, $W_n$, $B'_n$, and $W'_n$, and H\"older's inequality.
Since $\tau\notin F$, if $\tau'\notin F$, then
\begin{equation}
\label{e:covlocrembd2}
\1_{A_{n,\tau}}\E[R_k^\tau(\X_n^F)^2\mid\bfb_{n,\tau},\bfb_{n,\tau'}]
=\1_{A_{n,\tau}}\E[R_k^\tau(\X_n^F)^2\mid\bfb_{n,\tau},b_{n,\tau'}]
=\1_{A_{n,\tau}}\E[R_k^\tau(\X_n)^2\mid\bfb_{n,\tau},b_{n,\tau'}]
\le\1_{A_{n,\tau}}\dl_{k,n}
\end{equation}
from the assumption.
If $\tau'\in F$, then it also follows from the assumption that
\begin{align}
&\1_{A_{n,\tau}}\E[R_k^\tau(\X_n^F)^2\mid\bfb_{n,\tau},\bfb_{n,\tau'}] \nonumber \\
&=\1_{A_{n,\tau}}\E[R_k^\tau(\X_n^F)^2\mid\bfb_{n,\tau},b'_{n,\tau'}] \nonumber \\
&=\sum_{\substack{(i_1,i_2)\in\{0,1\}^2\\i_1+i_2=1}}\sum_{i_3\in\{0,1\}}\E[R_k^\tau(\X_n^F)^2\mid\bfb_{n,\tau}=(i_1,i_2),b'_{n,\tau'}=i_3]\1_{\{\bfb_{n,\tau}=(i_1,i_2),b'_{n,\tau'}=i_3\}} \nonumber \\
&=\sum_{\substack{(i_1,i_2)\in\{0,1\}^2\\i_1+i_2=1}}\sum_{i_3\in\{0,1\}}\E[R_k^\tau(\X_n)^2\mid\bfb_{n,\tau}=(i_1,i_2),b_{n,\tau'}=i_3]\1_{\{\bfb_{n,\tau}=(i_1,i_2),b'_{n,\tau'}=i_3\}} \nonumber \\
\label{e:covlocrembd3} &\le\1_{A_{n,\tau}}\dl_{k,n}.
\end{align}
Substituting \eqref{e:covlocrembd2} and \eqref{e:covlocrembd3} into \eqref{e:covlocrembd1} completes the proof.
\end{proof}
Next, we turn to estimate the first term in the right-hand side of~\eqref{eq:16terms}.
Using conditional covariance formula, we can write
\begin{align}\label{eq:covL}
&\cov(L_k^\tau(\X_n)L_k^\tau(\X_n^F),L_k^{\tau'}(\X_n)L_k^{\tau'}(\X_n^{F'}))\nonumber\\
&=\cov(\1_{\tilde A_{n,\tau}}L_k^\tau(\X_n)L_k^\tau(\X_n^F),\1_{\tilde A_{n,\tau'}}L_k^{\tau'}(\X_n)L_k^{\tau'}(\X_n^{F'}))\nonumber\\
&=\cov(\1_{\tilde A_{n,\tau}}\E[L_k^\tau(\X_n)L_k^\tau(\X_n^F)\mid\bfb_{n,\tau},\bfb_{n,\tau'}],\1_{\tilde A_{n,\tau'}}\E[L_k^{\tau'}(\X_n)L_k^{\tau'}(\X_n^{F'})\mid\bfb_{n,\tau},\bfb_{n,\tau'}])\nonumber\\
&\qad+\E[\1_{\tilde A_{n,\tau}}\1_{\tilde A_{n,\tau'}}\cov(L_k^\tau(\X_n)L_k^\tau(\X_n^F),L_k^{\tau'}(\X_n)L_k^{\tau'}(\X_n^{F'})\mid\bfb_{n,\tau},\bfb_{n,\tau'})].
\end{align}
Since $|L_k^\tau(\X_n)L_k^\tau(\X_n^F)|\le\1_{\tilde A_{n,\tau}} H(w_{n,\tau},w'_{n,\tau})^2$ and $|L_k^{\tau'}(\X_n)L_k^{\tau'}(\X_n^{F'})|\le\1_{\tilde A_{n,\tau'}} H(w_{n,\tau'},w'_{n,\tau'})^2$, we immediately have
\[
\cov(L_k^\tau(\X_n)L_k^\tau(\X_n^F),L_k^{\tau'}(\X_n)L_k^{\tau'}(\X_n^{F'})\mid\bfb_{n,\tau},\bfb_{n,\tau'})
\le2J_n^{2/3}\1_{\tilde A_{n,\tau}}\1_{\tilde A_{n,\tau'}}.
\]
Therefore, by a straightforward calculation, the second term in the right-hand side of~\eqref{eq:covL} is estimated as follows:
\begin{align}\label{eq:covL_2nd}
&\E[\1_{\tilde A_{n,\tau}}\1_{\tilde A_{n,\tau'}}\cov(L_k^\tau(\X_n)L_k^\tau(\X_n^F),L_k^{\tau'}(\X_n)L_k^{\tau'}(\X_n^{F'})\mid\bfb_{n,\tau},\bfb_{n,\tau'})]\nonumber\\
&\le\E[\1_{A_{n,\tau}}\1_{A_{n,\tau'}}\cov(L_k^\tau(\X_n)L_k^\tau(\X_n^F),L_k^{\tau'}(\X_n)L_k^{\tau'}(\X_n^{F'})\mid\bfb_{n,\tau},\bfb_{n,\tau'})]+CJ_n^{2/3}p_n^3\nonumber\\
&\le Cp_n^2\rho_{k,n}+CJ_n^{2/3}p_n^3.
\end{align}
Here, we also used $\1_{A_{n,\tau}}\1_{A_{n,\tau'}}\cov(L_k^\tau(\X_n)L_k^\tau(\X_n^F),L_k^{\tau'}(\X_n)L_k^{\tau'}(\X_n^{F'})\mid\bfb_{n,\tau},\bfb_{n,\tau'}) \le\1_{A_{n,\tau}}\1_{A_{n,\tau'}}\rho_{k,n}$, which is the assumption in Theorem~\ref{thm:rdm_deriv}.
The following lemma provides a bound on the first term in the right-hand side of~\eqref{eq:covL}.
\begin{lem}\label{lem:covL_1st}
It holds that
\[
\cov(\1_{\tilde A_{n,\tau}}\E[L_k^\tau(\X_n)L_k^\tau(\X_n^F)\mid\bfb_{n,\tau},\bfb_{n,\tau'}],\1_{\tilde A_{n,\tau'}}\E[L_k^{\tau'}(\X_n)L_k^{\tau'}(\X_n^{F'})\mid\bfb_{n,\tau},\bfb_{n,\tau'}])\le CJ_n^{2/3}p_n^2\gm_{k,n}^{1/2}.
\]
\end{lem}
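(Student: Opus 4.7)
The plan is to introduce a separation event $E$ on which the $k$-neighborhoods relevant for $\tau$ and those relevant for $\tau'$ do not interact, and to exploit this to decorrelate the two sides of the covariance. Specifically, I would take $E$ to be the event that there is no path of length at most $k$ connecting any $(d-1)$-face of $\tau$ to any $(d-1)$-face of $\tau'$ in the auxiliary complex obtained from $\X_n$ by deleting $\tau$ and $\tau'$ (and simultaneously in the analogous complexes obtained from $\X_n^F$, $\X_n^{F\cup\{\tau\}}$, $\X_n^{F'}$, and $\X_n^{F'\cup\{\tau'\}}$). By construction, $E$ is measurable with respect to $\sigma(\{(b_{n,\sigma},w_{n,\sigma},b'_{n,\sigma},w'_{n,\sigma}):\sigma\ne\tau,\tau'\})$, hence independent of $\bfb_{n,\tau},\bfb_{n,\tau'}$, and a union bound over the $(d+1)^2$ pairs of $(d-1)$-faces and the finitely many auxiliary complexes yields $\P(E^c)\le C\gm_{k,n}$.

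Next I would decompose
\[
\cov(U,V) = \bigl(\E[UV\1_E]-\P(E)\E[U]\E[V]\bigr) + \bigl(\E[UV\1_{E^c}]-\P(E^c)\E[U]\E[V]\bigr).
\]
For the $E^c$-piece, the bounds $|U|\le CJ_n^{1/3}\1_{\tilde A_{n,\tau}}$, $|V|\le CJ_n^{1/3}\1_{\tilde A_{n,\tau'}}$ combined with the independence of $E^c$ from $\tilde A_{n,\tau}\cap\tilde A_{n,\tau'}$ and the product bound $\P(\tilde A_{n,\tau}\cap\tilde A_{n,\tau'})\le 4p_n^2$ immediately give a contribution of order $CJ_n^{2/3}p_n^2\gm_{k,n}$, which is dominated by $CJ_n^{2/3}p_n^2\gm_{k,n}^{1/2}$. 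For the $E$-piece, the crucial observation is that on $E$ the local factors $L_k^\tau(\X_n)L_k^\tau(\X_n^F)$ and $L_k^{\tau'}(\X_n)L_k^{\tau'}(\X_n^{F'})$ are measurable with respect to disjoint subfamilies of the i.i.d.\ collection $\{(b_{n,\sigma},w_{n,\sigma},b'_{n,\sigma},w'_{n,\sigma})\}_{\sigma\in F_d(\cK_n)}$, together with $\bfb_{n,\tau}$ for the first factor and $\bfb_{n,\tau'}$ for the second. This produces the factoring $\E[UV\1_E]=\P(E)\E[U\mid E]\E[V\mid E]$, after which the expansion $\E[U]=\P(E)\E[U\mid E]+\P(E^c)\E[U\mid E^c]$ and likewise for $V$ reduces the $E$-piece to a linear combination of products each carrying at least one factor of $\P(E^c)$, all bounded by a constant multiple of $J_n^{2/3}p_n^2\gm_{k,n}$ using $|\E[U\mid \cdot]|\le CJ_n^{1/3}p_n$ and similarly for $V$.

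The main obstacle will be to make the factoring on $E$ fully rigorous. The delicate point is that $E$ is itself defined in terms of the same ambient variables that enter $U$ and $V$, so the ``disjoint-dependence'' statement is not a statement about unconditional independence. I would handle this by partitioning $E$ into the sub-events $\{B_k(\tau,\X_n)\cup B_k(\tau,\X_n^F)\cup B_k(\tau,\X_n^{F\cup\{\tau\}})=A_\tau,\ B_k(\tau',\X_n)\cup B_k(\tau',\X_n^{F'})\cup B_k(\tau',\X_n^{F'\cup\{\tau'\}})=A_{\tau'}\}$ indexed by pairs $(A_\tau,A_{\tau'})$ of simplicial complexes sharing no $d$-simplex, and verifying that on each such sub-event the two local factors split as functions of disjoint coordinate blocks. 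If the resulting factoring only holds up to an error controlled by Cauchy--Schwarz with $\P(E^c)^{1/2}$, the square root is exactly what produces the exponent $1/2$ on $\gm_{k,n}$ in the final bound.
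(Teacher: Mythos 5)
There is a genuine gap at the heart of your argument: the factorization step on the event $E$ is not valid. The two random variables you are trying to decorrelate, $U=\1_{\tilde A_{n,\tau}}\E[L_k^\tau(\X_n)L_k^\tau(\X_n^F)\mid\bfb_{n,\tau},\bfb_{n,\tau'}]$ and $V=\1_{\tilde A_{n,\tau'}}\E[L_k^{\tau'}(\X_n)L_k^{\tau'}(\X_n^{F'})\mid\bfb_{n,\tau},\bfb_{n,\tau'}]$, are measurable with respect to $\sg(\bfb_{n,\tau},\bfb_{n,\tau'})$ alone: the conditional expectation has already integrated out every variable indexed by $F_d(\cK_n)\setminus\{\tau,\tau'\}$, i.e.\ exactly the variables that determine your separation event $E$. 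Since $E$ is independent of $(\bfb_{n,\tau},\bfb_{n,\tau'})$, it is independent of $(U,V)$, so $\E[UV\1_E]=\P(E)\E[UV]$ and $\E[U\mid E]=\E[U]$; your claimed identity $\E[UV\1_E]=\P(E)\E[U\mid E]\E[V\mid E]$ would therefore assert $\E[UV]=\E[U]\E[V]$, which is precisely the (false in general) statement the lemma is quantifying. The ``disjoint coordinate blocks'' observation is true for the inner local factors $L_k^\tau(\X_n)L_k^\tau(\X_n^F)$ and $L_k^{\tau'}(\X_n)L_k^{\tau'}(\X_n^{F'})$, but it does not transfer to their conditional expectations given only $\bfb_{n,\tau},\bfb_{n,\tau'}$; multiplying by $\1_E$ \emph{outside} the conditional expectation is powerless, and your proposed repair (partitioning $E$ by the realized neighbourhood pairs, as in Lemma~\ref{lem:Cao_Lem6.8}) again acts on the local factors, not on $U$ and $V$, so it does not rescue the step. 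Your $E^c$-piece and the probability estimate $\P(E^c)\le C\gm_{k,n}$ are fine, but they only account for a $\P(E^c)$-proportion of the covariance you have not actually bounded.

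The correct mechanism — and the one the paper uses — is to quantify the \emph{cross-dependence} of the conditional-expectation functions on the other simplex: compare $\E[L_k^\tau(\X_n)L_k^\tau(\X_n^F)\mid\bfb_{n,\tau},\bfb_{n,\tau'}]$ with the $\tau'$-deleted version $\E[L_k^\tau(\X_n-\tau')L_k^\tau(\X_n^F-\tau')\mid\bfb_{n,\tau}]$, and symmetrically for $\tau'$. The deleted versions are functions of $\bfb_{n,\tau}$ and $\bfb_{n,\tau'}$ respectively, hence exactly independent, so their covariance vanishes; the covariance in the lemma then telescopes into two cross terms, each carrying a replacement error. That error is controlled inside the conditional expectation by conditional Cauchy--Schwarz together with the bound $\P(L_k^\tau(\X_n)L_k^\tau(\X_n^F)\neq L_k^\tau(\X_n-\tau')L_k^\tau(\X_n^F-\tau')\mid\bfb_{n,\tau},\bfb_{n,\tau'})\le 2(d+1)^2\gm_{k,n}\1_{\tilde A_{n,\tau}}$ (the paper's~\eqref{eq:prob_LL-LL}), which is where the exponent $1/2$ on $\gm_{k,n}$ genuinely originates, while the indicators $\1_{\tilde A_{n,\tau}},\1_{\tilde A_{n,\tau'}}$ supply the $p_n^2$. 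Your separation-event idea can be salvaged only if you insert the indicator \emph{inside} the conditional expectation, e.g.\ split $\E[L_k^\tau(\X_n)L_k^\tau(\X_n^F)\1_E\mid\bfb_{n,\tau},\bfb_{n,\tau'}]+\E[\,\cdot\,\1_{E^c}\mid\cdots]$, show the first piece is a function of $\bfb_{n,\tau}$ alone and bound the second by $\1_{\tilde A_{n,\tau}}J_n^{1/3}\P(E^c)^{1/2}$ — but at that point you have reproduced the paper's deletion argument in different notation.
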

\begin{proof}
We define
\[
Z_k^{\tau,F}(\X_n)\coloneqq\E[L_k^\tau(\X_n)L_k^\tau(\X_n^F)\mid\bfb_{n,\tau},\bfb_{n,\tau'}]
\quad\text{and}\quad
Z_k^{\tau',F'}(\X_n)\coloneqq\E[L_k^{\tau'}(\X_n)L_k^{\tau'}(\X_n^{F'})\mid\bfb_{n,\tau},\bfb_{n,\tau'}].
\]
We also write
\begin{align*}
Z_k^{\tau,F}(\X_n-\tau')&\coloneqq\E[L_k^\tau(\X_n-\tau')L_k^\tau(\X_n^F-\tau')\mid\bfb_{n,\tau},\bfb_{n,\tau'}] =\E[L_k^\tau(\X_n-\tau')L_k^\tau(\X_n^F-\tau')\mid\bfb_{n,\tau}]
\shortintertext{and}
Z_k^{\tau',F'}(\X_n-\tau)
&\coloneqq\E[L_k^{\tau'}(\X_n-\tau)L_k^{\tau'}(\X_n^{F'}-\tau)\mid\bfb_{n,\tau},\bfb_{n,\tau'}]
=\E[L_k^{\tau'}(\X_n-\tau)L_k^{\tau'}(\X_n^{F'}-\tau)\mid\bfb_{n,\tau'}].
\end{align*}
Then, $Z_k^{\tau,F}(\X_n-\tau')$ and $Z_k^{\tau',F'}(\X_n-\tau)$ are independent.
Furthermore, since
\begin{equation}\label{eq:covL_1st_1}
|L_k^\tau(\X_n)L_k^\tau(\X_n^F)|,|L_k^\tau(\X_n-\tau')L_k^\tau(\X_n^F-\tau')|
\le\1_{\tilde A_{n,\tau}} H(w_{n,\tau},w'_{n,\tau})^2
\end{equation}
from the Lipschitzness of $f$, we have
\[
|Z_k^{\tau,F}(\X_n)|\text{, }|Z_k^{\tau,F}(\X_n-\tau')|
\le\1_{\tilde A_{n,\tau}}\E[H(w_{n,\tau},w'_{n,\tau})^2\mid\bfb_{n,\tau},\bfb_{n,\tau'}]
=\1_{\tilde A_{n,\tau}}\E[H(w_{n,\tau},w'_{n,\tau})^2] \le\1_{\tilde A_{n,\tau}}J_n^{1/3}.
\]
Similarly, $|Z_k^{\tau',F'}(\X_n)|$, $|Z_k^{\tau',F'}(\X_n-\tau)|\le\1_{\tilde A_{n,\tau'}}J_n^{1/3}$.
Noting that
\[
\cov(\1_{\tilde A_{n,\tau}}Z_k^{\tau,F}(\X_n-\tau'),\1_{\tilde A_{n,\tau'}}Z_k^{\tau',F'}(\X_n-\tau))=0,
\]
we can write
\begin{align}\label{eq:covL_1st_2}
&\cov(\1_{\tilde A_{n,\tau}}Z_k^{\tau,F}(\X_n),\1_{\tilde A_{n,\tau'}}Z_k^{\tau',F'}(\X_n))\nonumber\\
&=\cov(\1_{\tilde A_{n,\tau}}(Z_k^{\tau,F}(\X_n)-Z_k^{\tau,F}(\X_n-\tau')),\1_{\tilde A_{n,\tau'}}Z_k^{\tau',F'}(\X_n))\nonumber\\
&\qad+\cov(\1_{\tilde A_{n,\tau}}Z_k^{\tau,F}(\X_n-\tau'),\1_{\tilde A_{n,\tau'}}(Z_k^{\tau',F'}(\X_n)-Z_k^{\tau',F'}(\X_n-\tau))).
\end{align}
We will estimate the first term in the right-hand side of~\eqref{eq:covL_1st_2} as the other term is estimated in the same way.
Using $|Z_k^{\tau',F'}(\X_n)|\le\1_{\tilde A_{n,\tau'}}J_n^{1/3}$, we have
\begin{align*}
&\cov(\1_{\tilde A_{n,\tau}}(Z_k^{\tau,F}(\X_n)-Z_k^{\tau,F}(\X_n-\tau')),\1_{\tilde A_{n,\tau'}}Z_k^{\tau',F'}(\X_n))\\
&\le\E[\1_{\tilde A_{n,\tau}}|Z_k^{\tau,F}(\X_n)-Z_k^{\tau,F}(\X_n-\tau')|\1_{\tilde A_{n,\tau'}}|Z_k^{\tau',F'}(\X_n)|]\\
&\qad+\E[\1_{\tilde A_{n,\tau}}|Z_k^{\tau,F}(\X_n)-Z_k^{\tau,F}(\X_n-\tau')|]\E[\1_{\tilde A_{n,\tau'}}|Z_k^{\tau',F'}(\X_n)|]\\
&\le J_n^{1/3}\E[\1_{\tilde A_{n,\tau}}\1_{\tilde A_{n,\tau'}}|Z_k^{\tau,F}(\X_n)-Z_k^{\tau,F}(\X_n-\tau')|] +2J_n^{1/3}p_n\E[\1_{\tilde A_{n,\tau}}|Z_k^{\tau,F}(\X_n)-Z_k^{\tau,F}(\X_n-\tau')|]\\
&\le J_n^{1/3}\E[\1_{\tilde A_{n,\tau}}\1_{\tilde A_{n,\tau'}}\E[|L_k^\tau(\X_n)L_k^\tau(\X_n^F)-L_k^\tau(\X_n-\tau')L_k^\tau(\X_n^F-\tau')|\mid\bfb_{n,\tau},\bfb_{n,\tau'}]]\\
&\qad+2J_n^{1/3}p_n\E[\1_{\tilde A_{n,\tau}}\E[|L_k^\tau(\X_n)L_k^\tau(\X_n^F)-L_k^\tau(\X_n-\tau')L_k^\tau(\X_n^F-\tau')|\mid\bfb_{n,\tau},\bfb_{n,\tau'}]].
\end{align*}
We can estimate $\E[|L_k^\tau(\X_n)L_k^\tau(\X_n^F)-L_k^\tau(\X_n-\tau')L_k^\tau(\X_n^F-\tau')|\mid\bfb_{n,\tau},\bfb_{n,\tau'}]$ in the both terms in the right-hand side of the above equation as follows.
From~\eqref{eq:covL_1st_1},
\begin{align*}
&\E[|L_k^\tau(\X_n)L_k^\tau(\X_n^F)-L_k^\tau(\X_n-\tau')L_k^\tau(\X_n^F-\tau')|\mid\bfb_{n,\tau},\bfb_{n,\tau'}]\\
&\le2\1_{\tilde A_{n,\tau}}\E[H(w_{n,\tau},w'_{n,\tau})^2\1_{\{L_k^\tau(\X_n)L_k^\tau(\X_n^F)\neq L_k^\tau(\X_n-\tau')L_k^\tau(\X_n^F-\tau')\}}\mid\bfb_{n,\tau},\bfb_{n,\tau'}]\\
&\le2\1_{\tilde A_{n,\tau}}\E[H(w_{n,\tau},w'_{n,\tau})^4]^{1/2}\P(L_k^\tau(\X_n)L_k^\tau(\X_n^F)\neq L_k^\tau(\X_n-\tau')L_k^\tau(\X_n^F-\tau')\mid\bfb_{n,\tau},\bfb_{n,\tau'})^{1/2}\\
&\le2J_n^{1/3}\1_{\tilde A_{n,\tau}}\P(L_k^\tau(\X_n)L_k^\tau(\X_n^F)\neq L_k^\tau(\X_n-\tau')L_k^\tau(\X_n^F-\tau')\mid\bfb_{n,\tau},\bfb_{n,\tau'})^{1/2}.
\end{align*}
In the third line, we use the conditional Cauchy--Schwarz inequality.
As seen in~\eqref{eq:prob_LL-LL}, we have
\[
\P(L_k^\tau(\X_n)L_k^\tau(\X_n^F)\neq L_k^\tau(\X_n-\tau')L_k^\tau(\X_n^F-\tau')\mid\bfb_{n,\tau},\bfb_{n,\tau'})
\le2(d+1)^2\gm_{k,n}\1_{\tilde A_{n,\tau}}.
\]
Therefore, combining the above estimates yields
\[
\cov(\1_{\tilde A_{n,\tau}}(Z_k^{\tau,F}(\X_n)-Z_k^{\tau,F}(\X_n-\tau')),\1_{\tilde A_{n,\tau'}}Z_k^{\tau',F'}(\X_n))\le CJ_n^{2/3}p_n^2\gm_{k,n}^{1/2},
\]
which completes the proof.
\end{proof}

We are now ready to prove Lemma~\ref{lem:disjoint}.
\begin{proof}[Proof of Lemma~\ref{lem:disjoint}]
Substituting bounds from Lemma~\ref{lem:covL_1st} and~\eqref{eq:covL_2nd} in~\eqref{eq:covL}, we obtain
\[
\cov(L_k^\tau(\X_n)L_k^\tau(\X_n^F),L_k^{\tau'}(\X_n)L_k^{\tau'}(\X_n^{F'}))
\le CJ_n^{2/3}p_n^2\gm_{k,n}^{1/2}+Cp_n^2\rho_{k,n}+CJ_n^{2/3}p_n^3.
\]
Consequently, by~\eqref{eq:16terms} and Lemma~\ref{lem:covR},
\begin{align*}
&\cov(\Dl_\tau f(\X_n)\Dl_\tau f(\X_n^F),\Dl_{\tau'} f(\X_n)\Dl_{\tau'} f(\X_n^{F'}))\\
&\le\{CJ_n^{2/3}p_n^2\gm_{k,n}^{1/2}+Cp_n^2\rho_{k,n}+CJ_n^{2/3}p_n^3\}+\{CJ_n^{1/2}p_n^2\dl_{k,n}^{1/2} +CJ_n^{2/3}p_n^3\}\\
&=CJ_n^{2/3}p_n^2\gm_{k,n}^{1/2}+Cp_n^2\rho_{k,n}+CJ_n^{2/3}p_n^3+CJ_n^{1/2}p_n^2\dl_{k,n}^{1/2},
\end{align*}
which completes the proof.
\end{proof}

\section*{Acknowledgements}
SK was supported by a JSPS Grant-in-Aid for Scientific Research (A) (JP20H00119). DY was partially supported by SERB-MATRICS Grant MTR/2020/000470 and CPDA from the Indian Statistical Institute. We are extremely thankful to two anonymous referees for their careful reading and spotting an error in Proposition \ref{prop:gm_est}. 


\end{document}